\apptocmd{\sloppy}{\hbadness 10000\relax}{}{} 
\newcommand{\A}{{\mathbb A}}
\newcommand{\Q}{{\mathbb Q}}
\newcommand{\Z}{{\mathbb Z}}
\newcommand{\R}{{\mathbb R}}
\newcommand{\C}{{\mathbb C}}
\renewcommand{\H}{\mathcal{H}}
\newcommand{\HH}{\mathbb{H}}
\newcommand{\p}{\mathfrak p}
\newcommand{\OF}{{\mathfrak o}}
\newcommand{\GL}{{\rm GL}}
\newcommand{\PGL}{{\rm PGL}}
\newcommand{\SL}{{\rm SL}}
\newcommand{\SO}{{\rm SO}}
\newcommand{\sgn}{{\rm sgn}}
\newcommand{\St}{{\rm St}}
\newcommand{\triv}{1}
\newcommand{\M}[1]{{\rm M}(\p^{#1})}
\newcommand{\mat}[4]{{\setlength{\arraycolsep}{0.5mm}\left[\begin{smallmatrix}#1&#2\\#3&#4\end{smallmatrix}\right]}}
\newcommand{\matb}[4]{{\setlength{\arraycolsep}{0.5mm}\big[\begin{smallmatrix}#1&#2\\#3&#4\end{smallmatrix}\big]}}
\newcommand{\forget}[1]{}
\def\qdots{\mathinner{\mkern1mu\raise0pt\vbox{\kern7pt\hbox{.}}\mkern2mu
\raise3.4pt\hbox{.}\mkern2mu\raise7pt\hbox{.}\mkern1mu}}
\newtheorem{lemma}{Lemma}[section]
\newtheorem{theorem}[lemma]{Theorem}
\newtheorem{corollary}[lemma]{Corollary}
\newtheorem{proposition}[lemma]{Proposition}
\newtheorem{remark}[lemma]{Remark}
\newcommand\blfootnote[1]{%
	\begingroup
	\renewcommand\thefootnote{}\footnote{#1}%
	\addtocounter{footnote}{-1}%
	\endgroup
}
\newcommand\appendix@section[1]{%
	\refstepcounter{section}%
	\orig@section*{Appendix \@Alph\c@section: #1}%
	\addcontentsline{toc}{section}{Appendix \@Alph\c@section: #1}%
}
\g@addto@macro\appendix{\let\section\appendix@section}
\let\orig@section\section
\def\thickhline{%
  \noalign{\ifnum0=`}\fi\hrule \@height \thickarrayrulewidth \futurelet
   \reserved@a\@xthickhline}
\def\@xthickhline{\ifx\reserved@a\thickhline
               \vskip\doublerulesep
               \vskip-\thickarrayrulewidth
             \fi
      \ifnum0=`{\fi}}
\title{Classical and adelic Eisenstein series}
\author{Manami Roy, Ralf Schmidt, and Shaoyun Yi}
\date{}
\begin{document}
	
\maketitle
\blfootnote{2020 Mathematics Subject Classification: Primary 11F11, 11F37, 11F41, 11F70 \\ \hspace*{0.2in } Key words and phrases. Classical Eisenstein series, adelic Eisenstein series, automorphic representations.}
\begin{abstract}
We carry out ``Hecke summation'' for the classical Eisenstein series $E_k$ in an adelic setting. The connection between classical and adelic functions is made by explicit calculations of local and global intertwining operators and Whittaker functions. In the process we determine the automorphic representations generated by the $E_k$, in particular for $k=2$, where the representation is neither a pure tensor nor has finite length. We also consider Eisenstein series of weight $2$ with level, and Eisenstein series with character.
\end{abstract}

\tableofcontents

\section{Introduction}
Let $F\in S_k(\Gamma_0(N))$ be a cusp form of weight $k$ and level $N$, assumed to be an eigenform for almost all Hecke operators. By strong approximation, there exists a unique function $\Phi$ on $G(\A)$, where $G=\GL(2)$ and $\A$ is the ring of adeles of $\Q$, such that $\Phi$ is left invariant under $G(\Q)$, right invariant under $G(\Z_p)$ for all primes $p\nmid N$ and under the local congruence subgroups $\Gamma_0(p^{v_p(N)}\Z_p)$ for $p\mid N$, and such that
\begin{equation}\label{introeq1}
 F(z)=y^{-k/2}\Phi(\mat{1}{x}{}{1}\mat{y}{}{}{1}),\qquad z=x+iy.
\end{equation}
We say that $\Phi$ is the automorphic form corresponding to $F$. The group $G(\A)$, or more precisely the global Hecke algebra $\H$, acts on $\Phi$ by right translation, generating a representation $\pi$. This $\pi$ turns out to be irreducible, resulting in a factorization $\pi\cong\bigotimes\pi_v$ over the places of $\Q$, with representations $\pi_v$ of the local Hecke algebras $\H_v$. Since $F$ is holomorphic, the archimedean $\pi_\infty$ is the discrete series representation $\mathcal{D}_{k-1}^{\rm hol}$ with lowest weight $k$. For finite primes $p\nmid N$, the $\pi_p$ are spherical with Satake parameters related to the Hecke eigenvalues of $F$.

The standard proof that $\pi$ is irreducible goes as follows (see \cite[Thm.~5.19]{Gelbart1975}, \cite[Thm.~3.6.1]{Bump1997}). Since $\Phi$ is a cuspidal automorphic form, it lies in $L^2(G(\Q)Z(\A)\backslash G(\A))$, where $Z$ denotes the center of $G$. As a consequence, $\pi$ decomposes into a direct sum $\bigoplus\pi_i$, actually finite, with irreducible $\pi_i$. The $\pi_i$ are all near-equivalent, meaning if we factor $\pi_i\cong\bigotimes\pi_{i,v}$, then for any pair of indices $(i,j)$ we have $\pi_{i,p}\cong\pi_{j,p}$ for almost all $p$. Now one invokes the strong multiplicity one theorem to conclude that $\pi_i$ and $\pi_j$ are identical. In other words, $\pi$ must be irreducible.

Clearly, this proof does not work for non-cusp forms. For one, the corresponding automorphic form $\Phi$ may no longer be square-integrable. Also, the strong multiplicity one theorem is a result for cusp forms only. Hence, even for the full-level holomorphic Eisenstein series
\begin{equation}\label{introeq2}
 E_k(z)=\frac1{2\zeta(k)}\sum_{\substack{c,d\in\Z\\(c,d)\neq(0,0)}}\frac1{(cz+d)^k},
\end{equation}
where $k\geq4$ is an even integer, it is not obvious that the corresponding automorphic form $\Phi_k$ generates an irreducible representation.

One strategy to prove irreducibility starts with the global parabolically induced representation $V_s:=|\cdot|^s\times|\cdot|^{-s}$, where $s$ is a complex parameter. Assuming that ${\rm Re}(s)>1/2$ to assure absolute convergence, one can construct the adelic Eisenstein series
\begin{equation}\label{introeq3}
 E(g,f)=\sum_{\gamma\in B(\Q)\backslash G(\Q)}f(\gamma g),\qquad g\in G(\A),
\end{equation}
where $B$ denotes the upper triangular subgroup of $G$. Evidently, the map $f\mapsto E(\cdot,f)$ is an intertwining operator from $V_s$ to the space of automorphic forms. Now for an appropriately chosen weight-$k$ function $f_k\in V_{(k-1)/2}$ it turns out that $\Phi_k=E(\cdot,f_k)$ is the automorphic form corresponding to $E_k$; see Theorem~\ref{Ektheorem}. Since $f_k$ generates an irreducible representation, which is easily identified, the intertwining property implies that $\Phi_k$ generates the same representation. In this manner one has proved irreducibility for $k\geq4$. See Corollary~\ref{Ektheoremcor} for the precise identification of the global representation generated by $\Phi_k$.

Certainly, this approach via adelic Eisenstein series is well known. A less familiar situation occurs for weight $k=2$, and indeed it is this case that provided the original motivation for this work. Recall that $E_2$ is a non-holomorphic modular form of weight $2$, given by the conditionally convergent series
\begin{equation}\label{introeq4}
 E_2(z)=-\frac3{\pi y}+\frac1{2\zeta(2)}\,\sum_{c\in\Z}\sum_{\substack{d\in\Z\\(c,d)\neq(0,0)}}\frac1{(cz+d)^2}.
\end{equation}
(See \cite[Sect.~1.2]{DiamondShurman2005}.) What representation is generated by the corresponding automorphic form~$\Phi_2$~? Imitating the above approach, we would start with an appropriate weight-$2$ vector $f_2\in V_{1/2}$. The first difficulty we run into is that the Eisenstein series \eqref{introeq3} is no longer absolutely convergent for $s=1/2$. This difficulty can be overcome by the familiar process of analytic continuation (also known as ``Hecke summation'', pioneered in the work \cite{Hecke1927}) : One embeds $f_2$ into a ``flat section'', considers the summation \eqref{introeq3} in the region of absolute convergence, writes down the Fourier expansion of the Eisenstein series, and observes that each piece admits analytic continuation to a meromorphic function on all of $\C$. A subtlety here is that, for some $f\in V_{1/2}$, the Eisenstein series has a pole at $s=1/2$. However, for $f=f_2$ there is no pole, so that $\Phi_2:=E(g,f_2)$ is well-defined via analytic continuation.

The second difficulty we encounter is whether the map $f\mapsto E(\cdot,f)$ is still $\H$-intertwining. First we have to clarify what this means, since, as mentioned, the map is not defined on all of $V_{1/2}$. In Proposition~\ref{V12pprop} we will identify a $1$-codimensional subspace $V_{1/2}'$ for which $E(\cdot,f)$ can be defined. It turns out that the map $f\mapsto E(\cdot,f)$ is not $\H$-intertwining when restricted to $V_{1/2}'$. We therefore identify an even smaller subspace $V_{1/2}''$ by excluding all weight-$0$ functions. The map $f\mapsto E(\cdot,f)$, restricted to $V_{1/2}''$, is still not quite $\H$-intertwining, but almost; see Lemma~\ref{intpropprop}.

The third difficulty in imitating the proof of the $k\geq4$ case is that the space $V_{1/2}''$ is not an irreducible $\H$-module, but in fact highly reducible. Therefore the injectivity of the map $f\mapsto E(\cdot,f)$ restricted to $V_{1/2}''$ has to be proven in a different way. Our main argument here is contained in Lemma~\ref{HfinPhilemma}.

Eventually we arrive at the following result on the structure of $\H\Phi_2$. This space contains the space of constant automorphic forms $\C$, and the quotient $\H\Phi_2/\C$ factors into local representations analogous to the cases $k\geq4$ (one difference however being that the local representations at finite places are all reducible); see Theorem~\ref{E2reptheorem}. This result has been independently obtained by Horinaga, who took the broader point of view of nearly holomorphic modular forms; see \cite[Thm.~3.8]{Horinaga2021}.

We note that the Maass lowering operator $L$ defined in Sect.~\ref{diffopsec} annihilates $f_2$, but not its image $\Phi_2=E(\cdot,f_2)$. In fact, it sends $\Phi_2$ to a non-zero constant automorphic form. Hence the presence of the invariant subspace $\C$ is a reflection of the fact that the map $f\mapsto E(\cdot,f)$ fails to be $\H$-intertwining at the archimedean place.

It is tempting to eliminate the $\frac1y$-term in \eqref{introeq4} by forming the function $\tilde E_{2,N}(z)=E_2(z)-NE_2(Nz)$ for a positive integer $N>1$. Then indeed $\tilde E_{2,N}\in M_2(\Gamma_0(N))$. In Sect.~\ref{E2Nsec} we consider the adelic origin of these modular forms with level. Since $\tilde E_{2,MN}(z)=M\tilde E_{2,N}(Mz)+\tilde E_{2,M}(z)$, it suffices to consider square-free $N$. In this case the functions $f_{2,N}\in V_{1/2}$ defined in \eqref{Eisleveleq6} are the natural candidates for an ``$f_2$ with level''. It turns out that the adelic Eisenstein series $\Phi_{2,N}:=E(\cdot,f_{2,N})$ does not correspond to $\tilde E_{2,N}$, but to a different modular form $E_{2,N}\in M_2(\Gamma_0(N))$, which we identify in Theorem~\ref{tildeENtheorem}. The $E_{2,N}$ have a somewhat more natural Fourier expansion than the $\tilde E_{2,N}$. In Proposition~\ref{tildeENrelationprop} we clarify the relationship between these two types of functions. Theorem~\ref{E2Nreptheorem} identifies the global representation $\H\Phi_{2,N}$, which is now a (highly reducible) tensor product of local representations. The global representations generated by the automorphic forms corresponding to the $\tilde E_{2,N}$ are in general not tensor products.

In the final section we repeat parts of the previous theory in a modified setting involving a primitive Dirichlet character $\xi$ of conductor $u>1$ and the corresponding character $\chi=\otimes\chi_v$ of $\Q^\times\backslash\A^\times$. The relevant global representations are now $V_{s,\chi}:=\chi|\cdot|^s\times\chi^{-1}|\cdot|^{-s}$. The point is that for $s=(k-1)/2$ the archimedean component $V_{s,\chi_\infty}$ still contains the holomorphic discrete series representation $\mathcal{D}_{k-1}^{\rm hol}$ as a submodule, allowing us to construct from $V_{(k-1)/2,\chi}$ holomorphic modular forms of weight $k$ by choosing appropriate vectors $f_{k,\chi}\in V_{(k-1)/2,\chi}$ and forming an Eisenstein series. This way we obtain certain elements of $M_k(\Gamma_0(u^2))$, which are familiar from the classical theory; see Sect.~\ref{Eischarsec}. For $k=2$ we go one step further and consider natural vectors $f_{2,N,\chi}\in V_{1/2,\chi}$, where $N$ is an appropriately chosen squarefree integer. These lead to Eisenstein series in $M_2(\Gamma_0(u^2N))$ whose Fourier expansion is identified in Theorem~\ref{tildeENtheorem-char}, and which we relate to certain oldforms that can be found in the literature in Proposition~\ref{tildeENrelationprop char}. Finally we identify the global representations generated by the Eisenstein series with character. This is now easier since the global intertwining operator is zero, implying that the map $f\mapsto E(\cdot,f)$ commutes with the action of the global Hecke algebra.

The structure of this paper is as follows. In Sect.~\ref{Sect:Preparations} we review some of our notation and basic theory used in various parts of the paper. In Sect.~\ref{Sect:intertwining}, we compute the local and global intertwining operators for vectors in $V_s$. In Sect.~\ref{Sect:Whittaker}, we compute the local and global Whittaker integrals for vectors in $V_s$. The calculations from Sects.~\ref{Sect:intertwining} and~\ref{Sect:Whittaker} are essential components in proving our main results on adelic and classical Eisenstein series without character in Sect.~\ref{Sect:Eisenstein series}. Finally, in Sect.~\ref{Sect:ES with character}, we treat Eisenstein series with character. We stress that our methods remain elementary and do not require the general theory of Eisenstein series or the theory of nearly holomorphic modular forms.

{\bf Acknowledgements:} We would like to thank Charles Conley, Paul Garrett and Cris Poor for providing helpful comments.
\section{Preparations}\label{Sect:Preparations}
In this section we review some basic facts about differential operators, Hecke algebras and induced representations which will be used throughout the paper.
\subsection{Differential operators}\label{diffopsec}
The Lie algebra $\mathfrak{sl}(2,\C)$ is spanned by
\begin{equation}\label{diffopeq1}
 \hat H=\mat{1}{0}{0}{-1},\qquad\hat R=\mat{0}{1}{0}{0},\qquad\hat L=\mat{0}{0}{1}{0},
\end{equation}
with the commutation relations $[\hat H,\hat R]=2\hat R$, $[\hat H,\hat L]=-2\hat L$ and $[\hat R,\hat L]=\hat H$. Its complexification $\mathfrak{sl}(2,\C)$ is spanned by
\begin{equation}\label{diffopeq2}
 H=-i\mat{0}{1}{-1}{0},\qquad R=\frac12\mat{1}{i}{i}{-1},\qquad L=\frac12\mat{1}{-i}{-i}{-1}
\end{equation}
with the commutation relations $[H,R]=2R$, $[H,L]=-2L$ and $[R,L]=H$. For $\theta\in\R$, let
\begin{equation}\label{rthetadefeq}
 r(\theta)=\mat{\cos(\theta)}{\sin(\theta)}{-\sin(\theta)}{\cos(\theta)}.
\end{equation}
For an integer $k$, let $W(k)$ be the space of smooth functions $\Phi$ on $\SL(2,\R)$ with the property
\begin{equation}\label{Wkdefeq}
 \Phi(gr(\theta))=e^{ik\theta}\,\Phi(g),\qquad\theta\in\R,\:g\in\SL(2,\R).
\end{equation}
This condition is equivalent to $H\Phi=k\Phi$. It follows that $R$ induces a map $W(k)\to W(k+2)$ and $L$ induces a map $W(k)\to W(k-2)$. Let $W$ be the space of smooth functions on the complex upper half plane $\HH$. For $\Phi\in W(k)$ we define an element $\tilde\Phi\in W$ by
\begin{equation}\label{PhitildePhieq}
 \tilde\Phi(x+iy)=y^{-k/2}\,\Phi(\mat{1}{x}{}{1}\mat{y^{1/2}}{}{}{y^{-1/2}}).
\end{equation}
The map $\Phi\mapsto\tilde\Phi$ establishes an isomorphism $W(k)\cong W$.
\begin{proposition}\label{diffopHproposition}
 Define operators $R_k,L_k$ on the space $W$ of smooth functions on $\HH$ by
 $$
  R_k=\frac ky+2i\frac{\partial}{\partial\tau},\qquad L_k=-2iy^2\frac{\partial}{\partial\bar\tau}.
 $$
 Then the diagrams
 $$
  \begin{CD}
   W(k)@>\sim>>W&\qquad\qquad&&W(k)@>\sim>>W\\
   @V{L}VV @VV{L_k}V&@V{R}VV @VV{R_k}V\\
   W(k-2)@>\sim>>W&&&W(k+2)@>\sim>>W
  \end{CD}
 $$
 are commutative.
\end{proposition}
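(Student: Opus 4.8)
The plan is to realize $R$ and $L$ as differential operators through the right regular action of the complexified Lie algebra on functions on $\SL(2,\R)$, namely $(X\Phi)(g)=\frac{d}{dt}\big|_{t=0}\Phi(g\exp(tX))$, and then to transport the result to $\HH$ via the isomorphism $\Phi\mapsto\tilde\Phi$ of \eqref{PhitildePhieq}. Since $R$ and $L$ are the complex combinations $R=\frac12(\hat H+i\hat R+i\hat L)$ and $L=\frac12(\hat H-i\hat R-i\hat L)$ of the real basis \eqref{diffopeq1}, it suffices to compute the action of each of $\hat H,\hat R,\hat L$ and assemble the answer at the end.

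First I would compute, for each of $\hat H,\hat R,\hat L$, the first-order effect of right multiplication by $\exp(tX)$ on the Iwasawa coordinates of $g=n(x)a(y)$, where $n(x)=\mat1x{}1$, $a(y)=\mat{y^{1/2}}{}{}{y^{-1/2}}$, and $\theta$ is the rotation parameter. Writing $g\exp(tX)=n(x(t))a(y(t))r(\theta(t))$, the velocities $\dot x(0),\dot y(0),\dot\theta(0)$ can be read off from the identity $M^{-1}\dot M=X$ evaluated at $\theta=0$: expressing the coordinate logarithmic derivatives $M^{-1}\partial_x M=\frac1y\hat R$, $M^{-1}\partial_y M=\frac1{2y}\hat H$, $M^{-1}\partial_\theta M=\hat R-\hat L$ in the basis $\hat H,\hat R,\hat L$ and inverting yields the velocity triples $(\dot x,\dot y,\dot\theta)=(y,0,0)$ for $\hat R$, $(0,2y,0)$ for $\hat H$, and $(y,0,-1)$ for $\hat L$.

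Next I would use the weight condition \eqref{Wkdefeq}, which forces $\partial_\theta\Phi=ik\Phi$ at $\theta=0$, to rewrite each $(X\Phi)(n(x)a(y))$ purely in terms of $\partial_x,\partial_y$ acting on the function $(x,y)\mapsto\Phi(n(x)a(y))$ together with multiplication by $k$. Forming the combinations for $R$ and $L$ and substituting $\Phi(n(x)a(y))=y^{k/2}\tilde\Phi(x+iy)$ then reduces everything to an elementary calculation with the Wirtinger derivatives, using $\partial_y-i\partial_x=-2i\partial_{\bar\tau}$ and $\partial_y+i\partial_x=2i\partial_\tau$. The $y^{k/2}$ prefactors produced by the product rule recombine with the weight-shifted normalization $y^{-(k\mp2)/2}$ in the definitions of $\widetilde{L\Phi}$ and $\widetilde{R\Phi}$ to give exactly $L_k$ and $R_k$.

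The only genuinely delicate step is the $\hat L$ computation: because $\exp(t\hat L)$ is lower triangular it does not preserve the product form $n(x)a(y)$, so $g\exp(t\hat L)$ must be restored to Iwasawa form, and this is precisely what produces the nonzero rotation velocity $\dot\theta=-1$. Through the relation $\partial_\theta\Phi=ik\Phi$ this term contributes the weight-dependent zeroth-order pieces; combined with the product-rule terms coming from the factor $y^{k/2}$, these produce the $\frac ky$ summand of $R_k$ while cancelling for $L_k$ (explaining why $L_k$ has no zeroth-order term). Everything else is routine once the three velocity triples and the Wirtinger substitution are in hand, the one remaining pitfall being to keep careful track that the target normalizations use the shifted weights $k-2$ and $k+2$ rather than $k$.
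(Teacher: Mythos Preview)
Your proposal is correct and is precisely the ``standard calculation'' the paper alludes to; the paper's own proof consists of the single sentence ``Standard calculations.'' Your outline supplies exactly those calculations (Iwasawa velocities for $\hat H,\hat R,\hat L$, the weight relation $\partial_\theta\Phi=ik\Phi$, and the Wirtinger substitution), and the details you indicate check out verbatim.
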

\begin{proof}
Standard calculations.
\end{proof}
\subsection{Hecke algebras}\label{heckesec}
To have a convenient notation, we work with the local and global Hecke algebras. In the global case, we will have no opportunity in this note for base fields other than $\Q$, so we will make the definitions for this case only. The symbol $\A$ will always denote the ring of adeles of $\Q$.

For each prime $p$, let $\H_p$ be the local Hecke algebra at $p$, consisting of compactly supported, locally constant functions on $G(\Q_p)$. Note that these algebras are non-unital. The category of smooth $G(\Q_p)$-representations is equivalent to the category of nondegenerate (in the sense of \cite{Cartier1979}) $\H_p$-modules. We understand all $\H_p$-modules to be non-degenerate without mentioning it. We let $\H_{\rm fin}=\bigotimes\H_p$, the restricted tensor product taken over all prime numbers. Note that the restricted tensor product requires a choice of distinguished vector at almost every place; we always take the characteristic function of $K_p=\GL(2,\Z_p)$ to be the distinguished vector.

We will use the following notations for archimedean objects:
\begin{equation}\label{archnoteq1}
 G(\R)=\GL(2,\R),\qquad\mathfrak{g}=\mathfrak{gl}(2,\R),\qquad K_\infty={\rm O}(2).
\end{equation}
There is a general notion of archimedean Hecke algebra $\H_\infty$, introduced in \cite{KnappVogan1995}, such that the category of $(\mathfrak{g},K_\infty)$-modules is isomorphic to the category of $\H_\infty$-modules. Like in the $p$-adic case $\H_\infty$ is non-unital. However, in our situation we can get by with the simpler version of $\H_\infty$ given in \cite[Def.~4.1]{Gelbart1975}. The point is that if a vector $v$ in a $(\mathfrak{g},K_\infty)$-module has a weight already, then $\SO(2)$ acting on $v$ stays within the same one-dimensional space. We really only need to act with the universal enveloping algebra $\mathcal{U}(\mathfrak{g})$ and the group element $\mat{1}{}{}{-1}$. Hence we introduce a formal element $\varepsilon_-$ and define $\H_\infty=\mathcal{U}(\mathfrak{g})\oplus\varepsilon_-\mathcal{U}(\mathfrak{g})$, with the multiplication determined by $\varepsilon_-^2=1$ and $\varepsilon_-X\varepsilon_-=\mat{1}{}{}{-1}X\mat{1}{}{}{-1}$ for $X\in\mathfrak{g}$. Note that this version of $\H_\infty$ actually is unital.

The global Hecke algebra is $\H=\H_\infty\otimes\H_{\rm fin}$. It acts on the space $\mathcal{A}$ of automorphic forms on $G(\A):=\GL(2,\A)$. Any irreducible subquotient is called an automorphic representation. The $G(\A)$-representation generated by an automorphic form $\Phi$ is $\H\Phi$. Here, we use the word ``$G(\A)$-representation'' as a synonym for $\H$-module, even though it is not a representation of $G(\A)$ in the strict sense of the word. 

If $(\pi,V)$ is an irreducible $\H$-module, then there exist irreducible $\H_p$-modules $(\pi_p,V_p)$ for all places $p\leq\infty$, and for almost all finite $p$ a non-zero $K_p$-fixed vector $v_p$, such that $\pi$ is the restricted tensor product of the representations $\pi_v$ with respect to these distinguished vectors. We shall simply write $\pi\cong\bigotimes\pi_p$ or $V\cong\bigotimes V_p$.
\subsection{Induced representations}\label{indrepsec}
In this paper we will work exclusively with parabolically induced representations of $\GL(2)$ over a $p$-adic field, or over $\R$, or over the adeles of $\Q$. We recall some basic facts.
\subsubsection*{The non-archimedean case}
Let $F$ be a non-archimedean local field of characteristic zero. In this context we will always use the following notations. We let $\OF$ be the ring of integers of $F$, and $\p$ the maximal ideal of $\OF$. The symbol $\varpi$ denotes a generator of $\p$. The absolute value $|\cdot|$ on $F$ is normalized such that $|\varpi|=q^{-1}$, where $q=\#\OF/\p$. Let $v$ be the normalized valuation on $F$. We normalize the Haar measure on $F$ such that the volume of $\OF$ is $1$.

Let $\eta$ be a character of $F^\times$. Using a common notation, we denote by $\eta\times\eta^{-1}$ the representation of $G=\GL(2,F)$ parabolically induced by the character $\mat{a}{b}{}{d}\mapsto\eta(a/d)$ of the upper triangular subgroup $B$. Hence, the standard model of $\eta\times\eta^{-1}$ consists of smooth functions $f:G\to\C$ with the transformation property
\begin{equation}\label{indreptrafoeq}
 f(\mat{a}{b}{}{d}g)=\eta(a/d)\,|a/d|^{1/2}f(g)
\end{equation}
for $g\in G$, $a,d\in F^\times$, and $b\in F$. The group $G$ acts on this space by right translations. It is known by \cite[Thm.~3.3]{JacquetLanglands1970} or \cite[Thm.~4.5.1]{Bump1997} that $\eta\times\eta^{-1}$ is irreducible except when $\eta^2=|\cdot|^{\pm1}$. If $\eta^2=|\cdot|$, and hence $\eta=\mu|\cdot|^{1/2}$ with a quadratic character $\mu$, then there is an exact sequence
\begin{equation}\label{padicexacteq1}
 0\longrightarrow\mu\St_{\GL(2,F)}\longrightarrow\mu|\cdot|^{1/2}\times\mu|\cdot|^{-1/2}\longrightarrow\mu\triv_{\GL(2,F)}\longrightarrow0,
\end{equation}
where $\St_{\GL(2,F)}$ (resp.~$\triv_{\GL(2,F)}$) denotes the Steinberg (resp.\ trivial) representation of $\GL(2,F)$. If $\eta=\mu|\cdot|^{-1/2}$ with a quadratic character $\mu$, then there is an exact sequence
\begin{equation}\label{padicexacteq2}
 0\longrightarrow\mu\triv_{\GL(2,F)}\longrightarrow\mu|\cdot|^{-1/2}\times\mu|\cdot|^{1/2}\longrightarrow\mu\St_{\GL(2,F)}\longrightarrow0.
\end{equation}
In this latter case, the one-dimensional subspace realizing $\mu\triv_{\GL(2,F)}$ is spanned by the function $g\mapsto\mu(\det(g))$.

To have a concise notation, we let $V_s=|\cdot|^s\times|\cdot|^{-s}$ for a complex parameter $s$. Then
\begin{equation}\label{indreptrafoeq2}
 f(\mat{a}{b}{}{d}g)=\Big|\frac ad\Big|^{s+1/2}f(g)
\end{equation}
for the functions in $V_s$. For $s=1/2$ and $s=-1/2$ we have the exact sequences
\begin{align}
 \label{padicexacteq3}&0\longrightarrow\St_{\GL(2,F)}\longrightarrow V_{1/2}\longrightarrow\triv_{\GL(2,F)}\longrightarrow0,\\
 \label{padicexacteq4}&0\longrightarrow\triv_{\GL(2,F)}\longrightarrow V_{-1/2}\longrightarrow\St_{\GL(2,F)}\longrightarrow0.
\end{align}
\subsubsection*{The archimedean case}
Now consider the field $\R$ with its usual absolute value $|\cdot|$. For a complex parameter $s$, there exists a Hilbert space representation $\hat V_s$ whose space of smooth vectors consists of smooth functions $f:\GL(2,\R)\to\C$ with the transformation property \eqref{indreptrafoeq2}; see \cite[Prop.~2.5.3]{Bump1997}. We usually work with the subspace $V_s$ of $K_\infty$-finite vectors, which is a $(\mathfrak{g},K_\infty)$-module, or equivalently, an $\H_\infty$-module. As a vector space, $V_s$ has a basis consisting of the weight-$k$ functions $f_s^{(k)}$ for $k\in2\Z$, where we use the Iwasawa decomposition to define
\begin{equation}\label{localrealeq2}
 f_s^{(k)}(\mat{a}{b}{}{d}r(\theta))=\Big|\frac ad\Big|^{s+1/2}e^{ik\theta},\qquad r(\theta)=\mat{\cos(\theta)}{\sin(\theta)}{-\sin(\theta)}{\cos(\theta)}.
\end{equation}
The following result uses the Lie algebra elements defined in \eqref{diffopeq2}.
\begin{lemma}\label{RHLVslemma}
 Let $f_s^{(k)}\in V_s$ be the function in \eqref{localrealeq2}. Then, for all even integers $k$,
 \begin{align}
  \label{RHLVslemmaeq2}Hf_s^{(k)}&=kf_s^{(k)},\\
  \label{RHLVslemmaeq1}Rf_s^{(k)}&=\Big(s+\frac{1+k}2\Big)f_s^{(k+2)},\\
  \label{RHLVslemmaeq3}Lf_s^{(k)}&=\Big(s+\frac{1-k}2\Big)f_s^{(k-2)},\\
  \label{RHLVslemmaeq4}\varepsilon_-f_s^{(k)}&=f_s^{(-k)}.
 \end{align}
\end{lemma}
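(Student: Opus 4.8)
The plan is to compute all four actions directly from the Iwasawa-coordinate formula \eqref{localrealeq2}, using that each element $X\in\mathfrak{sl}(2,\C)$ acts on $V_s$ by right differentiation, $(Xf)(g)=\frac{d}{dt}\big|_{t=0}f(g\exp(tX))$ (extended complex-linearly from the real Lie algebra), while the formal element $\varepsilon_-$ acts by right translation by $w:=\mat{1}{}{}{-1}$, that is, $(\varepsilon_-f)(g)=f(gw)$. Identity \eqref{RHLVslemmaeq2} is then immediate: since $\mat{0}{1}{-1}{0}$ exponentiates to $r(\theta)$ and \eqref{localrealeq2} gives $f_s^{(k)}(gr(\theta))=e^{ik\theta}f_s^{(k)}(g)$, differentiating at $\theta=0$ yields $Hf_s^{(k)}=-i\cdot ik\,f_s^{(k)}=kf_s^{(k)}$.

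For \eqref{RHLVslemmaeq1} and \eqref{RHLVslemmaeq3} I would first make a purely algebraic reduction. From the commutation relations $[H,R]=2R$, $[H,L]=-2L$ and \eqref{RHLVslemmaeq2}, the vectors $Rf_s^{(k)}$ and $Lf_s^{(k)}$ are $H$-eigenvectors of weights $k+2$ and $k-2$. Since $\{f_s^{(m)}\}_{m\in2\Z}$ is a basis of $V_s$ with a single vector in each weight, this forces $Rf_s^{(k)}=c_kf_s^{(k+2)}$ and $Lf_s^{(k)}=d_kf_s^{(k-2)}$ for scalars $c_k,d_k$; and since $f_s^{(m)}(\idm)=1$, evaluating at the identity gives $c_k=(Rf_s^{(k)})(\idm)$ and $d_k=(Lf_s^{(k)})(\idm)$.

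It then remains to compute these two numbers. Writing the complex generators in the real basis \eqref{diffopeq1} as $R=\tfrac12(\hat H+i\hat R+i\hat L)$ and $L=\tfrac12(\hat H-i\hat R-i\hat L)$, I only need $(\hat Hf_s^{(k)})(\idm)$, $(\hat Rf_s^{(k)})(\idm)$ and $(\hat Lf_s^{(k)})(\idm)$. The first two are painless: $\exp(t\hat H)=\mat{e^t}{}{}{e^{-t}}$ is diagonal and gives value $2s+1$, while $\exp(t\hat R)=\mat{1}{t}{}{1}$ is already upper triangular with $\theta=0$ and gives $0$. The one genuine calculation, which I expect to be the only real obstacle, is $(\hat Lf_s^{(k)})(\idm)$: here $\exp(t\hat L)=\mat{1}{}{t}{1}$ is lower unipotent and must first be put into Iwasawa form. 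Solving $\mat{1}{}{t}{1}=\mat{a}{b}{}{d}r(\theta)$ gives $\theta=-\arctan t$ and $a/d=(1+t^2)^{-1}$, so that $f_s^{(k)}(\exp(t\hat L))=(1+t^2)^{-(s+1/2)}e^{-ik\arctan t}$; differentiating at $t=0$ yields $-ik$. Combining, $c_k=\tfrac12\big((2s+1)+k\big)=s+\tfrac{1+k}2$ and $d_k=\tfrac12\big((2s+1)-k\big)=s+\tfrac{1-k}2$, which are exactly \eqref{RHLVslemmaeq1} and \eqref{RHLVslemmaeq3}.

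Finally, \eqref{RHLVslemmaeq4} rests on the elementary relation $r(\theta)w=wr(-\theta)$, expressing that the reflection $w$ conjugates rotations to their inverses. Writing $g=\mat{a}{b}{}{d}r(\theta)$ this gives $gw=\mat{a}{-b}{}{-d}r(-\theta)$, so $(\varepsilon_-f_s^{(k)})(g)=f_s^{(k)}(gw)=|a/d|^{s+1/2}e^{-ik\theta}=f_s^{(-k)}(g)$. The whole argument is thus elementary; the only place demanding care is the bookkeeping of the Iwasawa normalization in \eqref{localrealeq2}, after which every identity reduces to a one-line first-order expansion.
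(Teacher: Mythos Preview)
Your proof is correct and is precisely the kind of direct computation the paper alludes to with ``Standard calculations.'' Every step checks out: the Iwasawa decomposition of $\exp(t\hat L)$, the decomposition $R=\tfrac12(\hat H+i\hat R+i\hat L)$, $L=\tfrac12(\hat H-i\hat R-i\hat L)$, and the conjugation identity $r(\theta)w=wr(-\theta)$ are all handled correctly, and the reduction to evaluating at the identity via the weight argument is a clean shortcut.
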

\begin{proof}
Standard calculations.
\end{proof}

It follows that $V_s$ is irreducible unless $s\in\frac12+\Z$. If $s=\frac{k-1}2$ with a positive even integer $k$, then there is an exact sequence
\begin{equation}\label{realexacteq1}
 0\longrightarrow\mathcal{D}_{k-1}^{\rm hol}\longrightarrow V_s\longrightarrow\mathcal{F}_{k-1}\longrightarrow0,
\end{equation}
where $\mathcal{D}_{k-1}^{\rm hol}$ is the discrete series representation of $\PGL(2,\R)$ with weight structure $[\ldots,-k-2,-k,k,k+2,\ldots]$, and $\mathcal{F}_{k-1}$ is the $(k-1)$-dimensional irreducible representation of $\PGL(2,\R)$ with weight structure $[-k+2,-k+4,\ldots,k-4,k-2]$. If $s=\frac{1-k}2$ with a positive even integer $k$, then there is an exact sequence
\begin{equation}\label{realexacteq2}
 0\longrightarrow\mathcal{F}_{k-1}\longrightarrow V_s\longrightarrow\mathcal{D}_{k-1}^{\rm hol}\longrightarrow0.
\end{equation}
We may also consider the twist of $V_s$ by the sign character of $\R^\times$, i.e, $\sgn|\cdot|^s\times\sgn|\cdot|^{-s}$. Then we have similar reducibilities and exact sequences as in \eqref{realexacteq1}, except $\mathcal{F}_{k-1}$ is replaced by the twist $\sgn\,\mathcal{F}_{k-1}$. (The discrete series representations are invariant under twisting by $\sgn$.)

For $s=1/2$ and $s=-1/2$ we have, in analogy with \eqref{padicexacteq3} and \eqref{padicexacteq4}, the exact sequences
\begin{align}
 \label{realexacteq3}&0\longrightarrow\mathcal{D}_1^{\rm hol}\longrightarrow V_{1/2}\longrightarrow\mathcal{F}_1\longrightarrow0,\\
 \label{realexacteq4}&0\longrightarrow\mathcal{F}_1\longrightarrow V_{-1/2}\longrightarrow\mathcal{D}_1^{\rm hol}\longrightarrow0,
\end{align}
where $\mathcal{D}_1^{\rm hol}$ is the lowest discrete series representation (lowest weight $2$) and $\mathcal{F}_1=\triv_{\GL(2,\R)}$ is the trivial representation.
\subsubsection*{The global case}
Let $\A$ be the ring of adeles of $\Q$. In the global context we denote the absolute value on $\Q_p$ by $|\cdot|_p$, the absolute value on $\R$ by $|\cdot|_\infty$, and let $|\cdot|=\prod_{v\leq\infty}|\cdot|_v$ be the global absolute value (the product being over the places of $\Q$). For a complex parameter $s$ and a place $v$ we have the local $\H_v$-modules $V_{s,v}$ defined above. We also have an analogous global $\H$-module $V_s$, consisting of smooth $K_\infty$-finite functions $f$ on $\GL(2,\A)$ with the transformation property \eqref{indreptrafoeq}. There is a natural isomorphism of $\H$-modules $V_s\cong\bigotimes V_{s,v}$, where we mean the restricted tensor product over the places of $\Q$. We take the unique $K_p$-invariant function $f_{s,p}^{\rm sph}\in V_{s,p}$ with the property $f_{s,p}^{\rm sph}(1)=1$ as the distinguished vector to form the restricted tensor product.

The global $V_{1/2}$ is highly reducible, since every $V_{1/2,v}$ is reducible. To have a uniform notation, we let $\mathcal{D}_v$ be the infinite-dimensional invariant subspace of $V_{1/2,v}$, and $\mathcal{F}_v=V_{1/2,v}/\mathcal{D}_v$. Hence
\begin{align}
 \label{DpFpeq1}\mathcal{D}_v&\cong\begin{cases}
                \mathcal{D}_1^{\rm hol}&\text{if }v=\infty,\\
                \St_{\GL(2,\Q_p)}&\text{if }v=p<\infty,
               \end{cases}\\
 \label{DpFpeq2}\mathcal{F}_v&\cong\begin{cases}
                \mathcal{F}_1&\text{if }v=\infty,\\
                \triv_{\GL(2,\Q_p)}&\text{if }v=p<\infty.
               \end{cases}
\end{align}
By \cite[Lemma~1]{Langlands1979}, the irreducible subquotients of $V_{1/2}$ are $\big(\bigotimes_{v\in S}\mathcal{D}_v\big)\otimes\big(\bigotimes_{v\notin S}\mathcal{F}_v\big)$, where $S$ is a finite set of places.
\subsubsection*{Flat sections}
In the $p$-adic case let $G=\GL(2,F)$, $K=\GL(2,\OF)$, in the real case let $G=\GL(2,\R)$, $K=K_\infty$, and in the global case let $G=\GL(2,\A)$, $K=K_\infty\prod_{p<\infty}\GL(2,\Z_p)$. In either context, a family of functions $f_s\in V_s$, where $s$ runs through a complex domain $D$, is said to be a \emph{flat section} if the restriction of $f_s$ to $K$ is independent of $s$. Using the Iwasawa decomposition, we define a function $\delta:G\to\C$ by
\begin{equation}\label{deltadefeq}
 \delta(g)=\Big|\frac ad\Big|,\qquad\text{where }g=\mat{a}{b}{0}{d}\kappa,\;\kappa\in K.
\end{equation}
If $f\in V_{s_0}$, then the function $f_s:=\delta^{s-s_0}f$ lies in $V_s$, for any $s,s_0\in\C$. The family $\{f_s\}$ is then the unique flat section containing $f$.
\section{Local and global intertwining operators} \label{Sect:intertwining}
In this section we review the standard intertwining operators $A_s:V_s\to V_{-s}$ in the non-archimedean, archimedean and global case. The global intertwining operator has a simple pole at the critical point $s=1/2$. We will show in Sect.~\ref{V12psec} that it can still be evaluated on a large enough invariant subspace. The fact that it does not retain the full intertwining property is responsible for the non-holomorphy of the classical modular form $E_2$.
\subsection{Non-archimedean case}\label{interpadic}
Let $F$ be a non-archimedean local field of characteristic zero. The symbols $\OF$, $\p$, $\varpi$, $q$, $|\cdot|$ and $v$ have the same meanings as in Sect.~\ref{indrepsec}. We let $G=\GL(2,F)$ and $K=\GL(2,\OF)$. For a non-negative integer $n$, let $\Gamma_0(\p^n)=K\cap\mat{\OF}{\OF}{\p^n}{\OF}$.

For a complex parameter $s$ let $V_s=|\cdot|^s\times|\cdot|^{-s}$ be as in Sect.~\ref{indrepsec}, with the reducibilities \eqref{padicexacteq3} and \eqref{padicexacteq4}. For $f_s\in V_s$ with $q^{2s}\neq1$, we define
\begin{equation}\label{localpadiceq1}
 (A_sf_s)(g):=\lim_{N\to\infty}\bigg(\int\limits_{\substack{F\\v(b)>-N}}f_s(\mat{}{-1}{1}{}\mat{1}{b}{}{1}g)\,db+(1-q^{-1})\frac{q^{-2Ns}}{1-q^{-2s}}f_s(g)\bigg).
\end{equation}
Using the identity $\mat{}{-1}{1}{}\mat{1}{b}{}{1}=\matb{b^{-1}}{-1}{}{b}\mat{1}{}{b^{-1}}{1}$, it is easily verified that the expression in parantheses stabilizes for large enough $N$, so that the definition makes sense. Assuming that ${\rm Re}(s)>0$, a standard calculation shows that
\begin{equation}\label{localpadiceq2}
 (A_sf_s)\left(g\right)=\int\limits_Ff_s(\begin{bsmallmatrix}&-1\\1\end{bsmallmatrix}\begin{bsmallmatrix}1&b\\&1\end{bsmallmatrix}g)\,db.
\end{equation}
It is straightforward to verify from \eqref{localpadiceq2} that $A_sf_s\in V_{-s}$, so that we obtain an intertwining operator $A_s:V_s\to V_{-s}$ for ${\rm Re}(s)>0$. In fact, the intertwining property can also be verified from \eqref{localpadiceq1}, so that it holds for any $s\in\C$ such that $q^{2s}\neq1$.

Now assume that $f_s$ varies in a flat section. Then it follows from \eqref{localpadiceq1} that $(A_sf_s)(g)$ is a meromorphic function of $s$, for any fixed $g$, with possible poles at the points where $q^{2s}=1$.

\begin{remark}
 A different proof of the intertwining property
\begin{equation}\label{localpadiceq3}
 (A_sf^h)(g)=(A_sf)(gh),
\end{equation}
where $f\in V_s$ and $f^h(g)=f(gh)$, goes as follows. First, it holds for ${\rm Re}(s)>0$ by \eqref{localpadiceq2}. For other values of $s$, let $f_s$ be the flat section containing $f$. Then $(A_s(f_s)^h)(g)=(A_sf_s)(gh)$ holds for ${\rm Re}(s)>0$, and one argues that both sides are meromorphic functions of $s$. However, it is not entirely obvious that the left hand side is a meromorphic function of $s$, since in general $(f_s)^h\neq (f^h)_s$.
\end{remark}

For a compact-open subgroup $\Gamma$ of $G$, let $V_s^\Gamma$ be the finite-dimensional subspace of $V_s$ consisting of $\Gamma$-invariant vectors. The intertwining operator $A_s$ induces a map $V_s^\Gamma\to V_{-s}^\Gamma$. We consider in particular $\Gamma=\Gamma_0(\p)$. Since $G=B\Gamma_0(\p)\sqcup B\mat{}{-1}{1}{}\Gamma_0(\p)$, where $B$ is the upper triangular subgroup, the space $V_s^{\Gamma_0(\p)}$ is $2$-dimensional. We define two distinguished vectors in this space. The first is the normalized spherical vector $f_s^{\rm sph}$, characterized by $f_s^{\rm sph}(1)=1$ and being $K$-invariant. The second is the \emph{Steinberg vector}
\begin{equation}\label{localpadiceq4}
 f_s^{\rm St}=\frac1{1-q^{2s+1}}\Big((1+q^{2s})f_s^{\rm sph}-q^{s-1/2}(q+1)\mat{1}{}{}{\varpi}f_s^{\rm sph}\Big),
\end{equation}
which satisfies $f_s^{\rm St}(1)=1$ and $f_s^{\rm St}(\mat{}{-1}{1}{})=-q^{-1}$. The two vectors $f_s^{\rm sph}$ and $f_s^{\rm St}$ form a basis of $V_s^{\Gamma_0(\p)}$. Calculations show that
\begin{align}
 \label{localpadiceq5}A_sf^{\rm sph}_s&=\frac{1-q^{-2s-1}}{1-q^{-2s}}\,f^{\rm sph}_{-s},\\
 \label{localpadiceq6}A_sf_s^{\rm St}&=-q^{-1}\frac{1-q^{-2s+1}}{1-q^{-2s}}f_{-s}^{\rm St}.
\end{align}
In particular, for $s=1/2$,
\begin{equation}\label{localpadiceq7}
 f_{1/2}^{\rm St}=\frac1{1-q}\Big(f_{1/2}^{\rm sph}-\mat{1}{}{}{\varpi}f_{1/2}^{\rm sph}\Big)
\end{equation}
lies in the kernel of $A_{1/2}$. It is the newform in the Steinberg representation, explaining the name. The kernel of $A_{1/2}$ is the subrepresentation $\St_{\GL(2,F)}$ of $V_{1/2}$. The vector $f_{-1/2}^{\rm sph}$, which is a constant function, spans the kernel of $A_{-1/2}$.

\begin{remark}
 If $f\in V_{1/2}$ lies in $\St_{\GL(2,F)}$, and if $f_s$ is the flat section containing $f$, then the function $(A_sf_s)(g)$ has a zero at $s=1/2$. Therefore the definition
 \begin{equation}\label{localpadiceq21}
  (B_{1/2}f)(g):=\lim_{s\to1/2}\frac{(A_sf_s)(g)}{s-1/2}
 \end{equation}
 makes sense. It is easy to see that $B_{1/2}f\in V_{-1/2}$. It follows from \eqref{localpadiceq6} that $B_{1/2}$ is non-zero. As a consequence, $B_{1/2}$ cannot be an intertwining operator, since $V_{-1/2}$ does not contain an invariant subspace isomorphic to $\St_{\GL(2,F)}$.
\end{remark}
\subsection{Archimedean case}\label{interarch}
In the archimedean case we recall that $V_s$ is the subspace of $K_\infty$-finite vectors of the Hilbert space representation $\hat V_s=|\cdot|^s\times|\cdot|^{-s}$ of $G(\R)=\GL(2,\R)$. It is spanned by the functions $f_s^{(k)}$ defined in \eqref{localrealeq2}, for $k\in2\Z$. For $f\in V_s$ we define
\begin{equation}\label{localrealeq3}
 (A_sf)\left(g\right)=\int\limits_\R f(\begin{bsmallmatrix}&-1\\1\end{bsmallmatrix}\begin{bsmallmatrix}1&b\\&1\end{bsmallmatrix}g)\,db.
\end{equation}
The calculation in \cite[Prop.~2.6.2]{Bump1997} shows that the integral in \eqref{localrealeq3} is convergent for ${\rm Re}(s)>0$ (just like in the $p$-adic case) and defines a vector in $V_{-s}$.

\begin{lemma}\label{localreallemma1}
 Assume that ${\rm Re}(s)>0$ and $k\in2\Z$. Then
 \begin{equation}\label{localreallemma1eq1}
  A_sf_s^{(k)}=(-1)^{k/2}\sqrt{\pi}\,\frac{\Gamma(s)\Gamma(s+1/2)}{\Gamma(s+(1+k)/2)\Gamma(s+(1-k)/2)}f_{-s}^{(k)}.
 \end{equation}
\end{lemma}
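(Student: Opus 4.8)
The plan is to reduce the claim to the computation of a single scalar. Since $A_s$ is given by integration over the unipotent radical, the formula \eqref{localrealeq3} shows directly that $(A_sf^{r(\phi)})(g)=(A_sf)(gr(\phi))$, so $A_s$ commutes with right translation by $K_\infty$ and therefore preserves weights. Because the weight-$k$ subspace of each of $V_s$ and $V_{-s}$ is one-dimensional (spanned by $f_s^{(k)}$ and $f_{-s}^{(k)}$ respectively, by Lemma~\ref{RHLVslemma}), we must have $A_sf_s^{(k)}=c_k(s)f_{-s}^{(k)}$ for a scalar $c_k(s)$, and the whole task becomes the determination of $c_k(s)$.

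To isolate $c_k(s)$ I would evaluate both sides at $g=\idm$. Since $f_{-s}^{(k)}(\idm)=1$ by \eqref{localrealeq2}, this gives $c_k(s)=\int_\R f_s^{(k)}(w n(b))\,db$ with $w=\mat{}{-1}{1}{}$ and $n(b)=\mat{1}{b}{}{1}$. The first concrete step is the Iwasawa decomposition of $wn(b)=\mat{0}{-1}{1}{b}$: reading off its bottom row $(1,b)$ via the standard $\SL(2,\R)$ formulas, the split-torus part contributes $|a/d|=(1+b^2)^{-1}$ and the rotation part satisfies $e^{-i\theta}=(b+i)(1+b^2)^{-1/2}$. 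Substituting into \eqref{localrealeq2} produces the branch-free integrand
$$c_k(s)=\int_\R\frac{(b-i)^k}{(1+b^2)^{s+1/2+k/2}}\,db,$$
which converges precisely for $\mathrm{Re}(s)>0$, in agreement with the convergence range from \cite[Prop.~2.6.2]{Bump1997}.

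The heart of the argument, and the step I expect to be the main obstacle, is the evaluation of this integral together with a careful determination of its phase. The plan is to factor $(1+b^2)^{\lambda}=(b-i)^{\lambda}(b+i)^{\lambda}$ using principal branches (whose arguments sum to zero on the real axis) and rewrite the integrand as $(b-i)^{-\alpha}(b+i)^{-\beta}$ with $\alpha=s+\tfrac12-\tfrac k2$ and $\beta=s+\tfrac12+\tfrac k2$. Then $b-i=-i(1+ib)$ and $b+i=i(1-ib)$ convert this to $i^{\alpha-\beta}(1+ib)^{-\alpha}(1-ib)^{-\beta}$; the key observation is that $\alpha-\beta=-k\in\Z$, so the prefactor is the unambiguous $i^{-k}=(-1)^{k/2}$ and no branch bookkeeping survives. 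One is then left with the classical beta-type integral $\int_\R(1+ib)^{-\alpha}(1-ib)^{-\beta}\,db=2^{2-\alpha-\beta}\pi\,\Gamma(\alpha+\beta-1)/(\Gamma(\alpha)\Gamma(\beta))$, valid for $\mathrm{Re}(\alpha+\beta)>1$.

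Finally, substituting $\alpha+\beta=2s+1$ yields $c_k(s)=(-1)^{k/2}\,2^{1-2s}\pi\,\Gamma(2s)/\big(\Gamma(s+\tfrac{1-k}2)\Gamma(s+\tfrac{1+k}2)\big)$. The denominator already matches \eqref{localreallemma1eq1}, and Legendre's duplication formula $\Gamma(2s)=2^{2s-1}\pi^{-1/2}\Gamma(s)\Gamma(s+\tfrac12)$ collapses the factor $2^{1-2s}\pi\,\Gamma(2s)$ to $\sqrt\pi\,\Gamma(s)\Gamma(s+\tfrac12)$, giving exactly the asserted formula. The only genuinely delicate points are the phase of the rotation factor in the Iwasawa decomposition (which pins down the sign) and the standard integral itself; the remainder is routine bookkeeping.
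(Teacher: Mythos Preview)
Your argument is correct. The paper's own proof is simply a citation to \cite[Prop.~2.6.3]{Bump1997}, and what you have written is precisely the computation carried out there: reduce to a scalar by weight preservation, evaluate at the identity via the Iwasawa decomposition of $wn(b)$, identify the resulting integral with the classical beta-type integral $\int_\R(1+ib)^{-\alpha}(1-ib)^{-\beta}\,db$, and finish with Legendre's duplication formula. So your route coincides with the one the paper defers to; you have just made it self-contained rather than citing it.
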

\begin{proof}
See \cite[Prop.~2.6.3]{Bump1997}.
\end{proof}

In particular,
\begin{align}
 \label{localreallemma1eq2}A_sf_s^{(0)}&=\sqrt{\pi}\,\frac{\Gamma(s)}{\Gamma(s+1/2)}f_{-s}^{(0)},\\
 \label{localreallemma1eq3}A_sf_s^{(2)}&=-\sqrt{\pi}\,\frac{2s-1}{2s+1}\cdot\frac{\Gamma(s)}{\Gamma(s+1/2)}f_{-s}^{(2)}.
\end{align}

We can use \eqref{localreallemma1eq1} to define $A_sf_s^{(k)}$ for any $s\notin\{0,-1,-2,\ldots\}$. (The numerator has poles, some of which are cancelled by poles of the denominator.) Using Lemma~\ref{RHLVslemma} one can show that the map $A_s:V_s\to V_{-s}$ thus defined is a map of $\H_\infty$-modules.

\begin{remark}
 Assume that $\hat V_s$ and $\hat V_{-s}$ are irreducible. Then the existence of the intertwining operator $A_s$ shows that $\hat V_s$ and $\hat V_{-s}$ are infinitesimally equivalent, i.e., their underlying $\H_\infty$-modules are equivalent. However, they are not isomorphic as Hilbert space representations; see \cite[Exercise~2.6.1]{Bump1997}.
\end{remark}

Let $\ell$ be an odd positive integer. It follows from \eqref{localreallemma1eq1} that
\begin{equation}\label{localrealeq6}
 A_{\ell/2}f_{\ell/2}^{(k)}=0\quad\Longleftrightarrow\quad k\in\pm\{\ell+1,\ell+3,\ldots\},
\end{equation}
showing that the kernel of $A_{\ell/2}$ is precisely the subrepresentation $\mathcal{D}_\ell$ of $V_{\ell/2}$. In particular, for $s=1/2$, the discrete series representation $\mathcal{D}_1^{\rm hol}$ is the kernel of $A_{1/2}$.

Let $V_{1/2}^{\neq0}$ be the subspace of $V_{1/2}$ spanned by the $f_s^{(k)}$ with $k\neq0$. For $f\in V_{1/2}^{\neq0}$, let $f_s$ be the unique flat section containing $f$, and define
\begin{equation}\label{intrealeq10}
 (\tilde A_{1/2}f)(g):=\lim_{s\to1/2}\zeta(2s)(A_sf_s)(g),\qquad g\in G(\R).
\end{equation}
This is well-defined, because $\lim_{s\to1/2}(A_sf_s)(g)=0$ by \eqref{localreallemma1eq1}. It is easy to see that $\tilde A_{1/2}f\in V_{-1/2}$.
\begin{lemma}\label{localreallemma2}
 Let $\tilde A_{1/2}:V_{1/2}^{\neq0}\to V_{-1/2}$ be the operator defined in \eqref{intrealeq10}. Then the following holds for all even, non-zero integers $k$.
 \begin{enumerate}
  \item
   \begin{equation}\label{localreallemma2eq1}
    \tilde A_{1/2}f_{1/2}^{(k)}=-\frac\pi{|k|}f_{-1/2}^{(k)}.
   \end{equation}
  \item
   \begin{equation}\label{localreallemma2eq2}
    \tilde A_{1/2}(Hf_{1/2}^{(k)})=H(\tilde A_{1/2}f_{1/2}^{(k)}).
   \end{equation}
  \item
   \begin{equation}\label{localreallemma2eq3}
    \tilde A_{1/2}(Rf_{1/2}^{(k)})=\begin{cases}
                                R(\tilde A_{1/2}f_{1/2}^{(k)})&\text{if }k\neq-2,\\
                                0&\text{if }k=-2.
                               \end{cases}
   \end{equation}
  \item
   \begin{equation}\label{localreallemma2eq4}
    \tilde A_{1/2}(Lf_{1/2}^{(k)})=\begin{cases}
                                L(\tilde A_{1/2}f_{1/2}^{(k)})&\text{if }k\neq2,\\
                                0&\text{if }k=2.
                               \end{cases}
   \end{equation}
  \item
   \begin{equation}\label{localreallemma2eq5}
    \tilde A_{1/2}(\varepsilon_-f_{1/2}^{(k)})=\varepsilon_-(\tilde A_{1/2}f_{1/2}^{(k)}).
   \end{equation}
 \end{enumerate}
\end{lemma}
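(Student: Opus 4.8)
My plan is to reduce the entire lemma to part (i), which carries the only genuine computation; parts (ii)--(v) then follow formally by combining the value found in (i) with the weight-shift formulas of Lemma~\ref{RHLVslemma}, applied in both $V_{1/2}$ and $V_{-1/2}$, together with the fact that $\tilde A_{1/2}$ is linear.

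For part (i), I would start from Lemma~\ref{localreallemma1}, writing $A_sf_s^{(k)}=c_k(s)\,f_{-s}^{(k)}$ with $c_k(s)=(-1)^{k/2}\sqrt\pi\,\Gamma(s)\Gamma(s+\tfrac12)/\big(\Gamma(s+\tfrac{1+k}2)\Gamma(s+\tfrac{1-k}2)\big)$, where $\{f_{-s}^{(k)}\}$ is the flat section in $V_{-s}$ through $f_{-1/2}^{(k)}$. The key observation is that at $s=\tfrac12$ exactly one of the two denominator Gamma factors acquires a simple pole, namely $\Gamma(s+\tfrac{1-k}2)$ when $k>0$ and $\Gamma(s+\tfrac{1+k}2)$ when $k<0$ (both reducing to $\Gamma$ at a non-positive integer), so $c_k(s)$ has a simple \emph{zero} there; meanwhile $\zeta(2s)$ has a simple pole with $\zeta(2s)=\tfrac1{2(s-1/2)}+\cdots$. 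Hence $\zeta(2s)c_k(s)$ has a finite limit, which I would extract by multiplying the residue $\tfrac12$ of $\zeta(2s)$ against the leading Taylor coefficient of $c_k(s)$ at $s=\tfrac12$. Using $\mathrm{Res}_{z=-n}\Gamma(z)=(-1)^n/n!$ to compute that coefficient (with $\Gamma(\tfrac12)=\sqrt\pi$, $\Gamma(1)=1$), the $(-1)^{k/2}$ prefactor and the factorials collapse to give exactly $-\pi/|k|$; the case $k<0$ is immediate since $c_k=c_{-k}$. Finally I would note that $f_{-s}^{(k)}\to f_{-1/2}^{(k)}$ pointwise in $g$ as $s\to\tfrac12$ (values of a flat section are continuous in $s$), so the defining limit yields $\tilde A_{1/2}f_{1/2}^{(k)}=-\tfrac\pi{|k|}f_{-1/2}^{(k)}$.

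For (ii) and (v) the weight is preserved by $H$ and reversed by $\varepsilon_-$: since $Hf^{(k)}=kf^{(k)}$ in both $V_{1/2}$ and $V_{-1/2}$, and $\varepsilon_-f^{(k)}=f^{(-k)}$ in both, linearity of $\tilde A_{1/2}$ and part (i) give the two identities directly; these are pure bookkeeping. Parts (iii) and (iv) are where the interesting phenomenon lies. Using $Rf_s^{(k)}=(s+\tfrac{1+k}2)f_s^{(k+2)}$, I would apply $R$ first at $s=\tfrac12$ and then $\tilde A_{1/2}$ via (i), and compare with applying $\tilde A_{1/2}$ first and then $R$ at $s=-\tfrac12$. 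The two resulting scalars differ only through $\tfrac{k+2}{|k+2|}$ versus $\tfrac{k}{|k|}$, i.e.\ through $\sgn(k+2)$ versus $\sgn(k)$, and these agree for every even $k\neq 0,-2$; the operator $L$ is handled symmetrically with $k-2$ in place of $k+2$.

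The single obstruction, and the conceptual heart of the lemma, is $k=-2$ for $R$ (and $k=2$ for $L$). Here the raising coefficient $s+\tfrac{1+k}2$ vanishes precisely at $s=\tfrac12$, so $Rf_{1/2}^{(-2)}=0$ and thus $\tilde A_{1/2}(Rf_{1/2}^{(-2)})=0$, whereas the same coefficient at $s=-\tfrac12$ equals $-1\neq0$, so $R(\tilde A_{1/2}f_{1/2}^{(-2)})\neq0$. This asymmetry between the $s=\tfrac12$ and $s=-\tfrac12$ evaluations is exactly the failure of intertwining recorded in the two-case formulas. I expect the main effort to be the residue extraction in part (i); everything afterward is organized sign-checking.
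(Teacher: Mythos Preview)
Your proposal is correct and follows essentially the same approach as the paper: part~(i) is obtained from Lemma~\ref{localreallemma1} together with the residue formula $\mathrm{Res}_{z=-n}\Gamma(z)=(-1)^n/n!$, and parts~(ii)--(v) are then deduced from (i) and the weight-shift formulas of Lemma~\ref{RHLVslemma}, with the special cases $k=\pm 2$ handled via $Rf_{1/2}^{(-2)}=0$ and $Lf_{1/2}^{(2)}=0$. Your write-up is more detailed than the paper's (which is quite terse), but the underlying argument is the same.
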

\begin{proof}
Property \eqref{localreallemma2eq1} follows from \eqref{localreallemma1eq1}, observing that the $\Gamma$-function has residue $(-1)^n/(n!)$ at $s=-n$ for a positive integer $n$. Property \eqref{localreallemma2eq2} is immediate from \eqref{localreallemma2eq1} and \eqref{RHLVslemmaeq2}. Property \eqref{localreallemma2eq5} is immediate from \eqref{localreallemma2eq1} and \eqref{RHLVslemmaeq4}. Equation \eqref{localreallemma2eq3} holds for $k=-2$, because $Rf_{1/2}^{(-2)}=0$. For $k\geq2$ or $k\leq-4$ it follows from \eqref{localreallemma2eq1} and \eqref{RHLVslemmaeq1}. Equation \eqref{localreallemma2eq4} holds for $k=2$, because $Lf_{1/2}^{(2)}=0$. For $k\geq4$ or $k\leq-2$ it follows from \eqref{localreallemma2eq1} and \eqref{RHLVslemmaeq3}.
\end{proof}

It follows from Lemma~\ref{localreallemma2} that if we compose $\tilde A:V_{1/2}^{\neq0}\to V_{-1/2}$ with the projection $V_{-1/2}\to V_{-1/2}/\C\cong\mathcal{D}_1^{\rm hol}$, then we obtain an $\H_\infty$-isomorphism $V_{1/2}^{\neq0}\to\mathcal{D}_1^{\rm hol}$.
\subsection{Global case}\label{globalsec}
We now consider the global $\H$-module $V_s\cong\bigotimes V_{s,v}$.
We would like to define a global intertwining operator $A_s:V_s\to V_{-s}$ by the integral
\begin{equation}\label{globaleq2}
 (A_sf)\left(g\right)=\int\limits_\A f(\begin{bsmallmatrix}&-1\\1\end{bsmallmatrix}\begin{bsmallmatrix}1&b\\&1\end{bsmallmatrix}g)\,db.
\end{equation}
To investigate convergence, assume that $f$ corresponds to a pure tensor $\otimes f_v$, and that $g=(g_v)_v$. Then
\begin{equation}\label{globaleq3}
 (A_sf)\left(g\right)=\prod_v\:\int\limits_{\Q_v} f_v(\begin{bsmallmatrix}&-1\\1\end{bsmallmatrix}\begin{bsmallmatrix}1&b\\&1\end{bsmallmatrix}g_v)\,db.
\end{equation}
Let $T$ be a finite set of finite places such that $f_p=f_p^{\rm sph}$ for $p\notin T$; such a set $T$ exists by definition of the restricted tensor product. Then
\begin{align}\label{globaleq4}
 (A_sf)\left(g\right)&=\bigg(\prod_{v\in T\cup\{\infty\}}\:\int\limits_{\Q_v} f_v(\begin{bsmallmatrix}&-1\\1\end{bsmallmatrix}\begin{bsmallmatrix}1&b\\&1\end{bsmallmatrix}g_v)\,db\bigg)\bigg(\prod_{p\notin T}\:\int\limits_{\Q_p}f^{\rm sph}_{s,p}(\begin{bsmallmatrix}&-1\\1\end{bsmallmatrix}\begin{bsmallmatrix}1&b\\&1\end{bsmallmatrix}g_p)\,db\bigg)\nonumber\\
 &\stackrel{\eqref{localpadiceq5}}{=}\bigg(\prod_{v\in T\cup\{\infty\}}\:(A_{s,v}f_v)(g_v)\bigg)\bigg(\prod_{p\notin T}\frac{1-p^{-2s-1}}{1-p^{-2s}}f^{\rm sph}_{-s,p}(g_p)\bigg).
\end{align}
We see from properties of the Riemann zeta function that the infinite product converges if ${\rm Re}(s)>1/2$. Since every element of $V_s$ is a sum of pure tensors, we conclude that the integral in \eqref{globaleq2} converges provided that ${\rm Re}(s)>1/2$, and that in this region it defines an intertwining operator $A_s:V_s\to V_{-s}$.

We may rewrite \eqref{globaleq4} as
\begin{equation}\label{globaleq5}
 (A_sf)\left(g\right)=\frac{\zeta(2s)}{\zeta(2s+1)}((A_{s,\infty}f_\infty)(g_\infty))\bigg(\prod_{p\in T}\:\frac{1-p^{-2s}}{1-p^{-2s-1}}(A_{s,p}f_p)(g_p)\bigg)\bigg(\prod_{p\notin T}f^{\rm sph}_{-s,p}(g_p)\bigg).
\end{equation}
If $f=f_s=\otimes f_{s,v}$ varies in a flat section, then the only pole in the region ${\rm Re}(s)>0$ of the right hand side of \eqref{globaleq5} is at $s=1/2$, coming from the factor $\zeta(2s)$. Thus $(A_sf_s)(g)$ admits an analytic continuation to this region, with at most one simple pole at $s=1/2$. The other possible poles on $\C$ can also be determined from \eqref{globaleq5}, but we will have no need for this discussion. (Among these, the most intractible ones come from the zeros of $\zeta(2s+1)$, which is why it is tempting to normalize the intertwining operator by multiplying by this factor.)
\subsection{The spaces \texorpdfstring{$V_{1/2}'$}{} and \texorpdfstring{$V_{1/2}''$}{}}\label{V12psec}
Recall from the previous section that in ${\rm Re}(s)>0$ the global intertwining operator has only one possible pole at $s=1/2$. The next result shows that for most $f$ there is actually no pole. For $f\in V_{1/2}$, let $f_s$ be the flat section containing $f$, and define
\begin{equation}\label{V12peq1}
 (A_{1/2}f)(g):=\lim_{s\to1/2}(A_sf_s)(g),\qquad g\in G(\A),
\end{equation}
provided this limit exists. Let $V_{1/2}'$ be the subspace of $f\in V_{1/2}$ for which the limit \eqref{V12peq1} exists for all $g\in G(\A)$. In the following proof we will utilize the subspaces $U_k$ of $V_{1/2}$ defined by
\begin{equation}\label{Ukdefeq}
 U_k:=\C f_{1/2}^{(k)}\otimes\Big(\bigotimes_{p<\infty}V_{1/2,p}\Big)
\end{equation}
for $k\in2\Z$. Here $f_{1/2}^{(k)}$ is the normalized weight-$k$ function in $V_{1/2,\infty}$; see \eqref{localrealeq2}. Note that $V_{1/2}=\bigoplus_{k\in2\Z}U_k$.

\begin{proposition}\label{V12pprop}
 The space $V_{1/2}'$ is invariant under the action of $G(\A_{\rm fin})$. The space $V_{1/2}/V_{1/2}'$ is one-dimensional and carries the trivial representation of $G(\A_{\rm fin})$.
\end{proposition}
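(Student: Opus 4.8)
The plan is to reduce everything to the single explicit formula \eqref{globaleq5} for $(A_sf_s)(g)$, together with the fact that in the region ${\rm Re}(s)>0$ its only singularity is the simple pole of $\zeta(2s)$ at $s=1/2$. I would first exploit the decomposition $V_{1/2}=\bigoplus_{k\in2\Z}U_k$. For a flat section through an element of $U_k$ the archimedean factor in \eqref{globaleq5} is $A_{s,\infty}f_s^{(k)}$, and by Lemma~\ref{localreallemma1} together with \eqref{localrealeq6} this factor has a simple zero at $s=1/2$ whenever $k\neq0$ (the pole of $\Gamma(s+(1\mp k)/2)$ forces the vanishing), whereas for $k=0$ it is nonzero by \eqref{localreallemma1eq2}. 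Since the remaining finite-place factors are regular at $s=1/2$ (the $p$-adic operators $A_{s,p}$ are holomorphic there because $p^{2s}=p\neq1$, and $\zeta(2s+1)=\zeta(2)$), the pole of $\zeta(2s)$ is cancelled on every $U_k$ with $k\neq0$. Thus $\bigoplus_{k\neq0}U_k\subseteq V_{1/2}'$, and the existence of the limit \eqref{V12peq1} is entirely concentrated in the weight-zero part $U_0$.

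Next I would make the residue on $U_0$ explicit. For $f\in U_0$ set $\mathcal{R}(f):=\operatorname{Res}_{s=1/2}(A_sf_s)$. Reading off \eqref{globaleq5}, and using that every factor except $\zeta(2s)$ is holomorphic at $s=1/2$, one finds that on a pure tensor $\mathcal{R}$ equals a nonzero scalar times $A_{1/2,\infty}f_\infty\otimes\bigotimes_p\tilde A_pf_p$, where $\tilde A_p=(1+p^{-1})^{-1}A_{1/2,p}$. The expression is independent of the auxiliary set $T$ precisely because $\tilde A_p$ carries the spherical vector to the distinguished vector of $V_{-1/2,p}$, so that almost all tensor factors are distinguished. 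By linearity $\mathcal{R}$ is therefore a scalar multiple of the tensor product of local intertwining operators, and an element $f\in V_{1/2}$ lies in $V_{1/2}'$ if and only if $\mathcal{R}(f^{(0)})=0$, where $f^{(0)}$ denotes the $U_0$-component of $f$.

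From this description both assertions follow. The map $f\mapsto\mathcal{R}(f^{(0)})$ is $G(\A_{\rm fin})$-equivariant, being the composite of the equivariant projection $V_{1/2}\to U_0$ (the finite-adelic action fixes the archimedean weight and hence preserves each $U_k$) with the tensor product of the $G(\Q_p)$-intertwining maps $\tilde A_p$; consequently its kernel $V_{1/2}'$ is $G(\A_{\rm fin})$-invariant. For the quotient I would compute $\operatorname{image}(\mathcal{R})$: by the exact sequences \eqref{padicexacteq3} and \eqref{padicexacteq4} each $A_{1/2,p}$ has kernel $\St_{\GL(2,\Q_p)}$ and one-dimensional image equal to the trivial subrepresentation $\triv_{\GL(2,\Q_p)}\subseteq V_{-1/2,p}$, while $A_{1/2,\infty}$ is injective on the weight-zero line. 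Hence $\operatorname{image}(\mathcal{R})=\C f_{-1/2}^{(0)}\otimes\bigotimes_p\triv_{\GL(2,\Q_p)}$ is one-dimensional and carries the trivial representation of $G(\A_{\rm fin})$. Since the isomorphism $V_{1/2}/V_{1/2}'\cong U_0/\ker(\mathcal{R}|_{U_0})\cong\operatorname{image}(\mathcal{R})$ is $G(\A_{\rm fin})$-equivariant, this is exactly the claim.

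I expect the main obstacle to be the second step: justifying that the residue $\mathcal{R}$, defined through flat sections, genuinely coincides with the clean tensor product of local intertwining operators, and that this identification respects the $G(\A_{\rm fin})$-action. The delicate point, flagged in the Remark following \eqref{localpadiceq3}, is that $\pi_s(h)f_s$ need not be the flat section through $\pi_{1/2}(h)f$, so equivariance cannot be taken for granted at the level of sections; it has to be recovered a posteriori from the explicit residue formula, in which the local operators appear intrinsically and the equivariance becomes manifest.
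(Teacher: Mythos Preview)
Your proof is correct and takes a somewhat different route from the paper's. Both arguments begin identically, using \eqref{globaleq5} and Lemma~\ref{localreallemma1} to show $\bigoplus_{k\neq0}U_k\subset V_{1/2}'$. From there the paper works by hand: it uses the vector-space splitting $V_{1/2,p}=\C f_{1/2,p}^{\rm sph}\oplus\mathcal{D}_p$ to write $U_0=\C f_{1/2}^{\rm sph}\oplus U_0'$, where $U_0'$ is the span of the spaces $U_{0,N}$ for $N>1$, checks $U_{0,N}\subset V_{1/2}'$ because some local $A_{1/2,p}$ vanishes, and then argues invariance and triviality of the quotient by observing that each $G(\Q_p)$ acts trivially on $V_{1/2,p}/\mathcal{D}_p$. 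You instead realize $V_{1/2}'$ as the kernel of a single $G(\A_{\rm fin})$-equivariant map: the residue $\mathcal{R}$ is identified, via \eqref{globaleq5}, with a scalar multiple of the tensor product of local intertwiners $A_{1/2,\infty}\otimes\bigotimes_p\tilde A_p$, so invariance of $\ker\mathcal{R}$ and triviality of $V_{1/2}/V_{1/2}'\cong\operatorname{image}\mathcal{R}$ come for free from the intertwining property of each $A_{1/2,p}$ and the fact that $\operatorname{image}A_{1/2,p}$ is the trivial subrepresentation of $V_{-1/2,p}$. Your packaging is cleaner and avoids the auxiliary spaces $U_{0,N}$; the paper's version is more explicit about exhibiting a complement to $V_{1/2}'$. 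The obstacle you flagged---that translation does not commute with the flat-section construction---is real, and your resolution (compute the residue first, then recognize the local intertwiners, for which equivariance is already established in \eqref{localpadiceq1}--\eqref{localpadiceq2}) is exactly right.
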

\begin{proof}
It follows from \eqref{globaleq5} and Lemma~\ref{localreallemma1} that $U_k\subset V_{1/2}'$ if $k\neq0$. For a square-free, positive integer $N$, let
\begin{equation}\label{V12ppropeq1}
 U_{0,N}=\C f_{1/2}^{(0)}\otimes\Big(\bigotimes_{p\mid N}\mathcal{D}_p\Big)\otimes\Big(\bigotimes_{p\nmid N}V_{1/2,p}\Big).
\end{equation}
where we recall $\mathcal{D}_p$ is the infinite-dimensional invariant subspace of $V_{1/2,p}$. Since $\mathcal{D}_p$ is the kernel of $A_{1/2,p}$, it follows from \eqref{globaleq5} that $U_{0,N}\subset V_{1/2}'$ if $N>1$. As vector spaces (but not as $\H_p$-modules) we have $V_{1/2,p}=\C f_{1/2,p}^{\rm sph}\oplus\mathcal{D}_p$. It follows that any element of $U_0$ can be written as a multiple of $f_{1/2}^{\rm sph}:=f_{1/2}^{(0)}\otimes\big(\otimes_{p<\infty}f_{1/2,p}^{\rm sph}\big)$ plus elements of $U_{0,N}$ for various $N>1$. In other words, if $U_0'$ is the sum of all spaces $U_{0,N}$ for all $N>1$, then $U_0=\C f_{1/2}^{\rm sph}\oplus U_0'$. (The sum is direct because $U_0'\subset V_{1/2}'$ and $f_{1/2}^{\rm sph}\notin V_{1/2}'$.) Altogether it follows that
\begin{equation}\label{V12ppropeq3}
 V_{1/2}=\C f_{1/2}^{\rm sph}\oplus U_0'\oplus\bigoplus_{\substack{k\in2\Z\\k\neq0}}U_k.
\end{equation}
It is now clear that
\begin{equation}\label{V12ppropeq4}
 V_{1/2}'=U_0'\oplus\bigoplus_{\substack{k\in2\Z\\k\neq0}}U_k.
\end{equation}
Since every $U_k$ and every $U_{0,N}$ for $N>1$ is $G(\A_{\rm fin})$-invariant, so is $V_{1/2}'$. Since $G(\Q_p)$ acts trivially on $V_{1/2,p}/\mathcal{D}_p$, it follows that $G(\Q_p)$ acts trivially on
\begin{equation}\label{V12ppropeq5}
 V_{1/2}\Big/\bigg(V_{1/2,\infty}\otimes\mathcal{D}_p\otimes\Big(\bigotimes_{p'\neq p}V_{1/2,p'}\Big)\bigg),
\end{equation}
and hence on $V_{1/2}/V_{1/2}'$.
\end{proof}

More important for us than $V'_{1/2}$ will be its subspace
\begin{equation}\label{V12ppropeq6}
 V_{1/2}'':=\bigoplus_{\substack{k\in2\Z\\k\neq0}}U_k=\mathcal{D}_1^{\rm hol}\otimes\bigg(\bigotimes_{p<\infty}V_{1/2,p}\bigg).
\end{equation}
For this space we can prove that $A_{1/2}$ has the following intertwining properties.

\begin{proposition}\label{V12pintprop}
 Let $f\in V_{1/2}''$.
  \begin{enumerate}
  \item For $h\in G(\A_{\rm fin})$, let $f^h(g)=f(gh)$. Then
   \begin{equation}\label{V12pintpropeq1}
    (A_{1/2}f^h)(g)=(A_{1/2}f)(gh)\qquad\text{for }h\in G(\A_{\rm fin}).
   \end{equation}
  \item If $f\in U_k$ with $k\neq0$, then
   \begin{align}
    \label{V12pintpropeq2}A_{1/2}(Hf)&=H(A_{1/2}f),\\
    \label{V12pintpropeq3}A_{1/2}(Rf)&=\begin{cases}
                 R(A_{1/2}f)&\text{if }k\neq-2,\\
                 0&\text{if }k=-2,
                \end{cases}\\
    \label{V12pintpropeq4}A_{1/2}(Lf)&=\begin{cases}
                 L(A_{1/2}f)&\text{if }k\neq2,\\
                 0&\text{if }k=2,
                \end{cases}\\
    \label{V12pintpropeq5}A_{1/2}(\varepsilon_-)&=\varepsilon_-(A_{1/2}f).
   \end{align}
 \end{enumerate}
\end{proposition}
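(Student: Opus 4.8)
The plan is to reduce both assertions to the already-established local statements by first recording an explicit factorization of $A_{1/2}$ on pure tensors in $V_{1/2}''$. Fix a pure tensor $f=f_\infty\otimes\bigotimes_p f_p\in U_k$ with $k\neq0$, and let $T$ be a finite set of finite places outside of which $f_p=f^{\rm sph}_{1/2,p}$. Starting from the analytically continued expression \eqref{globaleq5}, I would pass to the limit $s\to1/2$ termwise. The archimedean factor $\zeta(2s)(A_{s,\infty}f_\infty)(g_\infty)$ tends to $(\tilde A_{1/2}f_\infty)(g_\infty)$ by the very definition \eqref{intrealeq10}, the pole of $\zeta(2s)$ being cancelled by the zero of $A_{s,\infty}f_\infty$ that Lemma~\ref{localreallemma1} guarantees for $k\neq0$; the finitely many remaining factors converge to their values at $s=1/2$. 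This yields
\[
 (A_{1/2}f)(g)=\frac1{\zeta(2)}(\tilde A_{1/2}f_\infty)(g_\infty)\Big(\prod_{p\in T}\frac{1-p^{-1}}{1-p^{-2}}(A_{1/2,p}f_p)(g_p)\Big)\Big(\prod_{p\notin T}f^{\rm sph}_{-1/2,p}(g_p)\Big).
\]
I would note that this formula is independent of the choice of $T$: enlarging $T$ moves a spherical factor from the third product to the second, and by \eqref{localpadiceq5} at $s=1/2$ the two expressions for such a prime agree. Since $A_{1/2}$ and all operators in the statement are linear, it suffices to argue on such pure tensors.

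For part (i), given $h=(h_p)_p\in G(\A_{\rm fin})$ I would enlarge $T$ so that it also contains every prime with $h_p\notin K_p$. Applying the displayed formula to $f^h=f_\infty\otimes\bigotimes_p f_p^{h_p}$, the archimedean factor is untouched since $h$ has trivial component at $\infty$; for $p\in T$ the local intertwining property \eqref{localpadiceq3}, valid at $s=1/2$ because $p^{2\cdot1/2}=p\neq1$, replaces $(A_{1/2,p}f_p^{h_p})(g_p)$ by $(A_{1/2,p}f_p)(g_ph_p)$; and for $p\notin T$ both $f_p$ and $f^{\rm sph}_{-1/2,p}$ are $K_p$-fixed while $h_p\in K_p$, so these factors are unchanged upon replacing $g_p$ by $g_ph_p$. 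Assembling these observations gives $(A_{1/2}f^h)(g)=(A_{1/2}f)(gh)$, which is \eqref{V12pintpropeq1}.

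For part (ii), the key point is that $H,R,L$ and $\varepsilon_-$ act only on the archimedean tensor factor, while the finite part of the factorization above depends only on $\bigotimes_p f_p$, not on the archimedean weight. Thus for $f=f_\infty^{(k)}\otimes\phi\in U_k$ each of $Hf,Rf,Lf,\varepsilon_-f$ carries the same finite part as $f$, with $f_\infty^{(k)}$ merely replaced by $Hf_\infty^{(k)},Rf_\infty^{(k)},Lf_\infty^{(k)},\varepsilon_-f_\infty^{(k)}$; in particular $Rf_{1/2}^{(-2)}=0$ by \eqref{RHLVslemmaeq1} and $Lf_{1/2}^{(2)}=0$ by \eqref{RHLVslemmaeq3}, so those images vanish. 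Feeding the archimedean factor through $\tilde A_{1/2}$ and invoking the five identities of Lemma~\ref{localreallemma2} then produces \eqref{V12pintpropeq2}--\eqref{V12pintpropeq5} verbatim, since $H,R,L,\varepsilon_-$ act only on the archimedean factor of $A_{1/2}f$ as well.

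I expect the only genuine work to lie in the first paragraph: confirming that the simple pole of $\zeta(2s)$ in \eqref{globaleq5} is matched exactly by the archimedean zero, so that the termwise limit exists and is precisely $\tilde A_{1/2}$ at $\infty$ times honest finite intertwining operators. Once this factorization is secured, parts (i) and (ii) follow formally from the local intertwining property at the finite places and from Lemma~\ref{localreallemma2} at the archimedean place, respectively.
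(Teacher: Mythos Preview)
Your proposal is correct and follows essentially the same route as the paper: derive the factorization \eqref{V12pintpropeq8} by taking $s\to1/2$ in \eqref{globaleq5} (your extra factor $\prod_{p\notin T}f^{\rm sph}_{-1/2,p}(g_p)$ is the constant function $1$, so your formula agrees with the paper's), then deduce (i) from the local intertwining property at the finite places and (ii) from Lemma~\ref{localreallemma2}. Your additional remarks on independence of $T$ and on the vanishing of $Rf_{1/2}^{(-2)}$, $Lf_{1/2}^{(2)}$ are fine elaborations but not needed beyond what the paper already invokes.
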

\begin{proof}
It is obvious from \eqref{V12ppropeq6} that $f^h\in V_{1/2}''$. We may assume that $f=\otimes f_v$ is a pure tensor. Let $g=(g_p)$ and $h=(h_p)$. We may also assume that $f_\infty=f_{1/2}^{(k)}$ with $k\neq0$, i.e., $f\in U_k$. Letting $s$ go to $1/2$ in \eqref{globaleq5}, we see that for every large enough set $T$ of finite places,
\begin{equation}\label{V12pintpropeq8}
 (A_{1/2}f)\left(g\right)=\frac6{\pi^2}(\tilde A_{1/2,\infty}f_{1/2}^{(k)})(g_\infty)\bigg(\prod_{p\in T}\:\frac{1-p^{-1}}{1-p^{-2}}(A_{1/2,p}f_p)(g_p)\bigg),
\end{equation}
where $\tilde A_{1/2,\infty}f_{1/2}^{(k)}$ is the function defined in \eqref{intrealeq10}. In this form the intertwining property \eqref{V12pintpropeq1} follows because each $A_{1/2,p}$ is an intertwining operator; we just have to choose a set $T$ large enough so that it works for both $f$ and~$f^h$. Properties \eqref{V12pintpropeq2}--\eqref{V12pintpropeq5} follow from \eqref{localreallemma2eq2}--\eqref{localreallemma2eq5}.
\end{proof}

For later use, we note that \eqref{V12pintpropeq8}, in conjunction with \eqref{localreallemma2eq1}, gives the formula
\begin{equation}\label{V12pintpropeq9}
 (A_{1/2}f)\left(g\right)=-\frac6{\pi|k|}f_{-1/2}^{(k)}(g_\infty)\bigg(\prod_{p\in T}\:\frac{1}{1+p^{-1}}(A_{1/2,p}f_p)(g_p)\bigg),
\end{equation}
whenever $f=\otimes f_v\in V_{1/2}$ with $f_\infty=f_{1/2}^{(k)}$ and $T$ is such that $f_p=f_{1/2,p}^{\rm sph}$ and $g_p\in K_p$ for $p\notin T$.
\section{Whittaker integrals} \label{Sect:Whittaker}
In this section we study local and global Whittaker integrals, proceeding analogously to the study of the intertwining operators in the previous section.
\subsection{Non-archimedean case}\label{whitpadic}
Let $F$ be a non-archimedean local field of characteristic zero. We use the same notations as in Sect.~\ref{interpadic}. We fix a character $\psi$ of $F$ of conductor $\OF$. For $\alpha\in F^\times$, we let $\psi^\alpha(x)=\psi(\alpha x)$. For $f\in V_s$, we define the \emph{$\psi^\alpha$-Whittaker function} associated to $f$ by
\begin{equation}\label{localpadicWeq2}
 (W_s^\alpha f)(g):=\lim_{n\to\infty}\int\limits_{\p^{-n}}f(\begin{bsmallmatrix}&-1\\1\end{bsmallmatrix}\begin{bsmallmatrix}1&b\\&1\end{bsmallmatrix}g)\psi(-\alpha b)\,db.
\end{equation}
The sequence given by the integrals stabilizes, and hence the limit exists for all $s$. Provided that ${\rm Re}(s)>0$, it is easy to see that
\begin{equation}\label{localpadicWeq1}
 (W_s^\alpha f)\left(g\right)=\int\limits_Ff(\begin{bsmallmatrix}&-1\\1\end{bsmallmatrix}\begin{bsmallmatrix}1&b\\&1\end{bsmallmatrix}g)\psi(-\alpha b)\,db.
\end{equation}
It follows from \eqref{localpadicWeq2} that, for any $s\in\C$,
\begin{equation}\label{localpadicWeq10}
 (W_s^\alpha f)(\mat{y}{}{}{1}g)=|y|^{1/2-s}\,(W_s^{\alpha y}f)(g)
\end{equation}
for all $g\in G$ and $y,\alpha\in F^\times$.

\begin{lemma}\label{calphaplemma}
 Suppose that $q^{2s}\neq1$. Let $\alpha,y\in F^\times$.
 \begin{enumerate}
  \item If $f_s^{\rm sph}\in V_s$ is the normalized spherical vector, then
   \begin{equation}\label{calphaplemmaeq3}
    (W_s^\alpha f_s^{\rm sph})(\mat{y}{}{}{1})
    =\begin{cases}
      \displaystyle\frac{(|\alpha|^{2s}|y|^{1/2+s}-q^{2s}|y|^{1/2-s})(1-q^{-2s-1})}{1-q^{2s}}&\text{if }v(\alpha y)\geq0,\\
      0&\text{if }v(\alpha y)<0.
     \end{cases}
   \end{equation}   
  \item If $f_s^{\rm St}\in V_s$ is the Steinberg vector defined in \eqref{localpadiceq4}, then
   \begin{equation}\label{calphaplemmaeq4}
    (W_s^\alpha f_s^{\rm St})(\mat{y}{}{}{1})
    =\begin{cases}
      \displaystyle\frac{(1-q^{-2s-1})|\alpha|^{2s}|y|^{1/2+s}-(1-q^{2s-1})|y|^{1/2-s}}{1-q^{2s}}&\text{if }v(\alpha y)\geq0,\\
      0&\text{if }v(\alpha y)<0.
     \end{cases}
   \end{equation}   
 \end{enumerate}
\end{lemma}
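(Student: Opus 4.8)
The plan is to compute the Whittaker integral \eqref{localpadicWeq1} directly for $\mathrm{Re}(s)>0$ using the Iwasawa decomposition, and then observe that both sides of \eqref{calphaplemmaeq3} and \eqref{calphaplemmaeq4} are expressions whose equality, once established for $\mathrm{Re}(s)>0$, extends to all $s$ with $q^{2s}\neq 1$ by the stabilization property recorded after \eqref{localpadicWeq2}. Thanks to the scaling relation \eqref{localpadicWeq10}, it in fact suffices to treat the case $y=1$: setting $g=\mat{y}{}{}{1}$ and using $(W_s^\alpha f)(\mat{y}{}{}{1})=|y|^{1/2-s}(W_s^{\alpha y}f)(1)$, the full formula in the variable $y$ is recovered from the $y=1$ computation with $\alpha$ replaced by $\alpha y$. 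So the core task is to evaluate $\int_F f(\mat{}{-1}{1}{}\mat{1}{b}{}{1})\psi(-\alpha b)\,db$ for the two distinguished vectors.

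\textbf{First I would} handle the spherical vector. Writing $\mat{}{-1}{1}{}\mat{1}{b}{}{1}=\mat{b^{-1}}{-1}{}{b}\mat{1}{}{b^{-1}}{1}$ (the identity already used after \eqref{localpadiceq1}), one sees that for $v(b)<0$ the lower-triangular factor $\mat{1}{}{b^{-1}}{1}$ lies in $K$, so $f_s^{\rm sph}$ evaluates via \eqref{indreptrafoeq2} to $|b^{-1}/b|^{s+1/2}=|b|^{-2s-1}$, while for $v(b)\geq 0$ the whole element lies in $B\cdot K$ with $f_s^{\rm sph}$ evaluating to $1$. I would therefore break the integral $\int_F\cdots db$ into the two regions $v(b)\geq 0$ and $v(b)<0$. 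On $v(b)\geq 0$ the integrand is $\psi(-\alpha b)$, whose integral over $\OF$ is $1$ if $v(\alpha)\geq 0$ and $0$ otherwise (conductor $\OF$). On $v(b)<0$ the integrand is $|b|^{-2s-1}\psi(-\alpha b)$; summing over shells $v(b)=-n$, $n\geq 1$, each shell contributes $q^n(q^n-q^{n-1})$-weighted by the Gauss-sum value of $\int_{v(b)=-n}\psi(-\alpha b)\,db$, which is $q^n(1-q^{-1})$ when $v(\alpha)\geq n$, equals $-q^{v(\alpha)}$ in the boundary case, and vanishes for smaller $v(\alpha)$. Assembling these geometric sums in $q^{-2s}$ and combining with the $v(b)\geq 0$ contribution yields, after collecting the factor $(1-q^{-2s-1})$, exactly the closed form in \eqref{calphaplemmaeq3}; the vanishing for $v(\alpha)<0$ comes out automatically from the region analysis.

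\textbf{Then} the Steinberg case follows with no new integration: by \eqref{localpadiceq4} the vector $f_s^{\rm St}$ is an explicit linear combination of $f_s^{\rm sph}$ and $\mat{1}{}{}{\varpi}f_s^{\rm sph}$, and $W_s^\alpha$ is linear, so $(W_s^\alpha f_s^{\rm St})(\mat{y}{}{}{1})$ is the same linear combination of $(W_s^\alpha f_s^{\rm sph})(\mat{y}{}{}{1})$ and $(W_s^\alpha \mat{1}{}{}{\varpi}f_s^{\rm sph})(\mat{y}{}{}{1})$. The translate $\mat{1}{}{}{\varpi}f_s^{\rm sph}$ can be pushed through using right-translation invariance of the integral together with \eqref{localpadicWeq10} (effectively shifting $y$ by $\varpi$ and adjusting the spherical value), so that the second term reduces to the already-computed spherical Whittaker function evaluated at a shifted argument. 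Substituting the formula \eqref{calphaplemmaeq3} twice and simplifying the coefficients $\tfrac{1}{1-q^{2s+1}}\big((1+q^{2s})-q^{s-1/2}(q+1)(\cdots)\big)$ should collapse to the clean numerator $(1-q^{-2s-1})|\alpha|^{2s}|y|^{1/2+s}-(1-q^{2s-1})|y|^{1/2-s}$ of \eqref{calphaplemmaeq4}.

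\textbf{The main obstacle} I anticipate is purely bookkeeping: getting the boundary shell $v(b)=-v(\alpha)$ right in the spherical computation (where the partial Gauss sum gives $-q^{v(\alpha)}$ rather than the ``full'' value) and then carrying the resulting constants cleanly through the Steinberg linear combination, where several $q$-powers must cancel to produce the stated symmetric-looking numerator. No conceptual difficulty arises—the integrals are elementary and the analytic continuation is free—so the proof is a controlled but slightly delicate evaluation of geometric series, best organized by first nailing down \eqref{calphaplemmaeq3} at $y=1$ and invoking \eqref{localpadicWeq10} and linearity for everything else.
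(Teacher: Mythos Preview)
Your approach is correct and, for the spherical vector, essentially matches the paper's: reduce to $y=1$ via \eqref{localpadicWeq10}, split the integral over $v(b)\geq0$ and $v(b)<0$, and sum the resulting geometric series. The vanishing for $v(\alpha)<0$ indeed falls out of the shell analysis as you describe.

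Where you differ is in the Steinberg case. You propose to express $f_s^{\rm St}$ via \eqref{localpadiceq4} as a combination of $f_s^{\rm sph}$ and its right translate by $\mat{1}{}{}{\varpi}$, then reduce the translated Whittaker value to the spherical one at a shifted argument. This works, but the paper takes a cleaner route: it observes that both $f_s^{\rm sph}$ and $f_s^{\rm St}$ are $\Gamma_0(\p)$-invariant, and carries out the shell computation once for a general $\Gamma_0(\p)$-invariant $f$, obtaining the single formula
\[
 (W_s^\alpha f)(1)=f(\mat{}{-1}{1}{})+f(1)\,\frac{|\alpha|^{2s}(1-q^{-2s-1})+q^{-1}-1}{1-q^{2s}}\qquad(v(\alpha)\geq0),
\]
depending only on the two values $f(1)$ and $f(\mat{}{-1}{1}{})$. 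Both parts of the lemma then follow by plugging in $(1,1)$ and $(1,-q^{-1})$ respectively. This avoids the translate computation and the algebraic simplification of the coefficients in \eqref{localpadiceq4} that you anticipate as the ``main obstacle''; the paper's organization sidesteps that bookkeeping entirely.
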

\begin{proof}
In view of \eqref{localpadicWeq10}, we may assume that $y=1$. Assume that $f\in V_s$ is $\Gamma_0(\p)$-invariant. Assuming $v(\alpha)\geq0$, a straightforward calculation shows that
\begin{equation}\label{calphaplemmaeq2}
 (W_s^\alpha f)(1)=f(\mat{}{-1}{1}{})+f(1)\,\frac{|\alpha|^{2s}(1-q^{-2s-1})+q^{-1}-1}{1-q^{2s}}.
\end{equation}
Equation \eqref{calphaplemmaeq3} follows by setting $f(1)=f(\mat{}{-1}{1}{})=1$ in \eqref{calphaplemmaeq2}. Equation \eqref{calphaplemmaeq4} follows by setting $f(1)=1$ and $f(\mat{}{-1}{1}{})=-q^{-1}$ in \eqref{calphaplemmaeq2}.
\end{proof}

For $s=1/2$ and $v(\alpha y)\geq0$, the expressions in \eqref{calphaplemmaeq3} and \eqref{calphaplemmaeq4} simplify as follows,
\begin{align}
 \label{localpadicWeq3}(W_{1/2}^\alpha f_{1/2}^{\rm sph})(\mat{y}{}{}{1})&=(1-q^{-1}|\alpha y|)(1+q^{-1}),\\
 \label{localpadicWeq4}(W_{1/2}^\alpha f_{1/2}^{\rm St})(\mat{y}{}{}{1})&=-q^{-1}|\alpha y|(1+q^{-1}).
\end{align}
\subsection{Archimedean case}\label{whitarch}
For $s\in\C$, let $V_s$ be the $\H_\infty$-module considered in Sect.~\ref{interarch}. For $f\in V_s$ and $\alpha\in\R^\times$, we consider the Whittaker function
\begin{equation}\label{localrealWeq1}
 (W_s^\alpha f)(g)=\int\limits_\R f(\begin{bsmallmatrix}&-1\\1\end{bsmallmatrix}\begin{bsmallmatrix}1&b\\&1\end{bsmallmatrix}g)e^{2\pi i\alpha b}\,db.
\end{equation}
We would like to evaluate these integrals if $f=f_s^{(k)}$ is the weight-$k$ function given in \eqref{localrealeq2}. As in the proof of \cite[Thm.~3.7.1]{Bump1997} we find
\begin{equation}\label{localrealWeq2}
 (W_s^\alpha f_s^{(k)})\left(\mat{1}{x}{}{1}\mat{y}{}{}{1}\right)=e^{-2\pi i\alpha x}\,|y|^{1/2-s}\int\limits_\R\bigg(\frac{1}{b^2+1}\bigg)^{s+1/2}\bigg(\frac{b-i}{\sqrt{b^2+1}}\bigg)^ke^{2\pi i\alpha by}\,db
\end{equation}
for ${\rm Re}(s)>0$. Using \cite[3.384.9]{GradshteynRyzhik2015}, we get
\begin{equation}\label{localrealWeq10}
 (W_s^\alpha f_s^{(k)})\left(\mat{1}{x}{}{1}\mat{y}{}{}{1}\right)=e^{-2\pi i\alpha x}\frac{(-1)^{k/2}|\alpha|^{s-1/2}\pi^{s+1/2}}{\Gamma(s+1/2-\sgn(\alpha y)k/2)}W_{-\sgn(\alpha y)k/2,-s}(4\pi|\alpha y|),
\end{equation}
where $W_{*,*}$ is the Whittaker function defined in \cite[9.220]{GradshteynRyzhik2015}. For fixed non-zero $z$, the functions $W_{\kappa,\mu}(z)$ are entire functions of $\kappa$ and $\mu$. It follows that $(W_s^\alpha f_s^{(k)})(g)$ admits analytic continuation to an entire function of $s$.

For $k\geq2$ and $s=\frac{k-1}2$, by \eqref{localrealWeq10} and \cite[13.18.2]{NIST:DLMF},
\begin{equation}\label{localrealWeq5}
 (W_{(k-1)/2}^\alpha f_{(k-1)/2}^{(k)})\left(\mat{1}{x}{}{1}\mat{y}{}{}{1}\right)=
  \begin{cases}
   0&\text{if }\alpha y>0,\\[1ex]
   \displaystyle\frac{(2\pi i)^k}{(k-1)!}|y|^{k/2}|\alpha|^{k-1}e^{-2\pi i\alpha(x+iy)}&\text{if }\alpha y<0.
  \end{cases}
\end{equation}
\subsection{Global case}\label{globalWsec}
We now work over $\Q$, using the same global setup as in Sect.~\ref{globalsec}. Let $\psi$ be the character of $\Q\backslash\A$ defined in Tate's thesis; it has the property that $\psi(x)=\prod\psi_v(x_v)$ with $\psi_\infty(x_\infty)=e^{-2\pi ix_\infty}$. For a finite prime $p$, the character $\psi_p$ of $\Q_p$ is trivial on $\Z_p$ but not on $p^{-1}\Z_p$.

For $f\in V_s$ and $\alpha\in\Q^\times$ we consider the global Whittaker function
\begin{equation}\label{globalWeq1}
 (W_s^\alpha f)(g)=\int\limits_\A f(\begin{bsmallmatrix}&-1\\1\end{bsmallmatrix}\begin{bsmallmatrix}1&b\\&1\end{bsmallmatrix}g)\psi(-\alpha b)\,db.
\end{equation}
To investigate convergence, assume that $f$ corresponds to a pure tensor $\otimes f_v$, and that $g=(g_v)_v$. Then
\begin{equation}\label{globalWeq2}
 (W_s^\alpha f)(g)=\prod_v\:\int\limits_{\Q_v} f_v(\begin{bsmallmatrix}&-1\\1\end{bsmallmatrix}\begin{bsmallmatrix}1&b\\&1\end{bsmallmatrix}g_v)\psi_v(-\alpha b)\,db=\prod_v (W_{s,v}^\alpha f_v)(g_v),
\end{equation}
with the local Whittaker functions $W_{s,v}^\alpha f_v$ defined in \eqref{localpadicWeq1} and \eqref{localrealWeq1}. To ease notation, we will sometimes write $W^\alpha$ instead of $W_{s,v}^\alpha$, if the context is clear.

Let $T$ be a finite set of finite places such that for primes $p\notin T$ the function $f_p$ is the normalized spherical vector, $g_p\in K_p$, and $v_p(\alpha)=0$. Then
\begin{align}\label{globalWeq13}
 (W_s^\alpha f)(g)&=\bigg(\prod_{v\in T\cup\{\infty\}}(W_{s,v}^\alpha f_v)(g_v)\bigg)\bigg(\prod_{p\notin T}(W_{s,p}^\alpha f_p)(1)\bigg)\nonumber\\
 &\stackrel{\eqref{calphaplemmaeq3}}{=}\bigg(\prod_{v\in T\cup\{\infty\}}(W_{s,v}^\alpha f_v)(g_v)\bigg)\bigg(\prod_{p\notin T}(1-p^{-2s-1})\bigg)\nonumber\\
 &=\frac1{\zeta(2s+1)}\,(W_{s,\infty}^\alpha f_\infty)(g_\infty)\bigg(\prod_{p\in T}\frac1{1-p^{-2s-1}}(W_{s,p}^\alpha f_p)(g_p)\bigg).
\end{align}
It follows that \eqref{globalWeq1} converges for ${\rm Re}(s)>0$.

\begin{lemma}\label{alphalatticelemma}
 Let $\alpha\in\Q^\times$ and $f\in V_s$. For fixed $g=(g_p)\in G(\A)$, there exists an integer $M>0$ such that
 \begin{equation}\label{alphalatticelemmaeq1}
  (W_s^\alpha f)(g)=0\qquad\text{if }\alpha\notin M^{-1}\Z.
 \end{equation}
 The integer $M$ depends only on the right-invariance properties of $f$ under the groups $G(\Z_p)$, and on the $g_p$ for $p<\infty$. We may choose $M$ such that it is divisible only by those primes $p$ for which $f$ is not $G(\Z_p)$-invariant or $g_p\notin G(\Z_p)$.
\end{lemma}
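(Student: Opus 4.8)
The plan is to reduce the global statement to a collection of independent local conditions, one for each finite prime, and then assemble $M$ as a product. Concretely, I will show that for every finite prime $p$ there is an integer $m_p\geq0$, equal to $0$ whenever $f$ is right $G(\Z_p)$-invariant and $g_p\in G(\Z_p)$, such that $(W_s^\alpha f)(g)=0$ unless $v_p(\alpha)\geq -m_p$. Since $f$ is a smooth vector it is right $G(\Z_p)$-invariant for all but finitely many $p$, and since $g\in G(\A)$ we have $g_p\in G(\Z_p)$ for all but finitely many $p$; hence $m_p=0$ for almost all $p$ and $M:=\prod_p p^{m_p}$ is a well-defined positive integer whose prime divisors lie among the ``bad'' primes. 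The condition $\alpha\in M^{-1}\Z$ is precisely $v_p(\alpha)\geq -v_p(M)=-m_p$ for all $p$, so the per-prime vanishing statements together give the lemma. (For a good prime with $f$ spherical this is already visible from \eqref{calphaplemmaeq3}, which vanishes once $v_p(\alpha)<0$; the argument below is the general, invariance-based version, and it bypasses any reduction to pure tensors.)

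The key step is a translation of the integration variable in \eqref{globalWeq1} by an adele supported at $p$. Fix $p$, let $\Gamma_p$ be a compact open subgroup of $G(\Q_p)$ under which $f$ is right-invariant at $p$, and let $c\in\A$ be the adele with $p$-component $c_p\in\Q_p$ and all other components zero. Because Haar measure on $\A$ is translation invariant, substituting $b\mapsto b+c$ in \eqref{globalWeq1}, using $\mat{1}{b+c}{}{1}=\mat{1}{b}{}{1}\mat{1}{c}{}{1}$ and $\psi(-\alpha c)=\psi_p(-\alpha c_p)$ (the product $\psi=\prod_v\psi_v$ collapses since $c$ is supported at $p$), gives
\[
 (W_s^\alpha f)(g)=\psi_p(-\alpha c_p)\int\limits_\A f(\mat{}{-1}{1}{}\mat{1}{b}{}{1}\mat{1}{c}{}{1}g)\,\psi(-\alpha b)\,db.
\]
Now write $\mat{1}{c}{}{1}g=g\,h$ with $h:=g^{-1}\mat{1}{c}{}{1}g$. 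Since $\mat{1}{c}{}{1}$ is supported at $p$, so is $h$, with $p$-component $h_p=g_p^{-1}\mat{1}{c_p}{}{1}g_p$. The map $c_p\mapsto h_p$ is continuous and sends $0$ to the identity, so, $\Gamma_p$ being open, there is an integer $m_p\geq0$ with $h_p\in\Gamma_p$ whenever $c_p\in p^{m_p}\Z_p$. For such $c_p$, right-invariance of $f$ under $h$ gives $f(\mat{}{-1}{1}{}\mat{1}{b}{}{1}gh)=f(\mat{}{-1}{1}{}\mat{1}{b}{}{1}g)$, whence
\[
 (W_s^\alpha f)(g)=\psi_p(-\alpha c_p)\,(W_s^\alpha f)(g)\qquad\text{for all }c_p\in p^{m_p}\Z_p.
\]

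To finish, suppose $(W_s^\alpha f)(g)\neq0$. Then $\psi_p(\alpha c_p)=1$ for every $c_p\in p^{m_p}\Z_p$, i.e.\ $\psi_p$ is trivial on $\alpha p^{m_p}\Z_p=p^{v_p(\alpha)+m_p}\Z_p$. Since $\psi_p$ is trivial on $\Z_p$ but not on $p^{-1}\Z_p$, triviality on $p^j\Z_p$ holds exactly when $j\geq0$; hence $v_p(\alpha)+m_p\geq0$, which is the desired per-prime statement. At a good prime we may take $\Gamma_p=G(\Z_p)$ and $m_p=0$: for $c_p\in\Z_p$ we have $\mat{1}{c_p}{}{1}\in G(\Z_p)$, so $h_p=g_p^{-1}\mat{1}{c_p}{}{1}g_p\in G(\Z_p)=\Gamma_p$, forcing $v_p(\alpha)\geq0$ and ensuring $p\nmid M$.

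The main obstacle is the middle step: verifying that the conjugate $g_p^{-1}\mat{1}{c_p}{}{1}g_p$ lands in $\Gamma_p$ for $c_p$ in an explicitly controlled neighborhood $p^{m_p}\Z_p$, and in particular that one may take $m_p=0$ exactly at the good primes so that only bad primes divide $M$. A secondary technical point is convergence: the substitution above is immediate in the region ${\rm Re}(s)>0$ where \eqref{globalWeq1} converges absolutely (as recorded after \eqref{globalWeq13}), and the vanishing then extends to all $s$ by the meromorphic continuation of $s\mapsto(W_s^\alpha f)(g)$ furnished by the factorization \eqref{globalWeq13}.
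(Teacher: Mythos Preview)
Your proof is correct and takes essentially the same approach as the paper: both arguments exploit right-invariance of $f$ under a compact open subgroup to deduce $(W_s^\alpha f)(g)=\psi_p(-\alpha c_p)(W_s^\alpha f)(g)$ for $c_p\in p^{m_p}\Z_p$, then read off the conductor condition on $\alpha$. The paper reduces to pure tensors and phrases the translation locally, while you carry out the identical substitution globally and spell out the conjugation $h_p=g_p^{-1}\mat{1}{c_p}{}{1}g_p$ explicitly; this is a presentational difference, not a different idea.
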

\begin{proof}
We may assume that $f=\otimes f_v$ is a pure tensor. Recall from \eqref{globalWeq2} that $(W_s^\alpha f)(g)=\prod_v(W^\alpha f_v)(g_v)$. For a prime $p$, there exists a positive integer $m_p$ such that for $x\in p^{m_p}\Z_p$
\begin{equation}\label{alphalatticelemmaeq2}
 (W^\alpha f_p)(g_p)=(W^\alpha f_p)(\mat{1}{x}{}{1}g_p)=\psi_p(\alpha x)(W^\alpha f_p)(g_p).
\end{equation}
It follows that $(W^\alpha f_p)(g_p)=0$ if $\alpha\notin p^{-m_p}\Z_p$. If $f_p$ is spherical and $g_p\in G(\Z_p)$, we may choose $m_p=0$. Then \eqref{alphalatticelemmaeq1} holds with $M=\prod_pp^{m_p}$. This concludes the proof.
\end{proof}

Using a common notation, we set, for a complex number $t$ and a positive integer $n$,
\begin{equation}\label{sigmatdefeq}
 \sigma_t(n)=\sum_{d\mid n}d^t.
\end{equation}
The sum is understood to be over the positive divisors of $n$. We write $\sigma$ for $\sigma_1$.

\begin{lemma}\label{Walphasigmalemma}
 Let $\alpha$ be a non-zero integer.
 \begin{enumerate}
  \item For ${\rm Re}(s)>0$,
   \begin{equation}\label{Walphasigmalemmaeq1}
    \prod_{p<\infty}(W^\alpha f_{s,p}^{\rm sph})(1)=\frac{\sigma_{2s}(|\alpha|)}{\zeta(2s+1)|\alpha|^{2s}}.
   \end{equation}
  \item For $s=1/2$ and a positive, square-free integer $N$,
   \begin{equation}\label{Walphasigmalemmaeq2}
    \bigg(\prod_{p\mid N}(W^\alpha f_{1/2,p}^{\rm St})(1)\bigg)\bigg(\prod_{p\nmid N}(W^\alpha f_{1/2,p}^{\rm sph})(1)\bigg)=\frac{\sigma(n')\mu(N)}{\zeta(2)|\alpha|\varphi(N)},
   \end{equation}
   where $\mu$ is the M\"obius function, $\varphi$ is Euler's function, $\sigma=\sigma_1$, and $n'=\prod_{p\nmid N}p^{v_p(\alpha)}$ is the part of $|\alpha|$ that is relatively prime to $N$.
 \end{enumerate}
\end{lemma}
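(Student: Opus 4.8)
The plan is to reduce everything to the explicit local formulas of Lemma~\ref{calphaplemma} and then assemble the global products using the Euler product for $\zeta$ and the multiplicativity of $\sigma_{2s}$. Throughout, $\alpha$ being a nonzero integer means $v_p(\alpha)\geq0$ at every prime, so only the nonvanishing branches of \eqref{calphaplemmaeq3} and \eqref{calphaplemmaeq4} occur, and we always evaluate at $y=1$.

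For part~(i) I would first rewrite the local factor. Setting $q=p$ and $m=v_p(\alpha)$ in \eqref{calphaplemmaeq3} (so that the local absolute value is $p^{-m}$), a short manipulation of the geometric-type quotient gives $(W^\alpha f_{s,p}^{\rm sph})(1)=(1-p^{-2s-1})\sum_{j=0}^{m}p^{-2sj}=(1-p^{-2s-1})\,p^{-2sm}\sigma_{2s}(p^{m})$, where the last equality comes from reindexing the sum by $e=m-j$. Taking the product over all primes, the factors $(1-p^{-2s-1})$ combine to $\zeta(2s+1)^{-1}$, the factors $p^{-2sm}$ combine to $|\alpha|^{-2s}$, and the factors $\sigma_{2s}(p^{m})$ combine to $\sigma_{2s}(|\alpha|)$ by multiplicativity of $\sigma_{2s}$ together with the prime factorization of the divisors of $|\alpha|$. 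This yields \eqref{Walphasigmalemmaeq1}; convergence for ${\rm Re}(s)>0$ is already guaranteed by the computation in \eqref{globalWeq13}.

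For part~(ii) I would write the left side of \eqref{Walphasigmalemmaeq2} as the full spherical product of part~(i) corrected by the ratio of Steinberg to spherical factors at the primes dividing $N$, i.e.\ as $\big(\prod_{p<\infty}(W^\alpha f_{1/2,p}^{\rm sph})(1)\big)\prod_{p\mid N}\frac{(W^\alpha f_{1/2,p}^{\rm St})(1)}{(W^\alpha f_{1/2,p}^{\rm sph})(1)}$. Using \eqref{localpadicWeq3} and \eqref{localpadicWeq4}, the $(1+p^{-1})$ factors cancel in each ratio, leaving $\frac{-p^{-1-v_p(\alpha)}}{1-p^{-1-v_p(\alpha)}}$. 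By part~(i) the first product equals $\sigma(|\alpha|)/(\zeta(2)|\alpha|)$, and factoring $\sigma(|\alpha|)=\sigma(n')\prod_{p\mid N}\sigma(p^{v_p(\alpha)})$ lets me combine, at each $p\mid N$, the factor $\sigma(p^{v_p(\alpha)})$ with the corresponding ratio.

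The one computational nugget worth isolating is this last collapse: for each $p\mid N$ one checks that $\sigma(p^{m})\cdot\frac{-p^{-1-m}}{1-p^{-1-m}}=-\frac{1}{p-1}$, independent of $m=v_p(\alpha)$, because $\sigma(p^m)=\frac{p^{m+1}-1}{p-1}$ and $1-p^{-1-m}=\frac{p^{m+1}-1}{p^{m+1}}$ make the $m$-dependence telescope. Multiplying over $p\mid N$ and using that $N$ is squarefree (so $\mu(N)=\prod_{p\mid N}(-1)$ and $\varphi(N)=\prod_{p\mid N}(p-1)$) produces the factor $\mu(N)/\varphi(N)$, giving \eqref{Walphasigmalemmaeq2}. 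I expect this telescoping step---verifying that the $v_p(\alpha)$-dependence cancels at the primes dividing $N$ so that only $\sigma(n')$ survives---to be the main point requiring care; everything else is bookkeeping with Euler products.
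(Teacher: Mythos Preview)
Your proof is correct and follows essentially the same approach as the paper: both arguments reduce to the explicit local values from Lemma~\ref{calphaplemma} and then assemble the answer via the Euler product for $\zeta$ and multiplicativity of $\sigma_{2s}$. The only cosmetic difference is that in part~(ii) the paper substitutes $s=1/2$ directly into the combined product formula and simplifies, whereas you invoke part~(i) and correct by the Steinberg-to-spherical ratios at $p\mid N$; the telescoping you isolate is exactly the computation hidden in the paper's simplification step.
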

\begin{proof}
By Lemma~\ref{calphaplemma},
\begin{align}\label{Walphasigmalemmaeq3}
  &\bigg(\prod_{p\mid N}(W^\alpha f_{s,p}^{\rm St})(1)\bigg)\bigg(\prod_{p\nmid N}(W^\alpha f_{s,p}^{\rm sph})(1)\bigg)\nonumber\\
  &\qquad=\bigg(\prod_{p\mid N}\frac{(1-p^{-2s-1})|\alpha|_p^{2s}-(1-p^{2s-1})}{1-p^{2s}}\bigg)\bigg(\prod_{p\nmid N}\frac{(|\alpha|_p^{2s}-p^{2s})(1-p^{-2s-1})}{1-p^{2s}}\bigg).
\end{align}
Setting $N=1$ gives
\begin{align}\label{Walphasigmalemmaeq4}
  \prod_p (W^\alpha f_{s,p}^{\rm sph})(1)&=\frac1{\zeta(2s+1)}\prod_p\frac{|\alpha|_p^{2s}-p^{2s}}{1-p^{2s}}\nonumber\\
  &=\frac1{\zeta(2s+1)|\alpha|_\infty^{2s}}\,\prod_p\frac{1-p^{2s(1+v_p(\alpha))}}{1-p^{2s}}\nonumber\\
  &=\frac1{\zeta(2s+1)|\alpha|_\infty^{2s}}\,\prod_p\Big(1+p^{2s}+\ldots+(p^{2s})^{v_p(\alpha)}\Big).
\end{align}
This proves \eqref{Walphasigmalemmaeq1}. Setting $s=1/2$ in \eqref{Walphasigmalemmaeq3} gives
\begin{align}\label{Walphasigmalemmaeq5}
  \bigg(\prod_{p\mid N}(W^\alpha f_{1/2,p}^{\rm St})(1)\bigg)\bigg(\prod_{p\nmid N}(W^\alpha f_{1/2,p}^{\rm sph})(1)\bigg)&=\bigg(\prod_{p\mid N}\frac{(1-p^{-2})|\alpha|_p}{1-p}\bigg)\bigg(\prod_{p\nmid N}\frac{(|\alpha|_p-p)(1-p^{-2})}{1-p}\bigg)\nonumber\\
  &=\frac1{\zeta(2)|\alpha|_\infty}\bigg(\prod_{p\mid N}\frac{1}{1-p}\bigg)\bigg(\prod_{p\nmid N}\frac{1-p^{1+v_p(\alpha)}}{1-p}\bigg)\nonumber\\
  &=\frac{\mu(N)}{\zeta(2)|\alpha|_\infty}\bigg(\prod_{p\mid N}\frac{1}{p-1}\bigg)\sigma(n').
\end{align}
This proves \eqref{Walphasigmalemmaeq2}, because $\varphi(N)=\prod_{p|N}(p-1)$.
\end{proof}
\section{Eisenstein series}\label{Sect:Eisenstein series}
In this section we prove our main results for Eisenstein series without character. The preparations from Sects.~\ref{Sect:intertwining} and~\ref{Sect:Whittaker} allow us to make the connection between adelic and classical Eisenstein series. Theorem~\ref{E2reptheorem} identifies the global representation generated by the classical $E_2$.
\subsection{Fourier expansion}\label{Fouriersec}
As in Sect.~\ref{globalsec}, we consider the global $\H$-module $V_s$. For any $f\in V_s$, define the Eisenstein series
\begin{equation}\label{Fouriereq1}
 E(g,f)=\sum_{\gamma\in B(\Q)\backslash G(\Q)}f(\gamma g),\qquad g\in G(\A).
\end{equation}
By \cite[Prop.~3.7.2]{Bump1997}, the sum converges absolutely if ${\rm Re}(s)>1/2$. Under this assumption, $E(\cdot,f)$ is an automorphic form on $G(\A)$. In order to analytically continue the Eisenstein series to other values of $s$, the key is to consider the Fourier expansion and analytically continue each piece. Even though this is part of a general theory, we briefly recall the main steps for our rather simple situation. For $\alpha\in\Q$, the $\alpha$-th Fourier coefficient of $E(g,f)$ is defined by
\begin{equation}\label{Fouriereq2}
 c_\alpha(g,f)=\int\limits_{\Q\backslash\A}E(\mat{1}{b}{}{1}g,f)\psi(-\alpha b)\,db.
\end{equation}
The Fourier expansion of the Eisenstein series is
\begin{equation}\label{Fouriereq3}
 E(g,f)=\sum_{\alpha\in\Q}c_\alpha(g,f).
\end{equation}
To calculate the $c_\alpha(g,f)$, we use that, by the Bruhat decomposition, a set of representatives for $B(\Q)\backslash G(\Q)$ is given by $1$ and $\mat{}{-1}{1}{}\matb{1}{x}{}{1}$, $x\in\Q$. Substituting
\begin{equation}\label{Fouriereq4}
 E(g,f)=f(g)+\sum_{x\in\Q}f(\mat{}{-1}{1}{}\matb{1}{x}{}{1}g)
\end{equation}
into \eqref{Fouriereq2} gives
\begin{equation}\label{Fouriereq5}
 c_\alpha(g,f)=\int\limits_{\A}f(\mat{}{-1}{1}{}\mat{1}{b}{}{1}g)\psi(-\alpha b)\,db
\end{equation}
for $\alpha\neq0$, and
\begin{equation}\label{Fouriereq6}
 c_0(g,f)=f(g)+\int\limits_{\A}f(\mat{}{-1}{1}{}\mat{1}{b}{}{1}g)\,db.
\end{equation}
The integrals in \eqref{Fouriereq5} were analyzed in Sect.~\ref{globalWsec}, where we called them $(W_s^\alpha f)(g)$ (see \eqref{globalWeq1}), and found to be convergent for ${\rm Re}(s)>0$. The integrals in \eqref{Fouriereq6} were analyzed in Sect.~\ref{globalsec}, where we called them $(A_sf)(g)$ (see \eqref{globaleq2}), and found to be convergent for ${\rm Re}(s)>1/2$.
\begin{lemma}\label{Fourierlemma1}
 Assume that ${\rm Re}(s)>1/2$ and $f\in V_s$. For fixed $g=(g_p)\in G(\A)$, there exists an integer $M>0$ such that
 \begin{equation}\label{Fourierlemma1eq1}
  E(g,f)=f(g)+(A_sf)(g)+\sum_{\substack{\alpha\in M^{-1}\Z\\\alpha\neq0}}(W_s^\alpha f)(g).
 \end{equation}
 The integer $M$ depends only on right-invariance properties of $f$ under the groups $G(\Z_p)$, and on the $g_p$ for $p<\infty$. We may choose $M$ such that it is divisible only by those primes $p$ for which $f$ is not $G(\Z_p)$-invariant or $g_p\notin G(\Z_p)$.
\end{lemma}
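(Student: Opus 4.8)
The plan is to combine the Fourier expansion \eqref{Fouriereq3} with the explicit formulas for the Fourier coefficients already established in \eqref{Fouriereq5} and \eqref{Fouriereq6}, and then to control the support of the nonzero coefficients using Lemma~\ref{alphalatticelemma}. First I would recall that, for ${\rm Re}(s)>1/2$, the Eisenstein series $E(g,f)$ is given by the absolutely convergent sum \eqref{Fouriereq1}, so its Fourier expansion \eqref{Fouriereq3} is valid. By the computation leading to \eqref{Fouriereq5} and \eqref{Fouriereq6}, the zeroth coefficient is $c_0(g,f)=f(g)+(A_sf)(g)$, where the integral is convergent in this region by Sect.~\ref{globalsec}, and for $\alpha\neq0$ the coefficient is $c_\alpha(g,f)=(W_s^\alpha f)(g)$, convergent by \eqref{globalWeq13}. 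Substituting these into \eqref{Fouriereq3} gives
\begin{equation*}
 E(g,f)=f(g)+(A_sf)(g)+\sum_{\substack{\alpha\in\Q\\\alpha\neq0}}(W_s^\alpha f)(g),
\end{equation*}
which is \eqref{Fourierlemma1eq1} except that the sum is a priori over all nonzero rationals rather than over $M^{-1}\Z$.

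The second step is to cut the summation down to a lattice. Here I would apply Lemma~\ref{alphalatticelemma} directly: for the fixed $g=(g_p)$ and the given $f$, that lemma produces an integer $M>0$, divisible only by the primes $p$ at which $f$ fails to be $G(\Z_p)$-invariant or $g_p\notin G(\Z_p)$, such that $(W_s^\alpha f)(g)=0$ whenever $\alpha\notin M^{-1}\Z$. Consequently every term in the sum over $\alpha\in\Q\setminus\{0\}$ with $\alpha\notin M^{-1}\Z$ vanishes, and the sum collapses to a sum over $\alpha\in M^{-1}\Z$, $\alpha\neq0$, giving exactly \eqref{Fourierlemma1eq1}. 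The asserted divisibility and dependence properties of $M$ are inherited verbatim from Lemma~\ref{alphalatticelemma}, so nothing new needs to be proved there.

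I expect this to be essentially a bookkeeping argument, since all the analytic content—convergence of $E(g,f)$, of $(A_sf)(g)$, and of the individual Whittaker integrals, together with the vanishing statement—has already been isolated in the preceding sections. The only point requiring a small amount of care is that Lemma~\ref{alphalatticelemma} is stated for a pure tensor $f$, so in the general case one should first write $f$ as a finite sum of pure tensors, obtain an integer $M$ for each, and take $M$ to be their least common multiple; this preserves the divisibility condition because the primes dividing the individual integers are among those where some pure-tensor summand fails to be $G(\Z_p)$-invariant, hence among those where $f$ itself fails to be $G(\Z_p)$-invariant. The main (very mild) obstacle is therefore just verifying that this passage to a common $M$ does not enlarge the set of ramified primes beyond what the statement allows, which follows since the right-invariance properties of $f$ dominate those of each of its pure-tensor constituents.
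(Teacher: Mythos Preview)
Your proposal is correct and follows essentially the same approach as the paper's proof: combine the Fourier expansion \eqref{Fouriereq3} with \eqref{Fouriereq5}, \eqref{Fouriereq6}, and then invoke Lemma~\ref{alphalatticelemma} to restrict the sum to $M^{-1}\Z$. One small remark: your final paragraph about reducing to pure tensors is unnecessary here, since Lemma~\ref{alphalatticelemma} is already stated for arbitrary $f\in V_s$ (the reduction to pure tensors happens inside its proof), so you may simply quote it directly.
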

\begin{proof}
We may assume that $f=\otimes f_v$ is a pure tensor. By \eqref{Fouriereq3}, \eqref{Fouriereq5}, \eqref{Fouriereq6} we have the expansion \eqref{Fourierlemma1eq1} with the summation being over $\alpha\in\Q^\times$. By Lemma~\ref{alphalatticelemma} the summation can be restricted to $\alpha\in M^{-1}\Z$ as asserted.
\end{proof}

\begin{lemma}\label{Fourierlemma2}
 Assume that $f_s\in V_s$ is a flat section. For ${\rm Re}(s)>1/2$ and fixed $g=(g_p)\in G(\A)$, let $M$ be as in Lemma~\ref{Fourierlemma1}, so that
 \begin{equation}\label{Fourierlemma2eq1}
  E(g,f_s)=f_s(g)+(A_sf_s)(g)+\sum_{\substack{\alpha\in M^{-1}\Z\\\alpha\neq0}}(W_s^\alpha f_s)(g).
 \end{equation}
 The term $\sum_\alpha(W_s^\alpha f_s)(g)$ admits an analytic continuation to ${\rm Re}(s)>0$. The term $(A_sf_s)(g)$ admits an analytic continuation to the same region, except possibly for $s=1/2$. Hence $E(g,f_s)$ admits an analytic continuation to ${\rm Re}(s)>0$, $s\neq1/2$.
\end{lemma}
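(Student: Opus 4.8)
The plan is to treat the three terms on the right-hand side of \eqref{Fourierlemma2eq1} separately. The constant term $f_s(g)$ requires no work: since $f_s$ is a flat section, we have $f_s(g)=\delta(g)^{s-s_0}f(g)$ by the recipe following \eqref{deltadefeq}, which is an entire function of $s$. For the term $(A_sf_s)(g)$ I would simply invoke the analysis of Sect.~\ref{globalsec}. Formula \eqref{globaleq5} expresses $(A_sf_s)(g)$, for a flat section, as $\zeta(2s)/\zeta(2s+1)$ times a product of local factors that are individually holomorphic for ${\rm Re}(s)>0$; the only pole in this region is the simple pole of $\zeta(2s)$ at $s=1/2$. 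This already yields the asserted continuation of $(A_sf_s)(g)$ to ${\rm Re}(s)>0$ with at most a simple pole at $s=1/2$.

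The substantial term is the Whittaker sum $\sum_\alpha(W_s^\alpha f_s)(g)$. First I would check that each summand continues holomorphically to ${\rm Re}(s)>0$. By \eqref{globalWeq13} the summand factors as $\zeta(2s+1)^{-1}$ times the archimedean factor $(W_{s,\infty}^\alpha f_\infty)(g_\infty)$ and finitely many $p$-adic factors. The archimedean factor is entire in $s$ by the discussion following \eqref{localrealWeq10}; the $p$-adic factors are holomorphic for ${\rm Re}(s)>0$ by the explicit formulas of Lemma~\ref{calphaplemma}, whose only possible poles lie at $q^{2s}=1$, which is impossible once ${\rm Re}(s)>0$; and $\zeta(2s+1)$ is holomorphic and zero-free in the same region. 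Hence every individual term is holomorphic on ${\rm Re}(s)>0$.

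Next I would prove that the series converges uniformly on compact subsets of ${\rm Re}(s)>0$, so that its sum is again holomorphic there. Fixing a finite set $T_0$ of ``bad'' primes coming from the invariance of $f$ and from $g$, the product of the spherical local Whittaker values over $p\notin T_0$ is controlled by the divisor-sum computation of Lemma~\ref{Walphasigmalemma}, and together with the finitely many factors at $p\in T_0$ the whole finite part grows at most polynomially in $|\alpha|$, with exponent and implied constant uniform for $s$ in a compact set. The archimedean factor \eqref{localrealWeq10} is, up to a power of $|\alpha|$, the classical Whittaker function $W_{-\sgn(\alpha y)k/2,\,-s}(4\pi|\alpha y|)$ with $k$ fixed and $y>0$ determined by $g_\infty$, and this decays exponentially like $e^{-2\pi|\alpha y|}$ as $|\alpha|\to\infty$. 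The exponential decay dominates the polynomial growth, giving absolute convergence and the desired uniformity. Combining the three terms then gives the analytic continuation of $E(g,f_s)$ to ${\rm Re}(s)>0$, $s\neq1/2$.

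The main obstacle is precisely this uniform-convergence estimate for the Whittaker sum: one must produce a decay bound on the archimedean Whittaker function that is uniform in $s$ over compact sets. This rests on the asymptotic $W_{\kappa,\mu}(z)\sim z^\kappa e^{-z/2}$ as $z\to+\infty$, applied with $z=4\pi|\alpha y|$ and with $(\kappa,\mu)=(\pm k/2,-s)$ ranging over a compact set. Once such a uniform exponential bound is secured, its competition against the polynomially growing divisor sums supplied by Lemma~\ref{Walphasigmalemma} is routine, and the holomorphy of the sum follows by the standard theorem on locally uniform limits of holomorphic functions.
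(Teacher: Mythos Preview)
Your proposal is correct and follows essentially the same route as the paper: both treat the three terms separately, dispatch $(A_sf_s)(g)$ via \eqref{globaleq5}, and establish the holomorphy of the Whittaker sum by pairing the exponential decay of the archimedean Whittaker function against polynomial growth of the non-archimedean contribution, uniformly on compact sets in ${\rm Re}(s)>0$. The only minor difference is that for the polynomial bound on the non-archimedean factors the paper appeals to the Kirillov model description in \cite[Thms.~4.7.2, 4.7.3]{Bump1997} (which covers arbitrary local vectors at the bad places), whereas you invoke Lemma~\ref{calphaplemma} and Lemma~\ref{Walphasigmalemma}, which strictly speaking only treat the spherical and Steinberg vectors; your claim about the finitely many bad-place factors being polynomially bounded is true but would need a word of justification beyond those lemmas.
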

\begin{proof}
By the considerations in Sect.~\ref{globalsec} and Sect.~\ref{V12psec}, we need only prove the statement about $\sum_\alpha (W_s^\alpha f_s)(g)$. For this we only give a rough sketch, since in the cases of interest for us everything will follow from explicit calculations. In general, one may assume that $f_s=\otimes f_{s,v}$ is a pure tensor, and that the archimedean section is the function $f_s^{(k)}$ of weight $k$ defined in \eqref{localrealeq2}, for some even integer $k$. We saw in Sect.~\ref{globalWsec} that each individual $(W_s^\alpha f_s)(g)$ continues analytically to ${\rm Re}(s)>0$. Using the Iwasawa decomposition, we may assume that $g=\mat{y}{}{}{1}$ with $y\in\A^\times$. We then have to show that the series of functions $\sum_{\alpha\in M^{-1}\Z,\,\alpha\neq0}F_\alpha$, where $F_\alpha(s):=(W_s^\alpha f_s)\left(\mat{y}{}{}{1}\right)$, is uniformly convergent on a bounded domain $D$ in ${\rm Re}(s)>0$. The key is that there exists a polynomial $P\in\Q[X]$, independent of $\alpha$ and $s\in D$, such that
\begin{equation}\label{Fourierlemma2eq7}
 \Big|(W_s^\alpha f_s)\left(\mat{y}{}{}{1}\right)\Big|\leq P(|\alpha|_\infty)e^{-2\pi|\alpha y_\infty|_\infty}.
\end{equation}
The exponential comes from the archimedean place, more precisely from an estimate on the classical Whittaker function appearing in \eqref{localrealWeq10} (see \cite[16.3]{WhittakerWatson1962}). To prove that the contribution from the non-archimedean places grows at most polynomially in $\alpha$, one can use the description of the Kirillov model in \cite[Thms.~4.7.2, 4.7.3]{Bump1997}.
\end{proof}

The possible pole at $s=1/2$ in Lemma~\ref{Fourierlemma2} comes from the term $(A_sf_s)(g)$. Let $V_{1/2}'$ and $V_{1/2}''$ be the subspaces of $V_{1/2}$ defined in Sect.~\ref{V12psec}. Then
\begin{equation}\label{intpropseceq3}
 E(g,f):=\lim_{s\to1/2}\,E(g,f_s)
\end{equation}
exists for $f\in V_{1/2}'$ and all $g\in G(\A)$; here, $f_s$ is the unique flat section containing $f$. Evidently,
\begin{equation}\label{intpropseceq4}
 E(g,f)=f(g)+(A_{1/2}f)(g)+\sum_{\substack{\alpha\in M^{-1}\Z\\\alpha\neq0}}(W_{1/2}^\alpha f)(g),
\end{equation}
with $A_{1/2}f$ as in \eqref{V12peq1}.

It follows directly from the definition \eqref{Fouriereq1} that the map $f\mapsto E(\cdot,f)$ from $V_s$ to the space of automorphic forms is intertwining (i.e., a homomorphism of $\H$-modules) if ${\rm Re}(s)>1/2$. This is less obvious for the analytically continued Eisenstein series, and in fact it is not true for $s=1/2$. The following result clarifies which intertwining properties are retained. Recall the definition of the space $U_k$ in \eqref{Ukdefeq}.
\begin{lemma}\label{intpropprop}
 For $f\in V_{1/2}''$, let $E(g,f)$ be as defined in \eqref{intpropseceq3}.
 \begin{enumerate}
  \item For $h\in G(\A_{\rm fin})$, let $f^h(g)=f(gh)$. Then
   \begin{equation}\label{intproppropeq1}
    E(g,f^h)=E(gh,f)\qquad\text{for }h\in G(\A_{\rm fin}).
   \end{equation}
  \item If $f\in U_k$ with $k\neq0$, then
   \begin{align}
    \label{intproppropeq2}E(\cdot,Hf)&=H\,E(\cdot,f),\\
    \label{intproppropeq3}E(\cdot,Rf)&=\begin{cases}
                 R\,E(\cdot,f)&\text{if }k\neq-2,\\
                 0&\text{if }k=-2,
                \end{cases}\\
    \label{intproppropeq4}E(\cdot,Lf)&=\begin{cases}
                 L\,E(\cdot,f)&\text{if }k\neq2,\\
                 0&\text{if }k=2,
                \end{cases}\\
    \label{intproppropeq5}E(\cdot,\varepsilon_-f)&=\varepsilon_-\,E(\cdot,f).
   \end{align}
 \end{enumerate}
\end{lemma}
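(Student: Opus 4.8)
The plan is to read off the asserted covariance directly from the Fourier expansion \eqref{intpropseceq4},
$$E(g,f)=f(g)+(A_{1/2}f)(g)+\sum_{\substack{\alpha\in M^{-1}\Z\\\alpha\neq0}}(W_{1/2}^\alpha f)(g),$$
and to verify it term by term. The key observation is that the first summand $f(g)$ and each Whittaker summand $(W_{1/2}^\alpha f)(g)$ are defined by an evaluation, resp.\ an integral \eqref{globalWeq1}, that is manifestly covariant under right translation by $G(\R)$ and $G(\A_{\rm fin})$, so they present no difficulty; the entire subtlety is concentrated in the middle term $(A_{1/2}f)(g)$, whose behaviour under $G(\A_{\rm fin})$ and under the archimedean operators is precisely the content of Proposition~\ref{V12pintprop}. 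I would stress at the outset why one cannot simply translate (or differentiate) the flat section $f_s$ and pass to the limit: as with the phenomenon recorded after \eqref{localpadiceq2}, one has $(f_s)^h\neq(f^h)_s$ and $R(f_s)\neq(Rf)_s$ in general, and it is exactly for this reason that the argument must be carried out on the Fourier side.

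For part (i), fix $h\in G(\A_{\rm fin})$ and write out \eqref{intpropseceq4} for $f^h$. The evaluation term gives $f^h(g)=f(gh)$ trivially. For the Whittaker term, \eqref{globalWeq1} yields $(W_{1/2}^\alpha f^h)(g)=\int_\A f(\mat{}{-1}{1}{}\mat{1}{b}{}{1}gh)\psi(-\alpha b)\,db=(W_{1/2}^\alpha f)(gh)$, and for the intertwining term Proposition~\ref{V12pintprop}(i) supplies $(A_{1/2}f^h)(g)=(A_{1/2}f)(gh)$. Summing the three contributions gives $E(g,f^h)=E(gh,f)$. The only bookkeeping point is that the integer $M$ in \eqref{intpropseceq4} a priori differs for $f$ and for $f^h$; but by Lemma~\ref{alphalatticelemma} any sufficiently large $M$ is admissible (enlarging $M$ only adjoins vanishing terms), so one fixes a single $M$ valid for $f$ and $f^h$ simultaneously.

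For part (ii), let $X\in\{H,R,L,\varepsilon_-\}$ and apply $X$ to \eqref{intpropseceq4}. The evaluation term contributes $X[f(\cdot)]=(Xf)(\cdot)$ directly from the definition of the $\H_\infty$-action. Each Whittaker term is covariant as well: differentiating under the integral (and, for $\varepsilon_-$, using that it acts by right translation by $\mat{1}{}{}{-1}$) gives $X(W_{1/2}^\alpha f)=W_{1/2}^\alpha(Xf)$. The intertwining term is governed by Proposition~\ref{V12pintprop}(ii): for $X\in\{H,\varepsilon_-\}$, and for $X=R$ with $k\neq-2$ or $X=L$ with $k\neq2$, one has $X(A_{1/2}f)=A_{1/2}(Xf)$. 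Reassembling the three contributions reproduces exactly the Fourier expansion of $E(\cdot,Xf)$, giving $X\,E(\cdot,f)=E(\cdot,Xf)$ in these cases. In the boundary cases $k=-2$, $X=R$ and $k=2$, $X=L$, the element $Xf$ already vanishes in $V_s$ by \eqref{RHLVslemmaeq1}, resp.\ \eqref{RHLVslemmaeq3} (the coefficient $s+(1+k)/2$, resp.\ $s+(1-k)/2$, vanishes at $s=1/2$), so $E(\cdot,Xf)=0$, which is the stated value; here the intertwining genuinely fails, since $R\,E(\cdot,f)$ and $L\,E(\cdot,f)$ need not vanish.

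The main technical obstacle is justifying that $X$, and in part (i) right translation by $h$, may be moved inside the infinite sum $\sum_\alpha$. This rests on the uniform estimate \eqref{Fourierlemma2eq7}: after applying $X$ the summands satisfy an analogous polynomial-times-exponential bound (the exponential decay $e^{-2\pi|\alpha y_\infty|_\infty}$ is preserved and the prefactor grows only polynomially in $\alpha$), so the differentiated series converges locally uniformly and termwise differentiation is legitimate. Once this interchange is secured, the remainder is pure bookkeeping, with Proposition~\ref{V12pintprop} providing the only nontrivial input and simultaneously isolating the weights $k=\pm2$ at which the archimedean intertwining breaks down.
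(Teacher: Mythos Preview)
Your proposal is correct and follows essentially the same route as the paper's proof: reduce to the Fourier expansion \eqref{intpropseceq4}, observe that the first and third terms are manifestly covariant, and invoke Proposition~\ref{V12pintprop} for the intertwining term, noting that $Rf=0$ for $f\in U_{-2}$ and $Lf=0$ for $f\in U_2$ to handle the boundary cases. You supply more detail than the paper on the $M$-bookkeeping and on justifying termwise differentiation via \eqref{Fourierlemma2eq7}, but the argument is the same.
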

\begin{proof}
This follows from \eqref{intpropseceq4}. The intertwining properties for the first and the third term on the right hand side are clear. (Observe that $Rf=0$ for $f\in U_{-2}$ and $Lf=0$ for $f\in U_2$.) The properties for the second term follow from Proposition~\ref{V12pintprop}.
\end{proof}
\subsection{The Eisenstein series \texorpdfstring{$E_k$}{}}
Recall that the classical Eisenstein series $E_k$, defined in \eqref{introeq2} for an even integer $k\geq4$, are modular forms of weight $k$ with a Fourier expansion
\begin{equation}\label{Ekeq1}
 E_k(z)=1+\frac{(2\pi i)^k}{\zeta(k)(k-1)!}\sum_{n=1}^\infty\sigma_{k-1}(n)e^{2\pi inz}.
\end{equation}
(See \cite[Sect.~1.1]{DiamondShurman2005}.) In addition, there is the Eisenstein series $E_2$, defined by the conditionally convergent series \eqref{introeq4}. It is a non-holomorphic modular form of weight $2$ with Fourier expansion
\begin{equation}\label{EKeq1b}
 E_2(z)=1-\frac3{\pi y}-24\sum_{n=1}^\infty\sigma(n)e^{2\pi inz}.
\end{equation}
(See \cite[Sect.~1.2]{DiamondShurman2005}.) Theorem~\ref{Ektheorem} below will explain the adelic origin of these Eisenstein series.

First, we make some comments about the correspondence between automorphic forms on $G(\A)$ and functions on the upper half plane $\mathbb{H}$. Suppose an automorphic form $\Phi$ on $G(\A)$ is invariant under the adelic center. Assume also that it is right-invariant under $\prod_{p<\infty}K'_p$, where $K'_p$ is an open-compact subgroup of $K_p$, with $K'_p=K_p$ for almost all $p$, and the determinant map $K'_p\to\Z_p^\times$ is surjective for all $p$. Then, by strong approximation, $\Phi$ is determined by its values on $\GL(2,\R)^+$. Assume also that $\Phi$ has weight $k$ for some integer $k$, i.e., $\Phi(gr(\theta))=e^{ik\theta}\Phi(g)$ for all $g\in G(\A)$ and $\theta\in\R$. Then $\Phi$ is determined by its values on elements of the form $\mat{1}{x}{}{1}\mat{y}{}{}{1}$ with $x,y\in\R$ and $y>0$. Now, whenever we have a weight-$k$ function $\Phi$ on $\GL(2,\R)^+$ invariant under the archimedean center, we can define a function $F$ on the upper half plane by
\begin{equation}\label{Fouriereq9}
 F(z)=\det(g)^{-k/2}j(g,i)^k\Phi(g),\qquad\text{where }g\in \GL(2,\R)^+\text{ is such that }gi=z.
\end{equation}
Here, $j(g,z)=cz+d$ for $g=\mat{a}{b}{c}{d}$, as usual. We can take a specific $g$, namely,
\begin{equation}\label{Fouriereq10}
 F(z)=y^{-k/2}\Phi(\mat{1}{x}{}{1}\mat{y}{}{}{1}),\qquad z=x+iy.
\end{equation}
The $F$ thus defined transforms like a modular form of weight $k$ under $\Gamma=\SL(2,\Q)\cap\prod_{p<\infty}K'_p$, i.e., $F|_k\gamma=F$ for $\gamma\in\Gamma$.

Conversely, starting with a function $F$ on $\mathbb{H}$ with this transformation property, we can define a weight-$k$ function $\Phi$ on $G(\A$) such that \eqref{Fouriereq9} holds. If $F$ is sufficiently regular (e.g., a modular form), then $\Phi$ is an automorphic form. We call it the automorphic form corresponding to $F$.

\begin{theorem}\label{Ektheorem}
 Let $k\geq2$ be an even integer. Let $f_k\in V_{(k-1)/2}$ be the following pure tensor,
 \begin{equation}\label{Ektheoremeq0}
  f_k=f_{(k-1)/2,\infty}^{(k)}\otimes\Big(\otimes_{p<\infty}f_{(k-1)/2,p}^{\rm sph}\Big)
 \end{equation}
 Then $E(\cdot,f_k)$ is the automorphic form corresponding to $E_k$.
\end{theorem}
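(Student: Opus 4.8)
The plan is to evaluate $\Phi_k:=E(\cdot,f_k)$ at the adele $g$ whose archimedean component is $\mat{1}{x}{}{1}\mat{y}{}{}{1}$ (with $y>0$) and whose finite components are all trivial, then pass to the upper half plane via \eqref{Fouriereq10} and match the Fourier expansion against \eqref{Ekeq1} (for $k\geq4$) and \eqref{EKeq1b} (for $k=2$). Since $f_k$ is spherical at every finite place and $g_p=1\in K_p$ for all $p<\infty$, Lemma~\ref{Fourierlemma1} (respectively \eqref{intpropseceq4} when $k=2$, which applies because $f_k\in U_k\subset V_{1/2}''$) lets me take $M=1$, giving
\[
 \Phi_k(g)=f_k(g)+(A_sf_k)(g)+\sum_{\substack{\alpha\in\Z\\\alpha\neq0}}(W_s^\alpha f_k)(g),\qquad s=\tfrac{k-1}2.
\]
Before computing, I would record that $\Phi_k$ has weight $k$, is right-invariant under $\prod_{p<\infty}K_p$, and has trivial central character, so that the correspondence of Sect.~\ref{Fouriersec} applies and $F:=y^{-k/2}\Phi_k$ is a weight-$k$ form on $\SL(2,\Z)$.

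First I would dispose of the two easy terms. The constant term is $f_k(g)=y^{k/2}$ by \eqref{localrealeq2} (the finite spherical values being $1$), contributing $1$ to $F$. For the nonzero Fourier coefficients I combine the finite product of Lemma~\ref{Walphasigmalemma}(i), equal to $\sigma_{k-1}(|\alpha|)/(\zeta(k)|\alpha|^{k-1})$ at $s=(k-1)/2$, with the archimedean evaluation \eqref{localrealWeq5}. Because $\psi_\infty(x)=e^{-2\pi ix}$ and $y>0$, \eqref{localrealWeq5} forces the archimedean factor to vanish unless $\alpha<0$; writing $\alpha=-n$ with $n\geq1$, the factor $|\alpha|^{k-1}=n^{k-1}$ cancels against the denominator of the finite product, the $y^{k/2}$ is removed by the $y^{-k/2}$ in \eqref{Fouriereq10}, and each coefficient becomes $\tfrac{(2\pi i)^k}{\zeta(k)(k-1)!}\sigma_{k-1}(n)e^{2\pi inz}$. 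Summing over $n\geq1$ reproduces the non-constant part of \eqref{Ekeq1}, and for $k=2$ the identical computation with $\zeta(2)=\pi^2/6$ gives $-24\sum_{n\geq1}\sigma(n)e^{2\pi inz}$, as in \eqref{EKeq1b}.

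The decisive step, and the main obstacle, is the intertwining term $(A_sf_k)(g)$. For $k\geq4$ one has $s=(k-1)/2>1/2$, so by \eqref{globaleq5} with $T=\emptyset$ the term equals $\tfrac{\zeta(2s)}{\zeta(2s+1)}(A_{s,\infty}f_{s,\infty}^{(k)})(g_\infty)$, and this is simply $0$ because $A_{s,\infty}$ annihilates $f_{s,\infty}^{(k)}$ at this $s$ by \eqref{localrealeq6}; hence $F=E_k$ is holomorphic. When $k=2$, this same archimedean vanishing now meets the simple pole of $\zeta(2s)$ at $s=1/2$, producing a genuine $0\cdot\infty$ whose finite value is exactly what the analytic continuation of Sect.~\ref{V12psec} extracts. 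Here I would invoke \eqref{V12pintpropeq9} with $k=2$ and $T=\emptyset$: it evaluates the term to $-\tfrac3\pi f_{-1/2}^{(2)}(g_\infty)=-\tfrac3\pi$ (using $f_{-1/2}^{(2)}(\mat{y}{x}{}{1})=1$ from \eqref{localrealeq2}), which after multiplication by $y^{-1}$ supplies the non-holomorphic term $-\tfrac3{\pi y}$ of \eqref{EKeq1b}. Thus the content is that $k=2$ is not a formal specialization of $k\geq4$: the surviving residual intertwining term is precisely the source of the non-holomorphy of $E_2$, and checking that the delicate limit of Sect.~\ref{V12psec} produces exactly the constant $-3/\pi$ is the crux of the proof.
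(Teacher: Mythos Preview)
Your proposal is correct and follows essentially the same route as the paper's proof: Fourier-expand via Lemma~\ref{Fourierlemma1} (resp.\ \eqref{intpropseceq4} for $k=2$) with $M=1$, kill the intertwining term for $k\geq4$ by \eqref{localrealeq6} combined with \eqref{globaleq5}, evaluate it for $k=2$ via \eqref{V12pintpropeq9} with $T=\emptyset$ to get $-3/\pi$, and match the nonzero Fourier coefficients using \eqref{localrealWeq5} together with Lemma~\ref{Walphasigmalemma}(i). The extra remarks you add (weight, central character, $K_p$-invariance) are harmless bookkeeping that the paper leaves implicit.
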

\begin{proof}
We first assume that $k\geq4$. By Lemma~\ref{Fourierlemma1}
\begin{equation}\label{Ekeq2}
  E(g,f_k)=f_k(g)+(A_{(k-1)/2}f_k)(g)+\sum_{\substack{\alpha\in\Z\\\alpha\neq0}}(W_{(k-1)/2}^\alpha f_k)(g).
\end{equation}
It follows from \eqref{localrealeq6} and \eqref{globaleq5} that $A_{(k-1)/2}f_k=0$. Applying \eqref{Fouriereq10} to our function \eqref{Ekeq2}, we see that the corresponding function on the upper half plane is given by
\begin{equation}\label{Ekeq3}
 F(z)=1+y^{-k/2}\sum_{\substack{\alpha\in\Z\\\alpha\neq0}}(W_{(k-1)/2}^\alpha f_k)(\mat{1}{x}{}{1}\mat{y}{}{}{1}).
\end{equation}
By \eqref{localrealWeq5} and \eqref{Walphasigmalemmaeq1}, for $\alpha\neq0$,
\begin{align}\label{Ekeq4}
 (W_{(k-1)/2}^\alpha f_k)(\mat{1}{x}{}{1}\mat{y}{}{}{1})&=(W^\alpha f_{(k-1)/2,\infty}^{(k)})(\mat{1}{x}{}{1}\mat{y}{}{}{1})\prod_{p<\infty}(W^\alpha f_{(k-1)/2,p}^{\rm sph})(1)\nonumber\\
 &=\begin{cases}
     0&\text{if }\alpha>0,\\
     \displaystyle y^{k/2}\frac{(2\pi i)^k}{\zeta(k)(k-1)!}\sigma_{k-1}(|\alpha|)e^{-2\pi i\alpha(x+iy)}&\text{if }\alpha<0,
   \end{cases}
\end{align}
Hence, writing $n=-\alpha$, we see that $F$ equals the function $E_k$ in \eqref{Ekeq1}.

Now assume that $k=2$. In this case the argument is very similar, but instead of Lemma~\ref{Fourierlemma1} we use the analytically continued version \eqref{intpropseceq4}. The main difference for $k=2$ is that the intertwining operator is non-zero; using \eqref{V12pintpropeq9} with $T=\emptyset$, we get
\begin{equation}\label{Ekeq5}
 (A_{1/2}f_2)(\mat{1}{x}{}{1}\mat{y}{}{}{1})=-\frac3\pi.
\end{equation}
Equation \eqref{Ekeq4} simplifies for $\alpha<0$ to $-24y\sigma(|\alpha|)e^{-2\pi i\alpha(x+iy)}$. We see that the corresponding function on the upper half plane is precisely the classical $E_2$ given in \eqref{EKeq1b}.
\end{proof}
\subsection{Eisenstein series of weight \texorpdfstring{$2$}{} with level}\label{E2Nsec}
Let $N$ be a positive integer. Starting from the non-holomorphic Eisenstein series $E_2$ in \eqref{EKeq1b}, one forms the function
\begin{equation}\label{Eisleveleq1}
 \tilde E_{2,N}(z)=E_2(z)-NE_2(Nz).
\end{equation}
The $\frac1y$-terms cancel out, so that one obtains a holomorphic modular form of weight $2$ with respect to $\Gamma_0(N)$. (See \cite[Sect.~1.2]{DiamondShurman2005}.) The Fourier expansion of $\tilde E_{2,N}$ is
\begin{equation}\label{Eisleveleq2}
 \tilde E_{2,N}(z)=1-N-24\sum_{n=1}^\infty a_ne^{2\pi inz},
\end{equation}
where
\begin{equation}\label{Eisleveleq3}
 a_n=\begin{cases}
                  \sigma(n)&\text{if }N\nmid n,\\
                  \sigma(n)-N\sigma(n/N)&\text{if }N\mid n.
                 \end{cases}
\end{equation}
The adelic origin of this function is not difficult to determine. Let $f_2\in V_{1/2}$ be the function from Theorem~\ref{Ektheorem}, so that $E(\cdot,f_2)$ is the automorphic form corresponding to $E_2$. A straightforward calculation shows that
\begin{equation}\label{Eisleveleq4}
 g\longmapsto E(g\mat{1}{}{}{N_{\rm fin}},f_2),\qquad\text{where }N_{\rm fin}=(N,N,N,\ldots)\in\A_{\rm fin}^\times,
\end{equation}
is the automorphic form corresponding to $NE_2(Nz)$. Since the Eisenstein series is $\H_{\rm fin}$-intertwining by Lemma~\ref{intpropprop} i), it follows that $E(\cdot,\tilde f_{2,N})$, where $\tilde f_{2,N}=f_2-\mat{1}{}{}{N_{\rm fin}}f_2$, is the automorphic form corresponding to $\tilde{E}_{2,N}$. 

Now assume that $N$ is square-free. From an adelic point of view, instead of the functions $\tilde f_{2,N}$, it is more natural to consider
\begin{equation}\label{Eisleveleq6}
 f_{2,N}:=f_{1/2}^{(2)}\otimes\Big(\otimes_{p\mid N}f_{1/2,p}^{\rm St}\Big)\otimes\Big(\otimes_{p\nmid N}f_{1/2,p}^{\rm sph}\Big).
\end{equation}
In the following theorem $\mu$ denotes the M\"obius function and $\varphi$ denotes Euler's function.
\begin{theorem}\label{tildeENtheorem}
 Let $N>1$ be a squarefree, positive integer and $f_{2,N}\in V_{1/2}$ be as in \eqref{Eisleveleq6}. Then the function on the upper half plane corresponding to $E(\cdot,f_{2,N})$ is given by
 \begin{equation}\label{tildeENtheoremeq1}
  E_{2,N}(z)=1-24\frac{\mu(N)}{\varphi(N)}\sum_{n=1}^\infty\sigma(n')e^{2\pi inz},
 \end{equation}
 where $n'=\prod_{p\nmid N}p^{v_p(n)}$ is the part of $n$ relatively prime to $N$.
 It is a holomorphic modular form of weight $2$ with respect to $\Gamma_0(N)$.
\end{theorem}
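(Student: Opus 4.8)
The plan is to run the $k=2$ case of Theorem~\ref{Ektheorem} again, with the decisive new feature being that the constant term of the Eisenstein series now vanishes; this is exactly what upgrades the non-holomorphic $E_2$ to a holomorphic form at level $N$. First I would observe that $f_{2,N}$ lies in $U_2\subset V_{1/2}''$, since its archimedean component is $f_{1/2}^{(2)}$ of weight $2\neq0$. Hence the analytically continued Eisenstein series $E(\cdot,f_{2,N})$ is well-defined by \eqref{intpropseceq3} and admits the Fourier expansion \eqref{intpropseceq4}. Taking $g=\mat{1}{x}{}{1}\mat{y}{}{}{1}$ at the real place and $g_p=1$ at every finite place, I would note that each finite local Whittaker function $(W_{1/2}^\alpha f_{2,N})(1)$ vanishes unless $v_p(\alpha)\geq0$, by Lemma~\ref{calphaplemma} (for both the spherical and the Steinberg vectors), so the sum in \eqref{intpropseceq4} in fact runs only over $\alpha\in\Z$, $\alpha\neq0$.

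The crucial step is the vanishing of the constant term $A_{1/2}f_{2,N}$. Here I would apply \eqref{V12pintpropeq9} with $T=\{p : p\mid N\}$, which expresses $(A_{1/2}f_{2,N})(g)$ as an explicit prefactor times the product over $T$ of the local quantities $(A_{1/2,p}f_{1/2,p}^{\rm St})(g_p)$. Since $f_{1/2,p}^{\rm St}$ lies in the kernel of $A_{1/2,p}$ by \eqref{localpadiceq7}, each such factor is zero, and because $N>1$ there is at least one prime in $T$; hence $A_{1/2}f_{2,N}=0$. (Equivalently, in \eqref{globaleq5} the simple pole of $\zeta(2s)$ at $s=1/2$ is already cancelled by the archimedean zero of $A_{s,\infty}f_s^{(2)}$, and each $p\mid N$ contributes a further zero.) Consequently the constant term of $E(\cdot,f_{2,N})$ reduces to $f_{2,N}(g)$, whose value at $\mat{1}{x}{}{1}\mat{y}{}{}{1}$ is $y$, contributing the constant $1$ under the correspondence \eqref{Fouriereq10}.

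For the nonzero coefficients I would factorize $(W_{1/2}^\alpha f_{2,N})$ as in \eqref{globalWeq2}. The finite places contribute $\frac{\sigma(n')\mu(N)}{\zeta(2)|\alpha|\varphi(N)}$ by Lemma~\ref{Walphasigmalemma}(ii), while the archimedean place contributes the expression in \eqref{localrealWeq5} with $k=2$, namely $-4\pi^2y|\alpha|e^{-2\pi i\alpha(x+iy)}$, which is zero for $\alpha>0$. Multiplying these, using $\zeta(2)=\pi^2/6$ so that $-4\pi^2/\zeta(2)=-24$, and applying \eqref{Fouriereq10} (multiplication by $y^{-1}$), the coefficient of $e^{2\pi inz}$ for $n=-\alpha>0$ becomes $-24\frac{\mu(N)}{\varphi(N)}\sigma(n')$, which is precisely \eqref{tildeENtheoremeq1}.

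Finally, holomorphy and the level are read off from this expansion. Holomorphy follows because the expansion contains only the constant term and positive powers of $e^{2\pi iz}$: there is no $1/y$ term (the constant term vanished) and no antiholomorphic contribution (the $\alpha>0$ Whittaker functions vanish), together with the weight-$2$ property of $f_{1/2}^{(2)}$. Invariance under $\Gamma_0(N)$ follows because $f_{2,N}$ is right-invariant under $\Gamma_0(p)$ at each $p\mid N$, as $f_{1/2,p}^{\rm St}\in V_{1/2,p}^{\Gamma_0(\p)}$, and under $K_p$ at $p\nmid N$. I expect the only genuine subtlety to be the constant-term vanishing; every other ingredient is furnished by the local computations already established.
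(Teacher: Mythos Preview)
Your proposal is correct and follows essentially the same route as the paper's proof: expand via \eqref{intpropseceq4}, kill the intertwining term using that $f_{1/2,p}^{\rm St}$ lies in $\ker A_{1/2,p}$ for some $p\mid N$ (possible since $N>1$), and compute the remaining Whittaker terms by combining \eqref{localrealWeq5} with Lemma~\ref{Walphasigmalemma}(ii). Your write-up simply supplies more of the bookkeeping (why $M=1$ suffices, the explicit value $f_{2,N}(\mat{1}{x}{}{1}\mat{y}{}{}{1})=y$, the numerical simplification $-4\pi^2/\zeta(2)=-24$, and the holomorphy/level justification) that the paper leaves implicit.
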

\begin{proof}
Proceeding as in Theorem~\ref{Ektheorem}, we have
\begin{equation}\label{tildeENtheoremeq2}
  E(g,f_{2,N})=f_{2,N}(g)+(A_{1/2}f_{2,N})(g)+\sum_{\substack{\alpha\in\Z\\\alpha\neq0}}(W_{1/2}^\alpha f_{2,N})(g),
\end{equation}
and $E_{2,N}(z)=y^{-1}E(\mat{1}{x}{}{1}\mat{y}{}{}{1})$ for $z=x+iy$. Looking at \eqref{globaleq5}, we see that $(A_{1/2}f_{2,N})(g)=0$, because there is at least one finite place $p$ for which $f_{2,N,p}$ lies in the kernel of $A_{1/2,p}$ (observe the hypothesis $N>1$). The Whittaker functions are calculated as in \eqref{Ekeq4}, using \eqref{localrealWeq5} and \eqref{Walphasigmalemmaeq2}. The assertion follows.
\end{proof}

To understand the relationship between $E_{2,N}$ and $\tilde E_{2,N}$, we consider elements of the Hecke algebra $\H_{\rm fin}$. We define three elements of the local Hecke algebra $\H_p$ as follows,
\begin{equation}\label{Eisleveleq7}
 \alpha_p={\rm char}(K_p),\qquad\beta_p={\rm char}(\mat{1}{}{}{p}K_p),\qquad\gamma_p=\frac1{1-p}(\alpha_p-\beta_p),
\end{equation}
where ``${\rm char}$'' means ``characteristic function of''. For a square-free, positive integer $N$, let $B_N,\tilde B_N\in\H_{\rm fin}$ be defined by
\begin{align}
 \label{Eisleveleq10}\tilde B_N&=\bigg(\bigotimes_{p<\infty}\alpha_p\bigg)-\bigg(\bigotimes_{p\mid N}\beta_p\bigg)\otimes\bigg(\bigotimes_{p\nmid N}\alpha_p\bigg),\\
 \label{Eisleveleq11}B_N&=\bigg(\bigotimes_{p\mid N}\gamma_p\bigg)\otimes\bigg(\bigotimes_{p\nmid N}\alpha_p\bigg).
\end{align}
It follows from the definitions of $\tilde f_{2,N}$ and $f_{2,N}$ above that
\begin{equation}\label{Eisleveleq12}
 \tilde B_Nf_2=\tilde f_{2,N},\qquad B_Nf_2=f_{2,N},
\end{equation}
where as before $f_2\in V_{1/2}$ is the function from Theorem~\ref{Ektheorem}. For the second equality, note that $\gamma_p(f_{1/2,p}^{\rm sph})=f_{1/2,p}^{\rm St}$ by \eqref{localpadiceq7}.
\begin{proposition}\label{tildeENrelationprop}
 For all square-free, positive integers $N>1$,
 \begin{equation}\label{tildeENrelationpropeq1}
  E_{2,N}=-\frac{1}{\varphi(N)}\sum_{\substack{M\mid N\\M\neq1}}\mu(N/M)\tilde E_{2,M},\qquad \tilde E_{2,N}=-\sum_{\substack{M\mid N\\M\neq1}}\varphi(M)E_{2,M}.
 \end{equation}
\end{proposition}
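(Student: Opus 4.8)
The plan is to reduce the two identities to a single exact relation in the finite Hecke algebra $\H_{\rm fin}$ and then transport it to modular forms by linearity. Recall from \eqref{Eisleveleq12} that $\tilde f_{2,N}=\tilde B_N f_2$ and $f_{2,M}=B_M f_2$; that $E(\cdot,\tilde f_{2,N})$ and $E(\cdot,f_{2,M})$ are the automorphic forms corresponding to $\tilde E_{2,N}$ and $E_{2,M}$ respectively (the former by the paragraph following \eqref{Eisleveleq4}, the latter by Theorem~\ref{tildeENtheorem}); and that both $f\mapsto E(\cdot,f)$ and the classical correspondence \eqref{Fouriereq10} are linear. Hence any linear relation among the operators $\tilde B_N,B_M$ yields the corresponding linear relation among the modular forms $\tilde E_{2,N},E_{2,M}$, and it suffices to work algebraically with the Hecke operators.

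The heart of the argument is the second identity, which I would establish as an exact equality in $\H_{\rm fin}$. From $\gamma_p=\frac1{1-p}(\alpha_p-\beta_p)$ in \eqref{Eisleveleq7} we have $\beta_p=\alpha_p+(p-1)\gamma_p$. Substituting this into \eqref{Eisleveleq10} and expanding the tensor product over the primes dividing $N$, each such prime contributes either $\alpha_p$ or $(p-1)\gamma_p$; collecting the primes that contribute a $\gamma_p$-factor into a squarefree divisor $M\mid N$ gives
\begin{equation*}
 \bigg(\bigotimes_{p\mid N}\beta_p\bigg)\otimes\bigg(\bigotimes_{p\nmid N}\alpha_p\bigg)=\sum_{M\mid N}\Big(\prod_{p\mid M}(p-1)\Big)\bigg(\bigotimes_{p\mid M}\gamma_p\bigg)\otimes\bigg(\bigotimes_{p\nmid M}\alpha_p\bigg)=\sum_{M\mid N}\varphi(M)B_M,
\end{equation*}
since $\prod_{p\mid M}(p-1)=\varphi(M)$ for squarefree $M$ and the inner tensor is $B_M$ by \eqref{Eisleveleq11}. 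The $M=1$ term equals $\bigotimes_{p<\infty}\alpha_p$, which is exactly the first summand of $\tilde B_N$ in \eqref{Eisleveleq10}, so it cancels and leaves $\tilde B_N=-\sum_{M\mid N,\,M\neq1}\varphi(M)B_M$. Applying this to $f_2$ and transporting to modular forms as above yields the second relation.

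For the first relation I would Möbius-invert the second. Substituting the second relation for each $\tilde E_{2,M}$ into the right-hand side of the claimed first relation and interchanging the order of summation gives
\begin{equation*}
 -\frac1{\varphi(N)}\sum_{\substack{M\mid N\\M\neq1}}\mu(N/M)\tilde E_{2,M}=\frac1{\varphi(N)}\sum_{\substack{L\mid N\\L\neq1}}\varphi(L)E_{2,L}\sum_{L\mid M\mid N}\mu(N/M),
\end{equation*}
and the inner sum equals $\sum_{d\mid N/L}\mu(d)=[L=N]$ (writing $M=LM'$ with $M'\mid N/L$, which is legitimate because $N$ is squarefree). Only the term $L=N$ survives, with coefficient $\frac1{\varphi(N)}\varphi(N)=1$, so the right-hand side collapses to $E_{2,N}$; note that the constraint $M\neq1$ is automatic once $L\neq1$ and $L\mid M$, so it does not obstruct the inversion.

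I expect the only delicate point to be the bookkeeping in the Möbius inversion, namely keeping track of the excluded term $M=1$ and verifying that the inner divisor sum genuinely collapses to $[L=N]$; the Hecke-algebra expansion itself is a clean multiplicative (subset) expansion whose one subtlety—the cancellation of the $M=1$ term against $\bigotimes_{p<\infty}\alpha_p$—is transparent. An alternative, purely classical route is to compare the Fourier coefficients in \eqref{Eisleveleq2}--\eqref{Eisleveleq3} and \eqref{tildeENtheoremeq1} directly, but the Hecke-algebra argument is shorter and explains the appearance of $\varphi$ and $\mu$ conceptually.
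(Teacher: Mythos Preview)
Your proof is correct and follows essentially the same strategy as the paper: establish a linear relation among the Hecke operators $\tilde B_N$ and $B_M$ in $\H_{\rm fin}$ via a multiplicative expansion over the primes dividing $N$, then Möbius-invert and transport to modular forms via \eqref{Eisleveleq12}. The only cosmetic difference is the order: you expand $\beta_p=\alpha_p+(p-1)\gamma_p$ to obtain the second identity directly and then invert, whereas the paper expands $B_N$ in terms of auxiliary operators $X_M=\bigotimes_{p\mid M}\beta_p\otimes\bigotimes_{p\nmid M}\alpha_p$ to get the first identity (using $X_M=X_1-\tilde B_M$ and $\sum_{M\mid N}\mu(M)=0$) and then inverts.
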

\begin{proof}
We calculate
\begin{equation}\label{Eisleveleq13}
 B_N=\bigg(\prod_{p\mid N}\frac1{1-p}\bigg)\bigg(\bigotimes_{p\mid N}(\alpha_p-\beta_p)\bigg)\otimes\bigg(\bigotimes_{p\nmid N}\alpha_p\bigg)=\frac{\mu(N)}{\varphi(N)}\sum_{M\mid N}\mu(M)X_M,
\end{equation}
where
\begin{equation}\label{Eisleveleq14}
 X_M=\bigg(\bigotimes_{p\mid M}\beta_p\bigg)\otimes\bigg(\bigotimes_{p\nmid M}\alpha_p\bigg).
\end{equation}
Note that $X_M=X_1-\tilde B_M$ by \eqref{Eisleveleq10}. Since $\sum_{M\mid N}\mu(M)=0$ for $N>1$, it follows that
\begin{equation}\label{Eisleveleq15}
 B_N=-\frac{1}{\varphi(N)}\sum_{M\mid N}\mu(N/M)\tilde B_M
\end{equation}
for $N>1$. Let us temporarily redefine $B_1$ to be zero, so that \eqref{Eisleveleq15} also holds for $N=1$. Then, by M\"obius inversion,
\begin{equation}\label{Eisleveleq16}
 \tilde B_N=-\sum_{M\mid N}\varphi(M)B_M.
\end{equation}
We apply both sides of \eqref{Eisleveleq15} and \eqref{Eisleveleq16} to $f_2$, and get from \eqref{Eisleveleq12} that
\begin{equation}\label{Eisleveleq17}
 f_{2,N}=-\frac{1}{\varphi(N)}\sum_{\substack{M\mid N\\M\neq1}}\mu(N/M)\tilde f_{2,M},\qquad \tilde f_{2,N}=-\sum_{\substack{M\mid N\\M\neq1}}\varphi(M)f_{2,M},
\end{equation}
for $N>1$. Now all we have to do is build the adelic Eisenstein series on both sides of these equations. The equality of the adelic Eisenstein series implies the equality of the classical Eisenstein series in \eqref{tildeENrelationpropeq1}.
\end{proof}
\subsection{Global representations generated by Eisenstein series}\label{Sec:globalrepEis}
Some of the results of this section are also contained in \cite{Horinaga2021}. The following, which is more or less well known, is a consequence of Theorem~\ref{Ektheorem}.
\begin{corollary}\label{Ektheoremcor}
 If $k\geq4$ is an even integer, then the $\H$-module $\pi$ generated by the automorphic form corresponding to $E_k$ is irreducible. We have $\pi\cong\bigotimes\pi_v$, with $\pi_\infty=\mathcal{D}_{k-1}^{\rm hol}$, the discrete series representation of lowest weight $k$, and $\pi_p=|\cdot|_p^{(k-1)/2}\times|\cdot|_p^{(1-k)/2}$, an irreducible principal series representation, for all $p<\infty$.
\end{corollary}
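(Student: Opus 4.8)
The plan is to exploit the fact that for $k\geq4$ we have $s=(k-1)/2>1/2$, so that the Eisenstein series lies in the region of absolute convergence and the map $\Psi\colon f\mapsto E(\cdot,f)$ is a genuine homomorphism of $\H$-modules (as noted immediately after Lemma~\ref{intpropprop}). Consequently the $\H$-module $\H\Phi_k$ generated by $\Phi_k:=E(\cdot,f_k)$ equals $\Psi(\H f_k)$, the image of the submodule of $V_{(k-1)/2}$ generated by the pure tensor $f_k$ of Theorem~\ref{Ektheorem}. I would therefore (i) identify the submodule $\H f_k$ and show it is irreducible, and then (ii) transport irreducibility across $\Psi$ by a Schur-type argument.

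For step (i), I would analyze $V_{(k-1)/2}\cong\bigotimes_v V_{(k-1)/2,v}$ place by place, using that $\H(\otimes v_v)=\bigotimes_v(\H_v v_v)$ for a pure tensor, so that it suffices to determine the local submodules. At a finite prime $p$ we have $\eta=|\cdot|_p^{(k-1)/2}$, hence $\eta^2=|\cdot|_p^{k-1}$ with $k-1\geq3$; thus $\eta^2\neq|\cdot|_p^{\pm1}$ and $V_{(k-1)/2,p}$ is irreducible, so the spherical vector generates all of it, giving $\pi_p=|\cdot|_p^{(k-1)/2}\times|\cdot|_p^{(1-k)/2}$. At the archimedean place, $f_{(k-1)/2,\infty}^{(k)}$ has weight $k$, hence maps to zero in the quotient $\mathcal{F}_{k-1}$ of \eqref{realexacteq1} (whose weights lie in $[-k+2,\dots,k-2]$) and so lies in the submodule $\mathcal{D}_{k-1}^{\rm hol}$. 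Being a nonzero vector of the irreducible $\mathcal{D}_{k-1}^{\rm hol}$, it generates it; concretely, \eqref{RHLVslemmaeq3} gives $Lf_{(k-1)/2}^{(k)}=0$, so $f^{(k)}$ is a lowest weight vector and repeated application of $R$, together with $\varepsilon_-$, sweeps out exactly the weights $\pm k,\pm(k+2),\dots$ of $\mathcal{D}_{k-1}^{\rm hol}$. Hence $\H f_k=\mathcal{D}_{k-1}^{\rm hol}\otimes\bigotimes_{p<\infty}V_{(k-1)/2,p}$, which is an irreducible $\H$-module as a (restricted) tensor product of irreducible admissible representations.

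For step (ii), I would observe that $\Psi$ restricted to the irreducible submodule $W:=\H f_k$ is an $\H$-homomorphism whose kernel is a submodule of $W$. Since $\Psi(f_k)=\Phi_k$ corresponds to the nonzero modular form $E_k$ by Theorem~\ref{Ektheorem}, this kernel is not all of $W$; by irreducibility it is therefore zero, so $\Psi|_W$ is injective and $\H\Phi_k=\Psi(W)\cong W$ is irreducible. Reading off the local factors from step (i) then yields $\pi_\infty=\mathcal{D}_{k-1}^{\rm hol}$ and $\pi_p=|\cdot|_p^{(k-1)/2}\times|\cdot|_p^{(1-k)/2}$, as claimed.

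I expect the only genuinely delicate point to be the verification in step (i) that the weight-$k$ vector generates all of $\mathcal{D}_{k-1}^{\rm hol}$ rather than escaping into the finite-dimensional constituent $\mathcal{F}_{k-1}$; this is precisely what the vanishing $Lf_{(k-1)/2}^{(k)}=0$ from \eqref{RHLVslemmaeq3} guarantees. Everything else rests on absolute convergence for $s>1/2$ (so that the analytic-continuation subtleties of the $k=2$ case do not intervene and $\Psi$ is honestly $\H$-intertwining) and on the standard irreducibility of tensor products of irreducible admissible representations.
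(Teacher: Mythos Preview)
Your proposal is correct and follows essentially the same argument as the paper: identify the irreducible submodule $W=\mathcal{D}_{k-1}^{\rm hol}\otimes\bigotimes_{p<\infty}V_{(k-1)/2,p}$ containing $f_k$, note that the Eisenstein map is $\H$-intertwining by absolute convergence, and conclude it is an isomorphism onto its image since it is nonzero on the irreducible $W$. Your write-up supplies a bit more detail (the explicit check that $\eta^2\neq|\cdot|_p^{\pm1}$ and the use of $Lf_{(k-1)/2}^{(k)}=0$), but the strategy is the same.
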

\begin{proof}
Let $f$ be as in Theorem~\ref{Ektheorem}. Recall from \eqref{realexacteq1} that the archimedean component $f_{(k-1)/2,\infty}^{(k)}$ is the lowest weight vector in the $\H_\infty$-submodule $\mathcal{D}_{k-1}^{\rm hol}$ of $V_{(k-1)/2,\infty}$. All $V_{(k-1)/2,p}$ for $p<\infty$ are irreducible. Hence
\begin{equation}\label{Ektheoremcoreq1}
 W:=\mathcal{D}_{k-1}^{\rm hol}\otimes\Big(\bigotimes_{p<\infty}V_{(k-1)/2,p}\Big)
\end{equation}
is an irreducible $\H$-module containing $f_k$. The map $f\mapsto E(\cdot,f)$ from $W$ to the space of automorphic forms is intertwining, since the summation \eqref{Fouriereq1} is absolutely convergent. As we saw, it is a non-zero map, because $E(\cdot,f_k)$ is the automorphic form corresponding to $E_k$. Hence the map is an isomorphism onto its image, proving the result.
\end{proof}

To prove a similar result for $k=2$ is more difficult, since the map $f\mapsto E(\cdot,f)$ is not quite an intertwining operator for $s=1/2$; see Lemma~\ref{intpropprop}. We require the following preparations.
\begin{lemma}\label{algebraactionlemma3}
 For $i\in\{1,\ldots,n\}$ let $G_i$ be a group acting on a vector space $V_i$. Then $G=G_1\times\ldots\times G_n$ acts on $V:=V_1\otimes\ldots\otimes V_n$. Assume that each $V_i$ is a $G_i$-module of length $2$, with a unique simple submodule $W_i$, and with $V_i/W_i\ncong W_i$. Then $W:=W_1\otimes\ldots\otimes W_n$ is the unique simple submodule of $V$.
\end{lemma}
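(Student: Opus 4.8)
The plan is to argue by induction on $n$, the case $n=1$ being exactly the hypothesis. For the inductive step I would peel off the last factor, writing $V=A\otimes V_n$ with $A=V_1\otimes\cdots\otimes V_{n-1}$ a module for $G'=G_1\times\cdots\times G_{n-1}$, and $W'=W_1\otimes\cdots\otimes W_{n-1}$. By the inductive hypothesis $W'$ is the unique simple $G'$-submodule of $A$, so that $\mathrm{soc}_{G'}(A)=W'$ is simple. The goal then reduces to showing, for $G=G'\times G_n$, that $\mathrm{soc}_G(V)=W'\otimes W_n$ and that this module is simple; together these say $W=W'\otimes W_n$ is the unique simple submodule.

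The first key step is to restrict attention to the last factor. Since $G_n$ acts on $V=A\otimes V_n$ only through $V_n$, as a $G_n$-module $V$ is a direct sum of copies of $V_n$ indexed by a basis of $A$, whence $\mathrm{soc}_{G_n}(V)=W_n\otimes A$, using that $W_n$ is the unique simple submodule of $V_n$. Now let $S\subseteq V$ be any simple $G$-submodule. Its $G_n$-socle $\mathrm{soc}_{G_n}(S)$ is stable under $G'$, because $G'$ commutes with $G_n$ and hence acts on $S$ by $G_n$-module endomorphisms, which preserve the canonical socle; thus $\mathrm{soc}_{G_n}(S)$ is a $G$-submodule of $S$. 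As $S$ is simple and $\mathrm{soc}_{G_n}(S)\neq0$, we get $\mathrm{soc}_{G_n}(S)=S$, i.e.\ $S$ is $G_n$-semisimple, so $S\subseteq\mathrm{soc}_{G_n}(V)=W_n\otimes A$.

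The second key step is to analyse $W_n\otimes A$. Here I would invoke that $W_n$ is absolutely simple, i.e.\ $\mathrm{End}_{G_n}(W_n)$ consists of scalars; in the application this is Schur's lemma for the irreducible smooth $\C$-representations occurring as the $W_i$ (Steinberg and discrete series), and is implicit in the earlier-cited description of the irreducible subquotients of $V_{1/2}$ as external tensor products. Absolute simplicity makes $B'\mapsto W_n\otimes B'$ a lattice isomorphism from the $G'$-submodules of $A$ onto the $G$-submodules of $W_n\otimes A$; in particular $\mathrm{soc}_G(W_n\otimes A)=W_n\otimes\mathrm{soc}_{G'}(A)=W_n\otimes W'$, which is simple since $W'$ is simple and $W_n$ absolutely simple. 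Combined with the previous step, $S\subseteq W_n\otimes W'=W$, and simplicity of $W$ forces $S=W$, closing the induction. The hypothesis $V_i/W_i\ncong W_i$ enters to ensure that the $2^n$ composition factors $\bigotimes X_i$ with $X_i\in\{W_i,V_i/W_i\}$ are pairwise non-isomorphic — seen by restricting to each $G_i$, where $\bigotimes_j X_j$ is $X_i$-isotypic — so that $W$ genuinely sits at the bottom and no higher factor can reappear as a submodule.

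The step I expect to be the main obstacle is the second one: passing from ``$S\subseteq W_n\otimes A$'' to ``$S=W$'' rests on the irreducibility of external tensor products of the simple constituents, equivalently on the endomorphism rings of the $W_i$ being scalar. This is automatic in the paper's setting but is the one genuinely representation-theoretic input; the $G_n$-socle computation and the commuting-action argument are purely formal. A secondary point to verify is that $\mathrm{soc}_{G_n}(S)\neq0$, which holds because a simple $G$-module restricts to a $G_n$-isotypic (hence socle-nonzero) module once one knows it is an external tensor product.
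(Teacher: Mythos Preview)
Your argument is essentially correct and follows a genuinely different route from the paper's. The paper does not induct: it first observes that the Jordan--H\"older factors of $V$ are the $2^n$ tensor products $U_1\otimes\cdots\otimes U_n$ and that these are pairwise non-isomorphic (this is exactly where the hypothesis $V_i/W_i\ncong W_i$ is used), so it suffices to show that any simple submodule $X\hookrightarrow V$ is \emph{isomorphic} to $W$. For each index~$i$ the paper then constructs, by a pure linear-algebra trick with linear forms on $V_1\otimes\cdots\otimes\widehat{V_i}\otimes\cdots\otimes V_n$, a nonzero $G_i$-map $U_i\to V_i$, forcing $U_i\cong W_i$. Your socle-and-induction approach is more structural and, in fact, slightly sharper: once you know $S\subseteq A\otimes W_n$ and invoke the lattice isomorphism $B'\mapsto B'\otimes W_n$, the hypothesis $V_i/W_i\ncong W_i$ is never used, so your last paragraph is misplaced --- your proof establishes the conclusion without it. Both arguments, however, tacitly rely on tensor products of the simple constituents being simple (equivalently, on Schur's lemma giving scalar endomorphism rings), which you rightly flag and which the paper leaves implicit.

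One genuine wrinkle: your justification that $\mathrm{soc}_{G_n}(S)\neq 0$ is circular, since you appeal to $S$ being an external tensor product, which is what you are proving. The fix is immediate: any nonzero element of $S$ lies in a finite sub-sum of $A\otimes V_n\cong\bigoplus V_n$, so the cyclic $G_n$-submodule it generates has finite length and therefore contains a simple $G_n$-submodule. With that patch your proof goes through.
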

\begin{proof}
The constituents of $V$ are the modules $U_1\otimes\ldots\otimes U_n$, where each $U_i$ is isomorphic to either $W_i$ or $V_i/W_i$. In particular, the constituents are pairwise non-isomorphic. It is therefore enough to show that any simple $G$-submodule $X$ of $V$ is \emph{isomorphic} to $W$. (Then automatically $X=W$, since otherwise we can construct a composition series $0\subset W\subset X\oplus W\subset\ldots\subset V$, in which the isomorphism class of $W$ would occur twice.)

Hence assume that $X=U_1\otimes\ldots\otimes U_n$ with $U_i$ isomorphic to either $W_i$ or $V_i/W_i$, and suppose we have an injection $\varphi:X\to V$. For $i\in\{1,\ldots,n-1\}$, choose any non-zero $u_i\in U_i$. Then we have an injection of vector spaces
\begin{equation}\label{algebraactionlemma3eq3}
 \alpha:\:U_n\longrightarrow U_1\otimes\ldots\otimes U_{n-1}\otimes U_n,\qquad
 \alpha(u)=u_1\otimes\ldots\otimes u_{n-1}\otimes u.
\end{equation}
The map \eqref{algebraactionlemma3eq3} commutes with the action of $G_n$. Pick some non-zero $u_0\in U_n$, and write
\begin{equation}\label{algebraactionlemma3eq4}
 \varphi(\alpha(u_0))=\sum_{j=1}^mw_j\otimes v_j,\qquad w_j\in V_1\otimes\ldots\otimes V_{n-1},\;v_j\in V_n.
\end{equation}
We may assume $m$ to be minimal, in which case $w_1,\ldots,w_m$ (and also $v_1,\ldots,v_m$) are linearly independent. Choose a linear form $f$ on $V_1\otimes\ldots\otimes V_{n-1}$ such that $f(w_1)=1$ and $f(w_j)=0$ for $j\in\{2,\ldots,m\}$. Define
\begin{equation}\label{algebraactionlemma3eq5}
 \beta:\:(V_1\otimes\ldots\otimes V_{n-1})\otimes V_n\longrightarrow V_n,\qquad\beta(w\otimes v)=f(w)v.
\end{equation}
Then $\beta(\varphi(\alpha(u_0)))=v_1$. In particular, the linear map $\beta\circ\varphi\circ\alpha:U_n\to V_n$ is not zero. It is easy to check that this map commutes with the action of $G_n$ on both sides. Since $W_n$ is the unique irreducible subspace of $V_n$, it follows that $U_n\cong W_n$.

In the same manner one proves $U_i\cong W_i$ as $G_i$-representations for all $i$.
\end{proof}

Let $f_2$ be as in Theorem~\ref{Ektheorem}, i.e., 
\begin{equation}\label{f2defeq}
 f_2=f_{1/2,\infty}^{(2)}\otimes\Big(\otimes_{p<0}f_{1/2,p}^{\rm sph}\Big).
\end{equation}
Then $\Phi_2:=E(\cdot,f_2)$ is the automorphic form corresponding to $E_2$. For a prime $p$ we let $\mathcal{D}_p\cong\St_{\GL(2,\Q_p)}$ be the infinite-dimensional irreducible, invariant subspace of $V_{1/2,p}$.

\begin{lemma}\label{HfinPhilemma}
 Any $\H_{\rm fin}$-map
 \begin{equation}\label{HfinPhilemmaeq1}
  \bigotimes_{p<\infty}V_{1/2,p}\longrightarrow\H\Phi_2
 \end{equation}
 whose image contains $R^\ell\Phi_2$ or $\varepsilon_-R^\ell\Phi_2$ for some $\ell\geq0$ is injective.
\end{lemma}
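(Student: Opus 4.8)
The plan is to reduce the injectivity of such a map $T\colon M\to\H\Phi_2$, where $M:=\bigotimes_{p<\infty}V_{1/2,p}$, to a nonvanishing statement on finite truncations, and then to verify that nonvanishing by producing an explicit nonzero Eisenstein series. First I would write $M$ as the increasing union of the finite-length modules $M_S:=\big(\bigotimes_{p\in S}V_{1/2,p}\big)\otimes\big(\bigotimes_{p\notin S}\C f^{\rm sph}_{1/2,p}\big)$ over the algebra $\H_S=\bigotimes_{p\in S}\H_p$, where $S$ runs over finite sets of primes; since every vector of $M$ is spherical at almost all places, $M=\bigcup_S M_S$, and $T$ is injective on $M$ as soon as it is injective on each $M_S$. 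By Lemma~\ref{algebraactionlemma3}, applied with $W_p=\mathcal{D}_p\cong\St_{\GL(2,\Q_p)}$ and $V_{1/2,p}/\mathcal{D}_p\cong\triv_{\GL(2,\Q_p)}\ncong\mathcal{D}_p$, the $\H_S$-module $\bigotimes_{p\in S}V_{1/2,p}$ has a unique simple submodule, so $M_S$ has the unique simple submodule $\mathrm{soc}(M_S)=\big(\bigotimes_{p\in S}\mathcal{D}_p\big)\otimes\big(\bigotimes_{p\notin S}\C f^{\rm sph}_{1/2,p}\big)$. Because this socle is essential in the finite-length module $M_S$, it suffices to show that $T$ does not vanish on $\mathrm{soc}(M_S)$: a nonzero $\H_S$-map out of the simple module $\mathrm{soc}(M_S)$ is automatically injective, and essentiality then forces $\ker(T|_{M_S})=0$.

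Second, I would exhibit a concrete element of $\mathrm{soc}(M_S)$ on which $T$ is nonzero, using the Hecke operator $h_S:=\big(\bigotimes_{p\in S}\gamma_p\big)\otimes\big(\bigotimes_{p\notin S}\alpha_p\big)\in\H_{\rm fin}$, with $\alpha_p,\gamma_p$ as in \eqref{Eisleveleq7}. Since $\gamma_p$ annihilates the trivial quotient $\mathcal{F}_p$ (both $\alpha_p$ and $\beta_p$ act on $\triv_{\GL(2,\Q_p)}$ by the same volume) and hence maps $V_{1/2,p}$ into $\mathcal{D}_p$, while $\alpha_p$ projects onto the spherical line $\C f^{\rm sph}_{1/2,p}$, the operator $h_S$ carries all of $M$ into $\mathrm{soc}(M_S)$. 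Now let $x\in M$ satisfy $T(x)=R^\ell\Phi_2$ (the case $\varepsilon_-R^\ell\Phi_2$ is identical, $\varepsilon_-$ commuting with the finite operator $h_S$). Then $h_Sx\in\mathrm{soc}(M_S)$ and, by $\H_{\rm fin}$-equivariance of $T$, one has $T(h_Sx)=h_S\,R^\ell\Phi_2$. So everything comes down to proving $h_S\,R^\ell\Phi_2\neq0$ for every finite $S$.

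Third, I would compute $h_S R^\ell\Phi_2$ explicitly on the Eisenstein side. Using \eqref{intproppropeq3} repeatedly (legitimate because $R^{j}f_2\in U_{2+2j}$ has nonzero weight), one gets $R^\ell\Phi_2=E(\cdot,R^\ell f_2)$ with $R^\ell f_2$ a nonzero multiple of $f^{(2+2\ell)}_{1/2,\infty}\otimes\big(\bigotimes_{p<\infty}f^{\rm sph}_{1/2,p}\big)$. Applying $h_S$ and invoking the intertwining property \eqref{intproppropeq1} (resp.\ \eqref{intproppropeq5} in the $\varepsilon_-$ case), together with $\gamma_p f^{\rm sph}_{1/2,p}=f^{\rm St}_{1/2,p}$ from \eqref{localpadiceq7}, yields $h_S R^\ell\Phi_2=c_\ell\,E(\cdot,g_S)$ with $c_\ell\neq0$, where $g_S=f^{(2+2\ell)}_{1/2,\infty}\otimes\big(\bigotimes_{p\in S}f^{\rm St}_{1/2,p}\big)\otimes\big(\bigotimes_{p\notin S}f^{\rm sph}_{1/2,p}\big)$. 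This is an Eisenstein series of exactly the type treated in Theorem~\ref{tildeENtheorem} (for $\ell=0$ it literally is $E_{2,N}$ with $N=\prod_{p\in S}p$): its constant term vanishes because the global intertwining operator kills $g_S$ whenever $S\neq\emptyset$ (a Steinberg vector lies in the kernel of $A_{1/2,p}$), while its nonconstant Fourier coefficients factor as the products of local Whittaker functions computed in Lemma~\ref{Walphasigmalemma} and \eqref{localrealWeq10}. Choosing a negative $\alpha$ with prescribed valuations at the primes of $S$ makes all these local factors simultaneously nonzero, so $E(\cdot,g_S)\neq0$; the case $S=\emptyset$ is covered directly by $R^\ell\Phi_2\neq0$.

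The main obstacle is this third step: one must be sure the analytically continued Eisenstein series $E(\cdot,g_S)$ is genuinely nonzero at the non-holomorphic point $s=1/2$ for every weight $2+2\ell$, which for $\ell\geq1$ requires the general archimedean Whittaker formula \eqref{localrealWeq10} rather than the holomorphic computation \eqref{localrealWeq5}, and requires matching a nonvanishing archimedean factor with a simultaneously nonzero finite part from Lemma~\ref{Walphasigmalemma}. A secondary point needing care is the passage between the infinite restricted tensor product and its finite truncations $M_S$, namely checking that $\mathrm{soc}(M_S)$ is indeed essential in each $M_S$ and that injectivity on every $M_S$ yields injectivity on $M$.
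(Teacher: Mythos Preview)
Your proposal is correct and follows essentially the same route as the paper. Both arguments (i) use Lemma~\ref{algebraactionlemma3} to locate the Steinberg socle $\bigotimes_{p\mid N}\mathcal{D}_p$ inside any nonzero submodule, (ii) apply the Hecke element $B_N$ (your $h_S$) to the spherical vector to land in that socle, and (iii) reduce the injectivity to the nonvanishing of $B_NR^\ell\Phi_2=R^\ell B_N\Phi_2$. The paper packages (i)--(ii) as a contradiction (a nonzero kernel would force $B_Nf$ into it), whereas you phrase it via truncations $M_S$ and essentiality of the socle; these are equivalent.

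The one place where your write-up works harder than necessary is step~3. You propose to check $E(\cdot,g_S)\neq0$ for arbitrary weight $2+2\ell$ by invoking the general archimedean Whittaker formula \eqref{localrealWeq10}. The paper instead first identifies $B_N\Phi_2$ with the \emph{holomorphic} modular form $E_{2,N}$ of Theorem~\ref{tildeENtheorem}, and then applies $R^\ell$ on the upper half plane via Proposition~\ref{diffopHproposition}: since $E_{2,N}$ has nonzero $q$-expansion, iterating $R_k=k/y+2i\,\partial_\tau$ visibly cannot kill it. This sidesteps any higher-weight Whittaker computation and disposes of the ``main obstacle'' you flag.
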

\begin{proof}
Let $\phi:\:\bigotimes_{p<\infty}V_{1/2,p}\longrightarrow\H_{\rm fin}\Phi_2$ be an $\H_{\rm fin}$-map whose image contains $R^\ell\Phi_2$ or $\varepsilon_-R^\ell\Phi_2$ for some $\ell\geq0$. By composing with $\varepsilon_-$ if necessary, we may assume that the image of $\phi$ contains $R^\ell\Phi_2$. Let $\rho=\otimes\rho_p$ be the representation of $\H_{\rm fin}$ on $\bigotimes_{p<\infty}V_{1/2,p}$. Let $f\in\bigotimes_{p<\infty}V_{1/2,p}$ be a vector with $\phi(f)=R^\ell\Phi_2$. Then
\begin{equation}\label{HfinPhilemmaeq2}
 \int\limits_{G(\Z_p)}\rho_p(g)f\,dg
\end{equation}
also maps to $R^\ell\Phi_2$. We may thus assume that $f=\otimes_{p<\infty}f_{1/2,p}^{\rm sph}$.

Assume that $\phi$ has a non-trivial kernel $K$; we will obtain a contradiction. Considering $\H_{\rm fin}v$ for any non-zero vector $v\in K$, we see that $K$ contains an $\H_{\rm fin}$-invariant subspace of the form $W\otimes\bigotimes_{p\nmid N}V_{1/2,p}$, where $N>1$ is a square-free positive integer, and $W\subset\bigotimes_{p\mid N}V_{1/2,p}$ is invariant under $\bigotimes_{p\mid N}\H_p$. By Lemma~\ref{algebraactionlemma3}, $W$ contains $\bigotimes_{p\mid N}\mathcal{D}_p$. Hence
\begin{equation}\label{HfinPhilemmaeq3}
 \bigg(\bigotimes_{p\mid N}\mathcal{D}_p\bigg)\otimes\bigg(\bigotimes_{p\nmid N}V_{1/2,p}\bigg)\subset K.
\end{equation}
Let $B_N$ be as in \eqref{Eisleveleq11}. We have
\begin{equation}\label{HfinPhilemmaeq4}
 B_Nf=\Big(\otimes_{p\mid N}f_{1/2,p}^{\rm St}\Big)\otimes\Big(\otimes_{p\nmid N}f_{1/2,p}^{\rm sph}\Big)
\end{equation}
by definition of $B_N$. By \eqref{HfinPhilemmaeq3}, $B_Nf\in K$, so that $\phi(B_Nf)=0$. On the other hand, $\phi(B_Nf)=B_N\phi(f)=B_NR^\ell\Phi_2=R^\ell B_N\Phi_2$. By \eqref{Eisleveleq12}, $B_N\Phi_2$ is the automorphic form corresponding to $E_{2,N}$. Using Proposition~\ref{diffopHproposition} and \eqref{tildeENtheoremeq1}, it is easy to see that $R^\ell B_N\Phi_2\neq0$. This is the desired contradiction.
\end{proof}

For the next result, recall the definitions \eqref{Ukdefeq} of $U_k$ and \eqref{V12ppropeq6} of $V_{1/2}''$.

\begin{proposition}\label{V12ppEinjectiveprop}
 Consider the map $\mathbf{E}$ from $V_{1/2}''$ to the space of automorphic forms given by $f\mapsto E(\cdot,f)$. Let $\Phi_2$ be the image of $f_2$.
 \begin{enumerate}
  \item The image of $\mathbf{E}$ is contained in $\H\Phi_2$, and is $1$-codimensional in this space. An element of $\H\Phi_2$ which is not in the image is the constant function $1$.
  \item The map $\mathbf{E}$ is injective.
 \end{enumerate}
\end{proposition}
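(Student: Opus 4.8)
The plan is to exploit that $\mathbf{E}$ preserves the archimedean weight: for $f\in U_k$ the right $r(\theta)$-equivariance of $f$ shows, via \eqref{intpropseceq4}, that $E(\cdot,f)$ again has weight $k$. Thus $\mathbf{E}$ respects the weight decomposition $V_{1/2}''=\bigoplus_{k\neq0}U_k$, and since distinct $U_k$ map into distinct weight spaces it suffices, for injectivity, to prove that each $\mathbf{E}|_{U_k}$ is injective. Identifying $U_k$ with $\bigotimes_{p<\infty}V_{1/2,p}$, the restriction $\mathbf{E}|_{U_k}$ is $\H_{\rm fin}$-intertwining by Lemma~\ref{intpropprop}(i), so by Lemma~\ref{HfinPhilemma} I only need its image to contain $R^\ell\Phi_2$ or $\varepsilon_-R^\ell\Phi_2$ for some $\ell\geq0$.

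For $k=2+2\ell$ I would note, using \eqref{RHLVslemmaeq1}, that $R^\ell f_2$ is a nonzero multiple of the distinguished vector $v_k:=f_{1/2}^{(k)}\otimes\big(\bigotimes_{p<\infty}f_{1/2,p}^{\rm sph}\big)$ generating $U_k$ over $\H_{\rm fin}$, and that iterating \eqref{intproppropeq3} (all intermediate weights being $\geq2\neq-2$) gives $\mathbf{E}(R^\ell f_2)=R^\ell\Phi_2$. For $k=-2-2\ell$ I would apply $L^\ell\varepsilon_-$ instead: by \eqref{RHLVslemmaeq4} and \eqref{RHLVslemmaeq3} this is again a nonzero multiple of $v_k$, and iterating \eqref{intproppropeq5}, \eqref{intproppropeq4} (intermediate weights $\leq-2\neq2$) together with the relation $L\varepsilon_-=\varepsilon_-R$ (from $\varepsilon_-X\varepsilon_-=\mat{1}{}{}{-1}X\mat{1}{}{}{-1}$) gives $\mathbf{E}(L^\ell\varepsilon_-f_2)=\varepsilon_-R^\ell\Phi_2$. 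In either case Lemma~\ref{HfinPhilemma} yields injectivity of $\mathbf{E}|_{U_k}$, and hence of $\mathbf{E}$; this proves (ii). The same computations show $\mathbf{E}(U_k)=\H_{\rm fin}\,\mathbf{E}(v_k)\subset\H\Phi_2$, so $\mathbf{E}(V_{1/2}'')\subset\H\Phi_2$.

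For (i) it remains to show $\H\Phi_2=\C\cdot1+\mathbf{E}(V_{1/2}'')$, and I would do this by checking that $\C\cdot1+\mathbf{E}(V_{1/2}'')$ is $\H$-stable. Stability under $\H_{\rm fin}$, $H$ and $\varepsilon_-$ is immediate from Lemma~\ref{intpropprop}, as is stability under $R,L$ on every slice except the two where the intertwining property fails, namely $L$ on $\mathbf{E}(U_2)$ and $R$ on $\mathbf{E}(U_{-2})$. The crux is that both defects are constants. For $f'\in U_2$ one has $Lf'=0$, and $L$ kills the holomorphic Whittaker contribution, so $LE(\cdot,f')=L(A_{1/2}f')$; by \eqref{V12pintpropeq9} this is the archimedean vector $f_{-1/2}^{(2)}$ times a finite part assembled from the local operators $A_{1/2,p}f'_p$, and since each $A_{1/2,p}$ maps into the one-dimensional trivial (constant) subspace of $V_{-1/2,p}$ described after \eqref{padicexacteq2}, the finite part is constant; as $Lf_{-1/2}^{(2)}=-f_{-1/2}^{(0)}$ is the constant function, $LE(\cdot,f')\in\C\cdot1$. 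Conjugating by $\varepsilon_-$ (using $R\varepsilon_-=\varepsilon_-L$) reduces the $R$-on-$U_{-2}$ defect to the same statement. Hence $\C\cdot1+\mathbf{E}(V_{1/2}'')$ is $\H$-stable and contains $\Phi_2$, so it contains $\H\Phi_2$; with the reverse inclusion and the fact that $L\Phi_2$ is a nonzero constant (visible from the $-3/\pi$ in \eqref{Ekeq5}), this gives $\H\Phi_2=\C\cdot1+\mathbf{E}(V_{1/2}'')$. Finally, every element of $\mathbf{E}(V_{1/2}'')$ has weight $\neq0$ while $1$ has weight $0$, so the sum is direct and the image is exactly $1$-codimensional with $1$ spanning a complement.

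I expect the main obstacle to be the identification of the two defect terms $LE(\cdot,f')$ and $RE(\cdot,f)$ as genuine constants rather than arbitrary weight-$0$ automorphic forms; this is precisely where one needs the local fact that $A_{1/2,p}$ lands in the constant functions, together with the vanishing of $L$ on the holomorphic Whittaker part.
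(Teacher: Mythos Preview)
Your argument is correct, and part~(ii) is essentially identical to the paper's: both reduce to Lemma~\ref{HfinPhilemma} via the weight decomposition, after verifying that $\mathbf{E}(v_k)$ equals $R^\ell\Phi_2$ or $\varepsilon_-R^\ell\Phi_2$ through iterated use of Lemma~\ref{intpropprop}.

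For part~(i) you take a genuinely different route. The paper argues more directly: from Proposition~\ref{diffopHproposition} one sees that $L\Phi_2$ is constant, so by PBW the space $\mathcal{U}(\mathfrak{g})\Phi_2$ has basis $L\Phi_2,\Phi_2,R\Phi_2,\ldots$; then $\H\Phi_2=\H_{\rm fin}\H_\infty\Phi_2$ yields the decomposition $\C\oplus\bigoplus_\ell R^\ell\H_{\rm fin}\Phi_2\oplus\bigoplus_\ell\varepsilon_-R^\ell\H_{\rm fin}\Phi_2$, and one identifies each summand with $\mathbf{E}(U_k)$. By contrast, you show that $\C+\mathbf{E}(V_{1/2}'')$ is $\H$-stable by computing the defect terms $L\,\mathbf{E}(U_2)$ and $R\,\mathbf{E}(U_{-2})$ explicitly and checking they land in $\C$. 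Your computation is correct: the key local input, that each $A_{1/2,p}$ has image in the constant functions of $V_{-1/2,p}$, together with $Lf_{-1/2}^{(2)}=-f_{-1/2}^{(0)}=-1$, does force $L\,E(\cdot,f')$ to be constant. The paper's PBW approach is more economical since it only needs $L\Phi_2\in\C$ for the single vector $\Phi_2$ and lets the algebra do the rest, whereas your approach works out the defect across all of $U_2$; on the other hand, your argument makes explicit exactly where and how the failure of the intertwining property produces the extra constant, which is conceptually satisfying.
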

\begin{proof}
i) It follows from Proposition~\ref{diffopHproposition} that $L\Phi_2$ is a constant automorphic form. Hence, if we denote the space of constant automorphic forms simply by $\C$, then $\C\subset\H\Phi_2$.

By the PBW theorem, $\mathcal{U}(\mathfrak{g})\Phi_2$ has a basis $L\Phi_2,\Phi_2,R\Phi_2,R^2\Phi_2,\ldots$. Hence
\begin{equation}\label{V12ppEinjectivepropeq1}
 \H_\infty\Phi_2=\C\,\oplus\,\bigoplus_{\ell=0}^\infty\C(R^\ell\Phi_2)\,\oplus\,\bigoplus_{\ell=0}^\infty\C(\varepsilon_-R^\ell\Phi_2).
\end{equation}
It follows that
\begin{equation}\label{V12ppEinjectivepropeq2}
 \H\Phi_2=\C\,\oplus\,\bigoplus_{\ell=0}^\infty R^\ell\H_{\rm fin}\Phi_2\,\oplus\,\bigoplus_{\ell=0}^\infty\varepsilon_-R^\ell\H_{\rm fin}\Phi_2.
\end{equation}
For $k\geq2$, set $\ell=(k-2)/2$. Then, using Lemma~\ref{intpropprop},
\begin{equation}\label{V12ppEinjectivepropeq3}
 \mathbf{E}(U_k)=\mathbf{E}(R^\ell U_2)=\mathbf{E}(R^\ell\H_{\rm fin}f_2)=R^\ell\H_{\rm fin}\Phi_2.
\end{equation}
Similarly, for $k\leq-2$, we see $\mathbf{E}(U_k)=\varepsilon_-R^\ell\H_{\rm fin}\Phi_2$, where $\ell=(-k-2)/2$. In view of \eqref{V12ppEinjectivepropeq2}, this proves our assertions.

ii) By weight considerations, it is enough to prove that the restriction of $\mathbf{E}$ to $U_k$ is injective. By Lemma~\ref{intpropprop} i), this restriction is an $\H_{\rm fin}$-map. Clearly, $U_k\cong\bigotimes_{p<\infty}V_{1/2,p}$ as $\H_{\rm fin}$-modules. Hence the injectivity follows from Lemma~\ref{HfinPhilemma}.
\end{proof}

\begin{theorem}\label{E2reptheorem}
 Let $\Phi_2=E(\cdot,f_2)$ be the automorphic form corresponding to $E_2$. Then the global representation $\H\Phi_2$ generated by $\Phi_2$ admits a filtration
 \begin{equation}\label{E2reptheoremeq0}
  0\subset\C\subset\H\Phi_2,
 \end{equation}
 where $\C$ is the space of constant automorphic forms, and
 \begin{equation}\label{E2reptheoremeq1}
  \H\Phi_2/\C\cong\mathcal{D}_1^{\rm hol}\,\otimes\,\bigotimes_{p<\infty}V_{1/2,p}
 \end{equation}
 as $\H$-modules.
\end{theorem}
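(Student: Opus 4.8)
The theorem asks us to identify the structure of $\H\Phi_2$. The filtration $0\subset\C\subset\H\Phi_2$ is already essentially at hand: Proposition~\ref{diffopHproposition} (via the operator $L_k$) shows that $L\Phi_2$ is a \emph{constant} automorphic form, and a quick check using \eqref{Ekeq5}---the nonvanishing of the intertwining contribution $-3/\pi$---confirms that this constant is nonzero. Thus $\C\subset\H\Phi_2$, establishing the lower step of \eqref{E2reptheoremeq0}. The substance of the theorem is therefore the isomorphism \eqref{E2reptheoremeq1}, and my plan is to deduce it directly from Proposition~\ref{V12ppEinjectiveprop}, which has done almost all of the work.

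\emph{First}, I would invoke Proposition~\ref{V12ppEinjectiveprop}~ii) to conclude that the map $\mathbf{E}\colon V_{1/2}''\to\H\Phi_2$ is injective, and hence an isomorphism of vector spaces onto its image. \emph{Second}, by Proposition~\ref{V12ppEinjectiveprop}~i), that image is exactly $1$-codimensional in $\H\Phi_2$, with the constant function $1$ spanning a complement; combined with the first step this yields a vector-space isomorphism
\begin{equation}
 \H\Phi_2/\C\;\cong\;V_{1/2}''\;=\;\mathcal{D}_1^{\rm hol}\otimes\bigotimes_{p<\infty}V_{1/2,p},
\end{equation}
where the final identification of $V_{1/2}''$ is its defining description \eqref{V12ppropeq6}. \emph{Third}, and this is the only genuinely delicate point, I must upgrade this vector-space isomorphism to an isomorphism of $\H$-modules. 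The obstacle is precisely the failure of $\mathbf{E}$ to be fully $\H$-intertwining at the archimedean place (Lemma~\ref{intpropprop}): the map does not commute with $L$ when applied to weight-$2$ vectors, which is exactly why the constant subspace $\C$ appears. The resolution is that \emph{after} passing to the quotient by $\C$, these anomalous terms disappear. I would verify this place by place: by Lemma~\ref{intpropprop}~i) the map is genuinely $\H_{\rm fin}$-intertwining, and for the archimedean generators $H,R,\varepsilon_-$ the intertwining identities \eqref{intproppropeq2}, \eqref{intproppropeq3}, \eqref{intproppropeq5} hold on the nose on $V_{1/2}''$; the only exceptional cases in \eqref{intproppropeq3}--\eqref{intproppropeq4} are $L$ on $U_2$ and $R$ on $U_{-2}$, and in each the discrepancy lands in $\C$. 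Since $\C$ is killed in the quotient, the induced map on $\H\Phi_2/\C$ respects the full action of $\H_\infty$ as well.

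\emph{I expect the third step to be the main obstacle}, not computationally but conceptually: one must check that the construction of $V_{1/2}''$---obtained by deleting all weight-$0$ functions from $V_{1/2}'$, see \eqref{V12ppropeq6}---is exactly what makes the quotient map $\H$-linear. Concretely, the weights occurring in $\mathcal{D}_1^{\rm hol}$ are $\pm2,\pm4,\ldots$, and the two exceptional relations in Lemma~\ref{intpropprop} correspond precisely to the transitions $U_2\to U_0$ (via $L$) and $U_{-2}\to U_0$ (via $R$), i.e.\ the two ways of landing in the missing weight-$0$ space, whose image is the constant subspace $\C$. Once this matching is spelled out, the induced map $\H\Phi_2/\C\to\mathcal{D}_1^{\rm hol}\otimes\bigotimes_{p<\infty}V_{1/2,p}$ (or rather its inverse) is $\H$-linear and bijective, completing the proof.
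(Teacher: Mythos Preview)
Your proposal is correct and follows essentially the same approach as the paper's proof: establish $\C\subset\H\Phi_2$ via $L\Phi_2$ being a nonzero constant, use Proposition~\ref{V12ppEinjectiveprop} for the vector-space isomorphism $V_{1/2}''\to\H\Phi_2/\C$, and upgrade to an $\H$-module isomorphism via Lemma~\ref{intpropprop} plus the observation that the exceptional discrepancies land in $\C$. The one point you leave slightly implicit, and which the paper spells out, is why $L(\mathbf{E}(f))\in\C$ for \emph{all} $f\in U_2$ (not just $f=f_2$): this follows because $U_2=\H_{\rm fin}f_2$, the map $\mathbf{E}$ is $\H_{\rm fin}$-intertwining, $L$ commutes with $\H_{\rm fin}$, and $\C$ is $\H_{\rm fin}$-stable.
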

\begin{proof}
The map $\mathbf{E}$ from Proposition~\ref{V12ppEinjectiveprop} induces a vector space isomorphism $V_{1/2}''\to\H\Phi_2/\C$. Since $L(\mathbf{E}(f_2))=L\Phi_2$ is a constant automorphic form and $U_2=\H_{\rm fin}f_2$, it follows that $L(\mathbf{E}(f))\in\C$ for $f\in U_2$. Applying $\varepsilon_-$, it follows that $R(\mathbf{E}(f))\in\C$ for $f\in U_{-2}$. Combined with the intertwining properties of Lemma~\ref{intpropprop}, it follows that we constructed an isomorphism of $\H$-modules $V_{1/2}''\to\H\Phi_2/\C$. Now all we need to do is observe \eqref{V12ppropeq6}.
\end{proof}

Next we consider the global representations generated by the Eisenstein series of weight $2$ with level. For a positive, square-free integer $N$, we let
\begin{align}
 \label{fkNdefeq}f_{k,N}&=f_{1/2,\infty}^{(k)}\otimes\Big(\otimes_{p\mid N}f_{1/2,p}^{\rm St}\Big)\otimes\Big(\otimes_{p\nmid N}f_{1/2,p}^{\rm sph}\Big),\\
 \label{UkNdefeq}U_{k,N}&=\H_{\rm fin}f_{k,N}=\C f_{1/2,\infty}^{(k)}\otimes\Big(\bigotimes_{p\mid N}\mathcal{D}_p\Big)\otimes\Big(\bigotimes_{p\nmid N}V_{1/2,p}\Big),\\
 \label{V12ppNdefeq}V_{1/2,N}''&=\bigoplus_{\substack{k\in2\Z\\k\neq0}}U_{k,N}=\mathcal{D}_1^{\rm hol}\otimes\Big(\bigotimes_{p\mid N}\mathcal{D}_p\Big)\otimes\Big(\bigotimes_{p\nmid N}V_{1/2,p}\Big).
\end{align}
Observe that $\H f_{2,N}=V''_{1/2,N}$.

\begin{theorem}\label{E2Nreptheorem}
 For a square-free positive integer $N>1$ let $\Phi_{2,N}=E(\cdot,f_{2,N})$ be the automorphic form corresponding to $E_{2,N}$ (see Theorem~\ref{tildeENtheorem}). Then
 \begin{equation}\label{E2Nreptheoremeq1}
  \H\Phi_{2,N}\cong\mathcal{D}_1^{\rm hol}\otimes\Big(\bigotimes_{p\mid N}\mathcal{D}_p\Big)\otimes\Big(\bigotimes_{p\nmid N}V_{1/2,p}\Big).
 \end{equation}
\end{theorem}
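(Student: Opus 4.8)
The plan is to deduce the result from Theorem~\ref{E2reptheorem} by realizing $\H\Phi_{2,N}$ as a submodule of $\H\Phi_2$ that meets the constant functions trivially, rather than re-running the injectivity machinery from scratch. Recall from \eqref{Eisleveleq12} that $f_{2,N}=B_Nf_2$ with $B_N\in\H_{\rm fin}$, and that $f_2\in U_2\subset V_{1/2}''$. Since the map $f\mapsto E(\cdot,f)$ is $\H_{\rm fin}$-intertwining on $V_{1/2}''$ by Lemma~\ref{intpropprop}~i), we have $\Phi_{2,N}=E(\cdot,B_Nf_2)=B_N\Phi_2$, and therefore $\H\Phi_{2,N}=\H B_N\Phi_2\subseteq\H\Phi_2$. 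The work then reduces to locating this submodule inside the structure $\H\Phi_2/\C\cong\mathcal{D}_1^{\rm hol}\otimes\bigotimes_{p<\infty}V_{1/2,p}$ supplied by Theorem~\ref{E2reptheorem}.

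The decisive structural difference from the level-one situation is that $E_{2,N}$ is genuinely holomorphic (Theorem~\ref{tildeENtheorem}), in contrast to the non-holomorphic $E_2$ of \eqref{EKeq1b}. By Proposition~\ref{diffopHproposition} the lowering operator $L$ corresponds to $L_2=-2iy^2\partial/\partial\bar\tau$, which annihilates holomorphic functions, so $L\Phi_{2,N}=0$, whereas $L\Phi_2$ was a non-zero constant. Since $\Phi_{2,N}$ has weight $2$ and is killed by $L$, a PBW argument (as in \eqref{V12ppEinjectivepropeq1}, now without the $\C$-summand) shows that $\H_\infty\Phi_{2,N}$ is spanned by the $R^\ell\Phi_{2,N}$ and $\varepsilon_-R^\ell\Phi_{2,N}$ for $\ell\geq0$, of weights $\pm(2\ell+2)\neq0$; as $\H_{\rm fin}$ preserves archimedean weights, every weight occurring in $\H\Phi_{2,N}$ is a non-zero even integer. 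Hence $\H\Phi_{2,N}\cap\C=0$, so the reduction map $q\colon\H\Phi_2\to\H\Phi_2/\C$ is injective on $\H\Phi_{2,N}$. Writing $q\circ\mathbf{E}$ for the $\H$-isomorphism $V_{1/2}''\xrightarrow{\sim}\H\Phi_2/\C$ of Theorem~\ref{E2reptheorem}, I then compute
\begin{equation*}
 q(\H\Phi_{2,N})=\H\,(q\circ\mathbf{E})(f_{2,N})=(q\circ\mathbf{E})(\H f_{2,N})=(q\circ\mathbf{E})(V_{1/2,N}''),
\end{equation*}
where the first equality uses $q(B_N\Phi_2)=(q\circ\mathbf{E})(f_{2,N})$ (Lemma~\ref{intpropprop}~i) and \eqref{Eisleveleq12}), the second the $\H$-equivariance of $q\circ\mathbf{E}$, and the last the identity $\H f_{2,N}=V_{1/2,N}''$ noted before the theorem. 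Injectivity of $q$ on $\H\Phi_{2,N}$ and of $q\circ\mathbf{E}$ then give $\H\Phi_{2,N}\cong V_{1/2,N}''$, which is the asserted tensor product by \eqref{V12ppNdefeq}.

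The step I expect to require the most care is the verification that $\H\Phi_{2,N}\cap\C=0$: this is exactly where holomorphy of $E_{2,N}$ enters, and it is the single feature distinguishing the present case from Theorem~\ref{E2reptheorem}, where the failure of the intertwining property at the archimedean place produced a non-zero constant. A more self-contained alternative would mirror the level-one proof directly: one shows that $f\mapsto E(\cdot,f)$ restricted to $V_{1/2,N}''$ is a genuine $\H$-homomorphism (the obstructions at weights $\pm2$ in Lemma~\ref{intpropprop} now vanish precisely because $L\Phi_{2,N}=0$, and by $\varepsilon_-$-symmetry $R(\varepsilon_-\Phi_{2,N})=\varepsilon_- L\Phi_{2,N}=0$), that its image is $\H\Phi_{2,N}$, and that it is injective by an adaptation of Lemma~\ref{HfinPhilemma} with $\Phi_2$ replaced by $\Phi_{2,N}$. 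The embedding argument above avoids reproving injectivity and is therefore shorter.
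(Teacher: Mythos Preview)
Your argument is correct. Both proofs hinge on the single observation $L\Phi_{2,N}=0$, but they organize the deduction differently.

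The paper derives $L\Phi_{2,N}=0$ purely algebraically: since $L\Phi_2$ is a constant automorphic form and $\gamma_p$ annihilates constants, $B_N(L\Phi_2)=0$, whence $L(\mathbf{E}(f_{2,N}))=L(\mathbf{E}(B_Nf_2))=B_NL\Phi_2=0$. From this (and the $\varepsilon_-$-symmetric statement $R(\mathbf{E}(U_{-2,N}))=0$) it concludes directly via Lemma~\ref{intpropprop} that $\mathbf{E}|_{V_{1/2,N}''}$ is a genuine $\H$-homomorphism, hence an isomorphism onto $\H\Phi_{2,N}$ by the injectivity already contained in Proposition~\ref{V12ppEinjectiveprop}. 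This is exactly the ``self-contained alternative'' you sketch at the end.

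You instead obtain $L\Phi_{2,N}=0$ analytically from the holomorphy of $E_{2,N}$ in Theorem~\ref{tildeENtheorem}, then use it only to show $\H\Phi_{2,N}\cap\C=0$ via weights, and pass through the $\H$-isomorphism $q\circ\mathbf{E}:V_{1/2}''\to\H\Phi_2/\C$ from Theorem~\ref{E2reptheorem} as a black box. This is a clean packaging: once Theorem~\ref{E2reptheorem} is in hand, you need only verify that $\H\Phi_{2,N}$ avoids the constants, and the rest is formal. The paper's route, by contrast, never invokes holomorphy of $E_{2,N}$ and never needs to think about the quotient by $\C$; it stays inside the induced model and upgrades the partial intertwining of Lemma~\ref{intpropprop} to a full one on the smaller subspace. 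Either way, the substantive input is the same: the archimedean obstruction responsible for the constant term in $\H\Phi_2$ disappears once a Steinberg component is inserted at some finite place.
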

\begin{proof}
We consider the restriction of the injective map $\mathbf{E}:V_{1/2}''\to\H\Phi_2$ to $V_{1/2,N}''$. Let $\gamma_p\in\H_p$ be the operator defined in \eqref{Eisleveleq7} and $B_N\in\H_{\rm fin}$ be as in \eqref{Eisleveleq11}. Since $L\Phi_2$ is a constant automorphic form, $\gamma_p(L\Phi_2)=0$. Hence $B_N(L\Phi_2)=0$, and it follows that
\begin{align}\label{E2Nreptheoremeq2}
 L(\mathbf{E}(U_{2,N}))&=L(\mathbf{E}(\H_{\rm fin}f_{2,N}))\nonumber\\
 &=\H_{\rm fin}L(\mathbf{E}(f_{2,N}))\nonumber\\
 &=\H_{\rm fin}L(\mathbf{E}(B_Nf_2))\nonumber\\
 &=\H_{\rm fin}B_NL(\mathbf{E}(f_2))\nonumber\\
 &=\H_{\rm fin}B_NL\Phi_2\nonumber\\
 &=0.
\end{align}
By applying $\varepsilon_-$ it follows that $R(\mathbf{E}(U_{-2,N}))=0$. Lemma~\ref{intpropprop} now implies that the map $\mathbf{E}:V_{1/2,N}''\to\H\Phi_2$ is $\H$-intertwining. Hence $V_{1/2,N}''=\H f_{2,N}\cong\H\mathbf{E}(f_{2,N})=\H\Phi_{2,N}$ as $\H$-modules.
\end{proof}
\section{Eisenstein series with character}\label{Sect:ES with character}
In the previous sections we have explained the adelic origin of the Eisenstein series $E_k$, including the case $k=2$. The literature also contains Eisenstein series $E_{k,\xi}$, whose definition involves a Dirichlet character $\xi$. In this section we apply the necessary modifications to our previous theory in order to account for the presence of a non-trivial $\xi$. The arguments will be slightly more complicated in some places, but easier in others, due to the fact that the global intertwining operator vanishes.

Instead of the $V_s$, the relevant induced representations will now be the $V_{s,\chi}:=\chi|\cdot|^s\times\chi^{-1}|\cdot|^{-s}$, where $\chi$ is a character of $F^\times$ in the context of a $p$-adic field $F$, or a character of $\R^\times$ in the archimedean case, or a character of $\Q^\times\backslash\A^\times$ in the global context. In the $p$-adic case, this representation is reducible if and only if $\chi^2=|\cdot|^{-2s\pm1}$, i.e., if $\chi=\mu|\cdot|^{-s\pm1/2}$ with a quadratic character $\mu$. In this case $V_{s,\chi}=\mu|\cdot|^{\pm1/2}\times\mu|\cdot|^{\mp1/2}=\mu V_{\pm1/2}$, and we have the exact sequences \eqref{padicexacteq1} and \eqref{padicexacteq2}.

\subsection{Dirichlet characters}
In the following we fix a primitive Dirichlet character $\xi$ of conductor $u>1$. Then there exists a unique character $\chi=\otimes_{v\leq\infty}\chi_v$ of $\Q^\times\backslash\A^\times$ with the property
\begin{equation}\label{Diri1}
 \prod_{p\mid u}\chi_p(a)=\xi(a)^{-1}\qquad\text{for }a\in\Z\text{ with }(a,u)=1.
\end{equation}
For a prime $p\nmid u$ the local component $\chi_p$ is unramified with Satake parameter $\chi_p(p)=\xi(p)$. For a prime $p\mid u$ the local component $\chi_p$ is ramified with conductor exponent $a(\chi_p)=v_p(u)$. The archimedean component is given by
\begin{equation}\label{Diri3}
 \chi_{\infty}=\begin{cases}
		{\rm triv} & \text{if } \xi(-1)=1,\\	
		\sgn & \text{if } \xi(-1)=-1.
	\end{cases}
\end{equation}
Since $\chi(a)=1$, it follows from \eqref{Diri1} that
\begin{equation}\label{Diri2}
 \prod_{p\mid a}\chi_p(a)=\xi(|a|)\qquad\text{for }a\in\Z\text{ with }(a,u)=1.
\end{equation}
The classical Gauss sum of $\xi$ is defined by
\begin{equation}\label{Gausssumeq}
 G(\xi)=\sum_{a=1}^u\xi(a)e^{\frac{2\pi ia}{u}}.
\end{equation}
As in Sect.~\ref{globalWsec}, we fix the global additive character $\psi=\prod\psi_v$ as in Tate's thesis. Attached to $\chi$ is a global $\varepsilon$-factor
\begin{equation}\label{globalepsilondefeq}
 \varepsilon(s,\chi)=\prod_{v\leq\infty}\varepsilon(s,\chi_v,\psi_v),
\end{equation}
defined as a product of local factors. The archimedean $\varepsilon$-factor is independent of the complex variable $s$, and is given by
\begin{equation}\label{archepsiloneq}
 \varepsilon(s,\chi_\infty,\psi_\infty)=
 \begin{cases}
  1&\text{if }\chi_\infty={\rm triv},\\
  -i&\text{if }\chi_\infty=\sgn.
 \end{cases}
\end{equation}
\begin{lemma}\label{epsgausslemma}
 With the above notations and conventions,
 \begin{equation}\label{epsgausslemmaeq}
  \prod_{p<\infty}\varepsilon(0, \chi_p,\psi_p)=G(\xi).
 \end{equation}
\end{lemma}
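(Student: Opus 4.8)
The plan is to reduce the product to the places where $\chi_v$ ramifies, to evaluate each surviving local factor as a classical Gauss sum, and then to reassemble everything using the Chinese Remainder factorization of $G(\xi)$. First I would dispose of the unramified places: for $p\nmid u$ the character $\chi_p$ is unramified and $\psi_p$ has conductor $\Z_p$, so $\varepsilon(0,\chi_p,\psi_p)=1$, and the product collapses to $\prod_{p\mid u}\varepsilon(0,\chi_p,\psi_p)$. Write $c_p:=v_p(u)=a(\chi_p)$, and let $\xi=\prod_{p\mid u}\xi_p$ be the factorization of $\xi$ into primitive Dirichlet characters $\xi_p$ modulo $p^{c_p}$. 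Separating the $p$-components via the Chinese Remainder Theorem, \eqref{Diri1} yields the local--global relation $\chi_p(a)=\xi_p(a)^{-1}$ for $a\in\Z_p^\times$, so that $\chi_p^{-1}=\xi_p$ on units.

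Next I would evaluate the local factor explicitly. With the Tate normalization (self-dual measure, $\psi_p$ of level $0$) and $\chi_p$ ramified of conductor $p^{c_p}$, the $\varepsilon$-factor at $s=0$ is the local Gauss integral $\int_{v(x)=-c_p}\chi_p^{-1}(x)\psi_p(x)\,dx$. Each coset of $1+p^{c_p}\Z_p$ in $p^{-c_p}\Z_p^\times$ carries additive measure $1$, and both $\chi_p$ and $\psi_p$ are constant on it, so writing $x=a\,p^{-c_p}$ with $a\in(\Z/p^{c_p}\Z)^\times$, and using $\chi_p^{-1}(x)=\chi_p(p)^{c_p}\xi_p(a)$ together with $\psi_p(a\,p^{-c_p})=e^{2\pi i a/p^{c_p}}$ (which follows from $\psi$ being trivial on $\Q$ and $\psi_\infty(x)=e^{-2\pi ix}$), I obtain
\begin{equation*}
 \varepsilon(0,\chi_p,\psi_p)=\chi_p(p)^{c_p}\sum_{a\in(\Z/p^{c_p}\Z)^\times}\xi_p(a)\,e^{2\pi i a/p^{c_p}}=\chi_p(p)^{c_p}\,G(\xi_p).
\end{equation*}

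It then remains to handle the uniformizer factors $\chi_p(p)^{c_p}$. Since $\chi$ is a character of $\Q^\times\backslash\A^\times$, evaluating it at the principal idele $p$ gives $\prod_v\chi_v(p)=1$; here $\chi_\infty(p)=1$ (as $p>0$), the factors at $\ell\nmid u$ are $1$ (unramified $\chi_\ell$ applied to the unit $p$), and at $\ell\mid u$, $\ell\neq p$ one has $\chi_\ell(p)=\xi_\ell(p)^{-1}$. Hence $\chi_p(p)=\prod_{\ell\mid u,\,\ell\neq p}\xi_\ell(p)$, and collecting over $p$ while interchanging the two products,
\begin{equation*}
 \prod_{p\mid u}\chi_p(p)^{c_p}=\prod_{\ell\mid u}\xi_\ell\big(u/\ell^{c_\ell}\big).
\end{equation*}
Finally I would invoke the Chinese Remainder factorization of classical Gauss sums, $G(\xi)=\prod_{\ell\mid u}\xi_\ell(u/\ell^{c_\ell})\prod_{p\mid u}G(\xi_p)$, whose cross-terms are exactly the product just computed; multiplying the local evaluations then gives $\prod_{p<\infty}\varepsilon(0,\chi_p,\psi_p)=G(\xi)$. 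As a consistency check, $|\varepsilon(0,\chi_p,\psi_p)|=|G(\xi_p)|=p^{c_p/2}$, so the product has absolute value $\sqrt{u}=|G(\xi)|$, matching Tate's normalization $|\varepsilon(0,\chi_p,\psi_p)|=q^{a(\chi_p)/2}$.

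The hard part will be pinning down the normalization in the local evaluation: one must check that, in the conventions in force for $\psi_p$ and for the Tate $\varepsilon$-factor, the value at $s=0$ carries no stray power of $p$, and that it is $\chi_p^{-1}$ (not $\chi_p$) that appears in the integrand, so that the chosen sign in \eqref{Diri1} produces $\xi_p(a)$ and hence $G(\xi_p)$ rather than $G(\overline{\xi_p})$. Everything downstream is the elementary Gauss-sum bookkeeping, which the placement of the inverse in \eqref{Diri1} has been arranged to make come out cleanly.
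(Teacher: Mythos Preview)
Your proposal is correct and carries out in full the exercise that the paper leaves to the reader: the paper's proof consists only of the hint ``use the standard formula $\varepsilon(0,\chi_p,\psi_p)=\int_{p^{-a(\chi_p)}\Z_p^\times}\chi_p^{-1}(x)\psi_p(x)\,dx$'', and your argument starts from precisely this formula and supplies the remaining bookkeeping (localizing \eqref{Diri1} to obtain $\chi_p^{-1}=\xi_p$ on units, evaluating the integral as $\chi_p(p)^{c_p}G(\xi_p)$, identifying the uniformizer factors via automorphy of $\chi$, and invoking the Chinese Remainder factorization of $G(\xi)$).
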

\begin{proof}
This is an exercise, using the standard formula
\begin{equation}\label{epsgausslemmaeq2}
 \varepsilon(0, \chi_p,\psi_p)=\int\limits_{p^{-a(\chi_p)}\Z_p^\times}\chi_p^{-1}(x)\psi_p(x)\,dx
\end{equation}
as a starting point.
\end{proof}

Since $\xi$ is primitive, we have an equality of $L$-functions $L(s,\xi)=L(s,\chi)$, where $L(s,\xi)=\sum_{n=1}^\infty \xi(n)n^{-s}$ is the classical Dirichlet $L$-series. In the following the Dirichlet character $\xi^2$ will be relevant, by which we mean the function $\xi^2(n)=\xi(n)^2$ for $n\in\Z$ (as opposed to the primitive Dirichlet character associated to this function). We have $L(s,\xi^2)=\prod_{p\nmid u}L(s,\chi_p^2)$, but not in general $L(s,\xi^2)=L(s,\chi^2)$.
\subsection{Intertwining operators}\label{indrepsec-char}
\subsubsection*{Non-archimedean case}
Let $F$, $\varpi$, $q$, $|\cdot|$, $v$, $G$ be as in Sect.~\ref{indrepsec}. Let $\chi$ be a character of $F^\times$. Recall that $V_{s,\chi}$ is the standard space of the parabolically induced representation $\chi|\cdot|^s\times \chi^{-1}|\cdot |^{-s}$, consisting of locally constant functions $f:G\to\C$ with the transformation property
\begin{equation}\label{localpadiceq-1}
	f\left(\left[\begin{smallmatrix}a&b\\&d \end{smallmatrix}\right]g\right)=\Big|\frac{a}{d}\Big|^{s+1/2} \chi\left(\frac{a}{d}\right)f(g).
\end{equation}
Set
\begin{equation}\label{betadefeq}
 \beta:=\begin{cases}
         \chi^2(\varpi)&\text{if $\chi^2$ is unramified},\\
         0&\text{if $\chi^2$ is ramified}.
        \end{cases}
\end{equation}
Generalizing \eqref{localpadiceq1}, we define for $f_s\in V_{s,\chi}$
\begin{equation}\label{localpadiceq88}
	(A_{s,\chi}f_s)(g):=\lim_{N\to\infty}\bigg(\int\limits_{\substack{F\\v(b)>-N}}f_s(\begin{bsmallmatrix}&-1\\1\end{bsmallmatrix}\begin{bsmallmatrix}1&b\\&1\end{bsmallmatrix}g)\,db+(1-q^{-1})\frac{\beta^N q^{-2Ns}}{1-\beta q^{-2s}} f_s(g)\bigg),
\end{equation}
assuming $q^{2s}\neq\beta$. By a similar argument as in Sect.~\ref{interpadic}, we see that $A_{s,\chi}$ is an intertwining operator $V_{s,\chi}\to V_{-s,\chi^{-1}}$, for any $s\in\C$ with $q^{2s}\neq\beta$.

Assume that $\chi$ itself is unramified. Then we have a normalized spherical vector $f_{s,\chi}^{\rm sph}$. We also define the \emph{Steinberg vector} $f_{s,\chi}^{\rm St}$ by
\begin{equation}\label{localpadiceq4 char}
 f_{s,\chi}^{\rm St}=\frac1{1-\beta^{-1} q^{2s+1}}\Big((1+\beta^{-1} q^{2s})f_{s,\chi}^{\rm sph}-\chi(\varpi)^{-1}q^{s-1/2}(q+1)\mat{1}{}{}{\varpi}f_{s,\chi}^{\rm sph}\Big),
\end{equation}
which satisfies $f_{s,\chi}^{\rm St}(1)=1$ and $f_{s,\chi}^{\rm St}(\mat{}{-1}{1}{})=-q^{-1}$. The two vectors $f_{s,\chi}^{\rm sph}$ and $f_{s,\chi}^{\rm St}$ form a basis of $V_{s,\chi}^{\Gamma_0(\p)}$. 
Calculations show that
\begin{align}
 \label{localpadiceq5char}A_{s,\chi}f^{\rm sph}_{s,\chi}&=\frac{1-\beta q^{-2s-1}}{1-\beta q^{-2s}}\,f^{\rm sph}_{-s,\chi^{-1}},\\
 \label{localpadiceq6char}A_{s,\chi}f_{s,\chi}^{\rm St}&=-q^{-1}\frac{1-\beta q^{-2s+1}}{1-\beta q^{-2s}}f_{-s,\chi^{-1}}^{\rm St}.
\end{align}
In particular, if $\chi=\mu|\cdot|^{-s+1/2}$ with an unramified quadratic character $\mu$, then $V_{s,\chi}=V_{1/2,\mu}$, and
\begin{equation}\label{localpadiceq7 char}
 f_{1/2,\mu}^{\rm St}=\frac1{1-q}\Big(f_{1/2,\mu}^{\rm sph}-\mu(\varpi)\mat{1}{}{}{\varpi}f_{1/2,\mu}^{\rm sph}\Big)
\end{equation}
lies in the kernel of $A_{1/2,\mu}$.
\subsubsection*{Archimedean case}
Let $\chi$ be either the trivial character or the sign character of $\R^\times$. Then $V_{s,\chi}$ is the $\H_\infty$-module spanned by the functions $f_{s, \chi}^{(k)}$ for $k\in2\Z$, where
\begin{equation}\label{localrealeq2-char0}
 f_{s, \chi}^{(k)}(\mat{a}{b}{}{d}r(\theta))=\chi(ad^{-1})\Big|\frac ad\Big|^{s+1/2}e^{ik\theta},
\end{equation}
for $a,d\in\R^\times$, $b\in\R$, and $r(\theta)$ as in \eqref{rthetadefeq}. Lemma~\ref{RHLVslemma} still holds with $f_{s, \chi}^{(k)}$ instead of $f_s^{(k)}$, except that \eqref{RHLVslemmaeq4} has to be replaced by $\varepsilon_-f_{s, \chi}^{(k)}=\chi(-1)f_{s, \chi}^{(-k)}$. We still have the sequences \eqref{realexacteq1} and \eqref{realexacteq2}, with $V_{s,\chi}$ instead of $V_s$ and $\chi\mathcal{F}_{k-1}$ instead of $\mathcal{F}_{k-1}$; there is no need to twist $\mathcal{D}_{k-1}^{\rm hol}$, since it is invariant under twisting by the sign character.

For the archimedean intertwining operator $A_{s,\chi}:V_{s,\chi}\to V_{-s,\chi}$, defined just as in \eqref{localrealeq3} and convergent for ${\rm Re}(s)>0$, we still have the statement of Lemma~\ref{localreallemma1}.
\subsubsection*{Global case}
Now let $\chi= \otimes_{v\le \infty} \chi_v$ be the character of $\Q^\times\backslash\A^{\times}$ corresponding to our primitive Dirichlet character $\xi$, and consider the global $\H$-module
\begin{equation}\label{globalVschieq}
 V_{s,\chi}=\bigotimes_v V_{s,\chi_v}
\end{equation}
with the local $\H_v$-modules $V_{s,\chi_v}$ defined above. We would like to define a global intertwining operator $A_{s,\chi}:V_{s,\chi}\to V_{-s,\chi^{-1}}$ by an integral as in \eqref{globaleq2}. To investigate convergence, let $f=\otimes f_v$ be a pure tensor in $V_{s,\chi}$. Let $T$ be a finite set of finite places such that $\chi_p$ is unramified and $f_p=f_{s,\chi_p}^{\rm sph}$ for $p\notin T$; such a set $T$ exists by definition of the restricted tensor product. Similar to \eqref{globaleq4}, we get
\begin{equation}\label{globaleq-4}
	(A_{s,\chi}f)(g)=\bigg(\prod_{v\in T\cup\{\infty\}}\:(A_{s,\chi_v}f_v)(g_v)\bigg)\bigg(\prod_{p\notin T}\frac{1-\chi_p^2(p) p^{-2s-1}}{1-\chi_p^2(p)  p^{-2s}}\,f^{\rm sph}_{-s,\chi_p^{-1}}(g_p)\bigg).
\end{equation}
Again we see that the product converges for ${\rm Re}(s)>1/2$. We may rewrite \eqref{globaleq-4} as
\begin{equation}\label{globaleq-5}
	(A_{s,\chi}f)(g)=\frac{L(2s,\chi^2)}{L(2s+1,\chi^2)}((A_{s,\infty}f_\infty)(g_\infty))\bigg(\prod_{p\in T}\:\frac{1-\chi_p^2(p)p^{-2s}}{1-\chi_p^2(p)p^{-2s-1}}(A_{s,\chi_p}f_p)(g_p)\bigg)\!\bigg(\prod_{p\notin T}f^{\rm sph}_{-s,\chi_p^{-1}}(g_p)\bigg).
\end{equation}
Now assume that $f=f_s=\otimes f_{s,v}$ varies in a flat section. Assume further that $f_{s,\infty}=f_{s,\chi_\infty}^{(k)}$ for some even $k\geq2$. Then, by \eqref{localrealeq6}, the limit $\lim_{s\to1/2}(A_{s,\infty}f_\infty)(g_\infty)$ is zero. It follows that the right hand side of \eqref{globaleq-5} is analytic in the region ${\rm Re}(s)>0$. In fact:
\begin{itemize}
 \item If $\chi^2\neq1$, then $L(2s,\chi^2)$ is holomorphic in ${\rm Re}(s)>0$, so that $(A_{s,\chi}f)(g)$ vanishes at $s=1/2$.
 \item If $\chi^2=1$, then $L(2s,\chi^2)$ has a simple pole at $s=1/2$. However, in this case we will let $f_{s,p}$ be the flat section containing an element of the subrepresentation $\chi_p\St_{\GL(2,\Q_p)}$ of $V_{1/2,\chi_p}$. Note that by our assumptions on $\xi$ the set $T$ is non-empty, and that $f_{1/2,\chi_p}$ is in the kernel of $A_{1/2,\chi_p}$. Hence we still obtain that $(A_{s,\chi}f)(g)$ vanishes at $s=1/2$.
\end{itemize}
\subsection{Whittaker integrals}
\subsubsection*{Non-archimedean case}
We use the same $p$-adic setup as in the previous section. For $f\in V_{s,\chi}$ and $\alpha\in F^\times$ we define $W_{s,\chi}^\alpha$ by the same formula as in \eqref{localpadicWeq2}. Instead of \eqref{localpadicWeq10}, we have
\begin{equation}\label{localpadicWeq10b}
 (W_{s,\chi}^\alpha f)(\mat{y}{}{}{1}g)=\chi(y)^{-1}|y|^{1/2-s}\,(W_{s,\chi}^{\alpha y}f)(g)
\end{equation}
for all $g\in G$ and $y,\alpha\in F^\times$.

Assume that $\chi$ is ramified with conductor exponent $a(\chi)$. Then $V_{s,\chi}^{\Gamma_0(\p^n)}=0$ for $0\leq n<2a(\chi)$, and $\dim V_{s,\chi}^{\Gamma_0(\p^n)}=1$ for $n=2a(\chi)$. A non-zero $\Gamma_0(\p^{2a(\chi)})$-invariant vector is given in \cite[Prop.~2.1.2]{Schmidt2002}. It is supported on the double coset $B\mat{1}{}{\varpi^{a(\chi)}}{1}\Gamma_0(\p^{2a(\chi)})$, where $B$ is the upper triangular subgroup of $\GL(2,F)$. We call such a vector $f_{s,\chi}^{\rm new}$, and normalize it so that
\begin{equation}\label{fnewdefeq}
 f_{s,\chi}^{\rm new}(\mat{1}{}{\varpi^{a(\chi)}}{1})=\chi(\varpi)^{-a(\chi)}.
\end{equation}
Note that the definition is independent of the choice of uniformizer. If $\chi^2\neq|\cdot|^{-2s\pm1}$, then $f_{s,\chi}^{\rm new}$ is the newform in the irreducible principal series representation $V_{s,\chi}$. If $\chi=\mu|\cdot|^{-s+1/2}$ with a quadratic character $\mu$, then $f_{s,\chi}^{\rm new}$ is the newform in the subrepresentation $\mu\St_{\GL(2)}$ of $V_{s,\chi}$.

\begin{lemma}\label{calphaplemma1}
 The following holds for any $\alpha, y\in F^{\times}$.
 \begin{enumerate}
  \item Suppose that $\chi$ is unramified. Let $\beta=\chi^2(\varpi)$ and assume that $q^{2s}\neq\beta$. If $f_{s,\chi}^{\rm sph}\in V_{s,\chi}$ is the normalized spherical vector, then
   \begin{align}\label{calphaplemma-eq3}
    &(W^\alpha_{s,\chi} f_{s,\chi}^{\rm sph})(\mat{y}{}{}{1})\nonumber\\
     &=\begin{cases}
      \displaystyle\frac{(\chi^2(\alpha)|\alpha|^{2s}\chi(y)|y|^{1/2+s}-\beta^{-1} q^{2s}\chi(y)^{-1}|y|^{1/2-s})(1-\beta q^{-2s-1})}{1-\beta^{-1} q^{2s}}&\text{if }v(\alpha y)\geq0,\\
      0&\text{if }v(\alpha y)<0.
      \end{cases}
   \end{align}
  \item Suppose that $\chi$ is unramified. Let $\beta=\chi^2(\varpi)$ and assume that $q^{2s}\neq\beta$. If $f_{s,\chi}^{\rm St}\in V_{s,\chi}$ is the Steinberg vector defined in \eqref{localpadiceq4 char}, then
   \begin{align}\label{calphaplemma-eq31}
    &(W^\alpha_{s,\chi} f_{s,\chi}^{\rm St})(\mat{y}{}{}{1})\nonumber\\
    &=\begin{cases}
       \displaystyle\frac{(1-\beta q^{-2s-1})\chi^2(\alpha)|\alpha|^{2s}\chi(y)|y|^{1/2+s}-(1-\beta^{-1}q^{2s-1})\chi(y)^{-1}|y|^{1/2-s}}{1-\beta^{-1}q^{2s}}&\text{if }v(\alpha y)\geq0,\\
       0&\text{if }v(\alpha y)<0.
      \end{cases}
   \end{align}
  \item Suppose $a(\chi)>0$. Then, with $f_{s,\chi}^{\rm new}$ as in \eqref{fnewdefeq},
   \begin{equation}\label{calphaplemma-eq4}
    (W_{s,\chi}^\alpha f_{s,\chi}^{\rm new})(\mat{y}{}{}{1})=
    \begin{cases}
     0 &\text{if } v(\alpha y)\neq 0,\\
     \displaystyle q^{-a(\chi)(2s+1)}|y|^{1/2-s} \chi(-\alpha)\varepsilon(0,\chi,\psi)&\text{if } v(\alpha y)=0.
    \end{cases} 
  \end{equation}
 \end{enumerate}
\end{lemma}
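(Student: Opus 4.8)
The plan is to treat all three parts uniformly by first reducing to $y=1$ via the transformation law \eqref{localpadicWeq10b}, which gives $(W_{s,\chi}^\alpha f)(\mat{y}{}{}{1}) = \chi(y)^{-1}|y|^{1/2-s}(W_{s,\chi}^{\alpha y}f)(1)$. Thus it suffices to compute $(W_{s,\chi}^{\alpha'}f)(1)$ with $\alpha'=\alpha y$, and to restore the $y$-dependence at the very end; note that the hypotheses $v(\alpha y)\geq0$ (parts (i),(ii)) and $v(\alpha y)=0$ (part (iii)) are exactly conditions on $v(\alpha')$. Throughout I would work in ${\rm Re}(s)>0$, where the defining integral \eqref{localpadicWeq1} converges, and invoke analytic continuation (the stabilization of the limit \eqref{localpadicWeq2}) to extend the resulting rational expressions to all $s$ with $q^{2s}\neq\beta$.

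For parts (i) and (ii) I would first establish the character analogue of the master formula \eqref{calphaplemmaeq2}: for any $\Gamma_0(\p)$-invariant $f\in V_{s,\chi}$ with $\chi$ unramified, $\beta=\chi^2(\varpi)$, and $v(\alpha)\geq0$,
\[ (W_{s,\chi}^\alpha f)(1) = f(\mat{}{-1}{1}{}) + f(1)\,\frac{\chi^2(\alpha)|\alpha|^{2s}(1-\beta q^{-2s-1})-(1-q^{-1})}{1-\beta^{-1}q^{2s}}. \]
This is proved by splitting $\int_F$ at $v(b)=0$. On $v(b)\geq0$ the matrix $\mat{1}{b}{}{1}$ lies in $\Gamma_0(\p)$ and $\psi(-\alpha b)=1$, so that block contributes $f(\mat{}{-1}{1}{})$; on $v(b)<0$ I use the identity $\mat{}{-1}{1}{}\mat{1}{b}{}{1}=\mat{b^{-1}}{-1}{}{b}\mat{1}{}{b^{-1}}{1}$ together with \eqref{localpadiceq-1} and the $\Gamma_0(\p)$-invariance to pull out $f(1)|b|^{-2s-1}\chi^2(b)^{-1}$, reducing the remainder to a geometric series in $w:=\beta q^{-2s}$ built from the elementary integrals $\int_{v(b)=-m}\psi(-\alpha b)\,db$. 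Summing the series yields the displayed coefficient, and these same elementary integrals vanish identically when $v(\alpha)<0$, giving the stated $0$. Specializing $(f(1),f(\mat{}{-1}{1}{}))$ to $(1,1)$ and to $(1,-q^{-1})$ produces the spherical and Steinberg values at $y=1$, and feeding them through the transformation law gives \eqref{calphaplemma-eq3} and \eqref{calphaplemma-eq31}; a check against \eqref{calphaplemmaeq2} (set $\chi=1$, $\beta=1$) confirms the coefficient.

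For part (iii) the newform is only $\Gamma_0(\p^{2a})$-invariant ($a:=a(\chi)$), so the master formula does not apply and a direct computation is needed. The key is to pin down the support: identifying $B\backslash G$ with $\mathbb{P}^1(F)$ via the bottom row, the double coset $B\mat{1}{}{\varpi^a}{1}\Gamma_0(\p^{2a})$ consists exactly of the points $[x:1]$ with $v(x)=a$, and since $\mat{0}{-1}{1}{b}$ corresponds to $[b^{-1}:1]$, the integrand $f_{s,\chi}^{\rm new}(\mat{0}{-1}{1}{b})$ is supported precisely on $v(b)=-a$. As this region is compact, the integral converges for all $s$ with no limit required. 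On $v(b)=-a$ I write $b^{-1}=\varpi^a u$ with $u\in\OF^\times$, apply the same Bruhat identity to extract $|b|^{-2s-1}\chi^2(b)^{-1}$, and evaluate $f_{s,\chi}^{\rm new}(\mat{1}{}{\varpi^a u}{1})$ using the clean decomposition $\mat{1}{}{\varpi^a u}{1}=\mat{u^{-1}}{}{}{1}\mat{1}{}{\varpi^a}{1}\mat{u}{}{}{1}$ and the normalization \eqref{fnewdefeq}; this collapses to $f_{s,\chi}^{\rm new}(\mat{0}{-1}{1}{b})=\chi(b)^{-1}q^{-a(2s+1)}$. The Whittaker integral then becomes $q^{-a(2s+1)}\int_{v(b)=-a}\chi(b)^{-1}\psi(-\alpha b)\,db$, and the substitution $x=-\alpha b$ (using $\chi(-1)=\chi(-1)^{-1}$) turns it into $q^{-a(2s+1)}\chi(-\alpha)|\alpha|^{-1}\int_{v(x)=v(\alpha)-a}\chi^{-1}(x)\psi(x)\,dx$; by the standard vanishing of ramified Gauss sums away from the conductor this is zero unless $v(\alpha)=0$, in which case it equals $\varepsilon(0,\chi,\psi)$ by \eqref{epsgausslemmaeq2}. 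Restoring $y$ via the transformation law yields \eqref{calphaplemma-eq4}.

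The main obstacle is part (iii): the entire computation hinges on the precise support statement and on having the explicit newform of \cite{Schmidt2002} under sufficient control to reduce $f_{s,\chi}^{\rm new}(\mat{0}{-1}{1}{b})$ to a single power $\chi(b)^{-1}$. Once the integral is recognized as the defining integral of the local $\varepsilon$-factor, the identification with $\chi(-\alpha)\varepsilon(0,\chi,\psi)$ and the vanishing for $v(\alpha y)\neq0$ follow from standard properties of Gauss sums.
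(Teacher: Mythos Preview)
Your argument is correct, but it diverges from the paper's in both halves.

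For parts (i) and (ii) the paper does not redo the integral at all: since $\chi$ is unramified it writes $\chi=|\cdot|^t$ for the unique $t$ with $q^{-t}=\chi(\varpi)$, so that $V_{s,\chi}=V_{s+t}$ and $f_{s,\chi}^{\rm sph}=f_{s+t}^{\rm sph}$, $f_{s,\chi}^{\rm St}=f_{s+t}^{\rm St}$. Then \eqref{calphaplemma-eq3} and \eqref{calphaplemma-eq31} drop out of Lemma~\ref{calphaplemma} by the substitution $s\mapsto s+t$ (so $q^{2s}\mapsto\beta^{-1}q^{2s}$, $|\alpha|^{2s}\mapsto\chi^2(\alpha)|\alpha|^{2s}$, $|y|^{1/2\pm s}\mapsto\chi(y)^{\pm1}|y|^{1/2\pm s}$). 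Your approach of re-deriving the $\Gamma_0(\p)$-invariant master formula with the character inserted is equally valid and self-contained, but it duplicates the work already done in Lemma~\ref{calphaplemma}; the paper's substitution trick is the quicker route and explains why the formulas look like ``$\beta$-twists'' of the earlier ones.

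For part (iii) the paper simply cites \cite[Lemma~2.2.1]{Schmidt2002}, whereas you supply the computation in full. Your steps---identifying the support of $f_{s,\chi}^{\rm new}(\mat{0}{-1}{1}{b})$ as $v(b)=-a$, using $\mat{1}{}{\varpi^a u}{1}=\mat{u^{-1}}{}{}{1}\mat{1}{}{\varpi^a}{1}\mat{u}{}{}{1}$ and the normalization \eqref{fnewdefeq} to get $f_{s,\chi}^{\rm new}(\mat{0}{-1}{1}{b})=q^{-a(2s+1)}\chi(b)^{-1}$, then substituting $x=-\alpha b$ to land on the defining integral \eqref{epsgausslemmaeq2} of $\varepsilon(0,\chi,\psi)$---are exactly the content of that cited lemma, so you have effectively reproduced its proof. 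The vanishing for $v(\alpha y)\neq0$ via the standard Gauss-sum orthogonality is correct; note that at $v(\alpha)=0$ the factor $|\alpha|^{-1}$ you carry is $1$, so it harmlessly disappears before you restore $y$.
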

\begin{proof}
i) and ii) follow by setting $\chi=|\cdot|^t$ and replacing $s$ by $s+t$ in Lemma~\ref{calphaplemma}. For iii) see \cite[Lemma~2.2.1]{Schmidt2002}.
\end{proof}
\subsubsection*{Archimedean case}
We consider the archimedean $V_{s,\chi}$, where $\chi$ is either trivial or the sign character. Recall that it is spanned by the functions in \eqref{localrealeq2-char0}. Generalizing \eqref{localrealWeq5}, we have
\begin{equation}\label{localrealWeq5-char}
	(W_{(k-1)/2,\chi}^\alpha f_{(k-1)/2, \chi}^{(k)})\left(\mat{1}{x}{}{1}\mat{y}{}{}{1}\right)=
	\begin{cases}
		0&\text{if }\alpha y>0,\\[1ex]
		\displaystyle\frac{(2\pi i)^k}{(k-1)!}\, \chi(y) |y|^{k/2}|\alpha|^{k-1}e^{-2\pi i\alpha(x+iy)}&\text{if }\alpha y <0.
	\end{cases}
\end{equation}
In our application we will have $y>0$, in which case there is no difference to the previous formula.
\subsubsection*{Global case}
Let $\xi$ be our primitive Dirichlet character of conductor $u>1$, and $\chi$ the corresponding character of $\Q^\times\backslash\A^\times$. We consider the global Whittaker functional $W_{s,\chi}^\alpha$ defined as in \eqref{globalWeq1} with $f\in V_{s,\chi}$, with the same global additive character~$\psi=\prod\psi_v$. Then Lemma~\ref{alphalatticelemma} is still true. 

For a positive integer $n$, we define the generalized power sum as follows,
\begin{equation}\label{sigma-char}
	\sigma_t^{\xi}(n)=\xi(n)\sum_{m\mid n}\xi^{-2}(m)m^t,
\end{equation}
the sum being taken over positive divisors of $n$. We write $\sigma^\xi$ for $\sigma^\xi_1$. The following lemma is analogous to Lemma~\ref{Walphasigmalemma}.
\begin{lemma}\label{Walphasigmalemma-char}
 Let $\alpha$ be a non-zero integer. If $(\alpha,u)=1$, then the following formulas hold.
 \begin{enumerate}
  \item For ${\rm Re}(s) > 0$,
   \begin{align}\label{Walphasigmalemma-eq21}
    &\bigg(\prod_{p \mid u}(W^\alpha_{s,\chi} f_{s,\chi_p}^{\rm new})(1)\bigg)\bigg(\prod_{p\nmid u}(W^\alpha_{s,\chi} f_{s,\chi_p}^{\rm sph})(1)\bigg)\nonumber\\
    &\hspace{10ex}=\frac{\chi_\infty(-\alpha)|\alpha|^{-2s}}{u^{2s+1}L(2s+1,\xi^2)}\bigg(\prod_{p<\infty}\varepsilon(0,\chi_p,\psi_p)\bigg)\sigma^\xi_{2s}(|\alpha|).
   \end{align}
  \item For $s = 1/2$ and a positive, square-free integer $N$ with $(N,u)=1$ and  $\chi_p^2=1$ for $p\mid N$,
   \begin{align}\label{Walphasigmalemma-eq22}
    &\bigg(\prod_{p\mid N}(W^\alpha_{1/2,\chi} f_{1/2,\chi_p}^{\rm St})(1)\bigg)\bigg(\prod_{p \mid u}(W^\alpha_{1/2,\chi} f_{1/2,\chi_p}^{\rm new})(1)\bigg)\bigg(\prod_{p\nmid uN}(W^\alpha_{1/2,\chi} f_{1/2,\chi_p}^{\rm sph})(1)\bigg)\nonumber\\
    &\hspace{10ex}=\frac{\chi_\infty(-\alpha)|\alpha|^{-1}\mu(N)}{u^2L(2,\xi^2)\varphi(N)}\bigg(\prod_{p<\infty}\varepsilon(0,\chi_p,\psi_p)\bigg)\xi(|\alpha/\alpha'|)\,\sigma^\xi(|\alpha'|),
    \end{align}
   where $\alpha'$ is the part of $\alpha$ relatively prime to $N$.
 \end{enumerate}
 If $(\alpha,u)\neq1$, then the left hand sides of \eqref{Walphasigmalemma-eq21} and \eqref{Walphasigmalemma-eq22} are zero.
\end{lemma}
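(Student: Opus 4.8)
The plan is to reduce the statement to the local Whittaker values of Lemma~\ref{calphaplemma1}, following the same pattern by which Lemma~\ref{Walphasigmalemma} was deduced from Lemma~\ref{calphaplemma}. Assuming $f=\otimes f_v$ is the indicated pure tensor, the factorization \eqref{globalWeq2} writes each side as a product over the finite places of the local values $(W^\alpha_{s,\chi_p}f_p)(1)$: the newform value \eqref{calphaplemma-eq4} at $p\mid u$, the Steinberg value \eqref{calphaplemma-eq31} at $p\mid N$ (in part ii) only), and the spherical value \eqref{calphaplemma-eq3} at the remaining primes. The vanishing assertion for $(\alpha,u)\neq1$ is then immediate: if $p\mid u$ and $p\mid\alpha$ then $v_p(\alpha)>0$, so the case $v(\alpha y)\neq0$ of \eqref{calphaplemma-eq4} (at $y=1$) forces the local newform value, and hence the whole product, to vanish.

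Assuming henceforth $(\alpha,u)=1$, I would assemble the three kinds of local factors separately. The newform factors at $p\mid u$ contribute $\prod_{p\mid u}p^{-a(\chi_p)(2s+1)}=u^{-(2s+1)}$ (using $a(\chi_p)=v_p(u)$), the characters $\chi_p(-\alpha)$, and the $\varepsilon$-factors, which combine into $\prod_{p<\infty}\varepsilon(0,\chi_p,\psi_p)$ because the unramified local factors are trivial. The spherical factors I would handle exactly as in \eqref{Walphasigmalemmaeq4}: pulling out the terms $(1-\chi_p^2(p)p^{-2s-1})$ produces the Euler product $L(2s+1,\xi^2)^{-1}$, while the remaining geometric sums $\sum_{j=0}^{v_p(\alpha)}(\chi_p^2(p)p^{-2s})^j=\sum_{j=0}^{v_p(\alpha)}\xi(p)^{2j}p^{-2sj}$ multiply out to the twisted divisor sum $\sum_{d\mid|\alpha|}\xi^2(d)d^{-2s}$, which equals $\xi(|\alpha|)|\alpha|^{-2s}\sigma^\xi_{2s}(|\alpha|)$ after the substitution $d\mapsto|\alpha|/d$ in the definition \eqref{sigma-char}.

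The step that ties everything together is the character bookkeeping. Since $\chi$ is trivial on $\Q^\times$ one has $1=\chi(-\alpha)=\chi_\infty(-\alpha)\prod_{p\mid u}\chi_p(-\alpha)\prod_{p\nmid u}\chi_p(-\alpha)$, and each unramified factor equals $\chi_p(-\alpha)=\xi(p)^{v_p(\alpha)}$ by \eqref{Diri1}, so that $\prod_{p\nmid u}\chi_p(-\alpha)=\xi(|\alpha|)$ and therefore $\prod_{p\mid u}\chi_p(-\alpha)=\chi_\infty(-\alpha)\xi(|\alpha|)^{-1}$. The factor $\xi(|\alpha|)^{-1}$ cancels the $\xi(|\alpha|)$ coming from the spherical divisor sum, which yields \eqref{Walphasigmalemma-eq21}. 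For part ii) the only change is that the spherical factors at $p\mid N$ are replaced by Steinberg factors; since $\chi_p^2=1$ there, these reduce to the non-character case of \eqref{Walphasigmalemmaeq5}, contributing $\prod_{p\mid N}(1-p)^{-1}=\mu(N)/\varphi(N)$ and restricting the divisor sum to the part $\alpha'$ coprime to $N$, while their leftover factors $(1-p^{-2})$ supply exactly the Euler factors of $L(2,\xi^2)$ absent from the product over $p\nmid uN$. The residual character simplifies as $\xi(|\alpha|)^{-1}\xi(\alpha')=\xi(|\alpha/\alpha'|)$ once one uses $\xi(p)=\pm1$ for $p\mid N$.

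I expect the main obstacle to be precisely this character bookkeeping: tracking the $\chi_p(-\alpha)$, the $\varepsilon$-factors, and the twisted Euler products simultaneously so that a single clean twisted divisor sum $\sigma^\xi$ survives on the right. The geometric-series and $L$-function manipulations are routine adaptations of the proof of Lemma~\ref{Walphasigmalemma}; the genuinely new content lies in verifying these cancellations, via the global triviality of $\chi$ on $\Q^\times$ together with the quadratic condition $\chi_p^2=1$ at the primes dividing $N$.
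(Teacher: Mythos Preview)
Your proposal is correct and follows essentially the same route as the paper's proof: both factor the product into the three classes of local Whittaker values from Lemma~\ref{calphaplemma1}, extract $u^{-(2s+1)}$ and the $\varepsilon$-factors from the newform contributions, pull the Euler product $L(2s+1,\xi^2)^{-1}$ out of the spherical factors, and then use the global triviality of $\chi$ on $\Q^\times$ (equivalently~\eqref{Diri2}) to convert the leftover characters into $\chi_\infty(-\alpha)$ and a single $\xi(|\alpha|)$ that gets absorbed into $\sigma^\xi_{2s}$. The only cosmetic difference is that you write the spherical geometric sums as $\sum_j(\xi(p)^2p^{-2s})^j$ and then substitute $d\mapsto|\alpha|/d$, whereas the paper factors out $\chi_p^2(\alpha)|\alpha|_p^{2s}$ first and obtains $\sum_j(\xi(p)^{-2}p^{2s})^j$ directly; these are of course the same sum read in opposite order.
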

\begin{proof}
The last statement follows from \eqref{calphaplemma-eq4}. We will therefore assume that $(\alpha,u)=1$.

i) From \eqref{calphaplemma-eq3} and \eqref{calphaplemma-eq4} we get
\begin{align*}
 &\bigg(\prod_{p \mid u}(W^\alpha_{s,\chi} f_{s,\chi_p}^{\rm new})(1)\bigg)\bigg(\prod_{p\nmid u}(W^\alpha_{s,\chi} f_{s,\chi_p}^{\rm sph})(1)\bigg)\\
 &=\bigg(\prod_{p \mid u}p^{-a(\chi_p)(2s+1)}
  \chi_p(-\alpha)\varepsilon(0,\chi_p,\psi_p)\bigg)\bigg(\prod_{p\nmid u}\frac{(\chi_p^2(\alpha)|\alpha|_p^{2s}-\beta_p^{-1} p^{2s})(1-\beta_p p^{-2s-1})}{1-\beta_p^{-1} p^{2s}}\bigg)\\
 &=\frac1{\prod_{p\nmid u}L(2s+1,\chi_p^2)}\bigg(\prod_{\substack{p\mid u}}p^{-v_p(u)(2s+1)}\chi_p(-\alpha)\varepsilon(0,\chi_p,\psi_p) \bigg)\bigg(\prod_{\substack{p\nmid u}}\frac{\chi_p^2(\alpha)|\alpha|_p^{2s}-\beta_p^{-1} p^{2s}}{1-\beta_p^{-1} p^{2s}}\bigg)\\
 &=\frac{1}{u^{2s+1}L(2s+1,\xi^2)}\bigg(\prod_{\substack{p\mid u}}\chi_p(-\alpha)\varepsilon(0,\chi_p,\psi_p) \bigg)\bigg(\prod_{p\nmid u}\chi_p^2(\alpha)|\alpha|_p^{2s}\bigg)\bigg(\prod_{p\nmid u}\frac{1-(\beta_p^{-1} p^{2s})^{v_p(\alpha)+1}}{1-\beta_p^{-1} p^{2s}}\bigg)\\
 &=\frac{\chi_\infty(-\alpha)|\alpha|_\infty^{-2s}}{u^{2s+1}L(2s+1,\xi^2)}\bigg(\prod_{p\mid u}\varepsilon(0,\chi_p,\psi_p) \bigg)\bigg(\prod_{p\nmid u}\chi_p(-\alpha)\bigg)\bigg(\prod_{p\nmid u}(1+\beta_p^{-1}p^{2s}+\cdots+(\beta_p^{-1}p^{2s})^{v_p(\alpha)})\bigg)\\
 &\stackrel{\eqref{Diri2}}{=}\frac{\chi_\infty(-\alpha)|\alpha|_\infty^{-2s}\xi(|\alpha|)}{u^{2s+1}L(2s+1,\xi^2)}\bigg(\prod_{p<\infty}\varepsilon(0,\chi_p,\psi_p) \bigg)\bigg(\prod_{p\nmid u}(1+\xi(p)^{-2}p^{2s}+\cdots+(\xi(p)^{-2}p^{2s})^{v_p(\alpha)})\bigg)\\
 &=\frac{\chi_\infty(-\alpha)|\alpha|_\infty^{-2s}\xi(|\alpha|)}{u^{2s+1}L(2s+1,\xi^2)}\bigg(\prod_{p<\infty}\varepsilon(0,\chi_p,\psi_p) \bigg)\bigg(\sum_{m\mid\alpha}\xi(m)^{-2}m^{2s}\bigg).
\end{align*}
In view of \eqref{sigma-char}, this proves i).

ii) follows from a similar calculation, using \eqref{calphaplemma-eq3}, \eqref{calphaplemma-eq31} and \eqref{calphaplemma-eq4}.
\end{proof}
\subsection{The classical Eisenstein series with character}\label{Eischarsec}
We continue to let $\xi$ be our  non-trivial primitive Dirichlet character of conductor $u>1$. For an even integer $k\geq2$, let
\begin{equation}\label{Eisexampleseq6-char}
	E_{k,\xi}(z)=C(k,\xi)\sum_{n=1}^\infty\sigma_{k-1}^{\xi}(n)e^{2\pi inz},
\end{equation}
with $\sigma_{k-1}^\xi$ defined in \eqref{sigma-char}, and
\begin{equation}\label{den of C(k, xi)}
    C(k,\xi)=\displaystyle\frac{(2\pi i)^k\,G(\xi)}{(k-1)!\,u^k L(k,\xi^{2})}.
\end{equation}
We note that this normalization of the Eisenstein series differs from the one in \cite[Sects.~4.5, 4.6]{DiamondShurman2005}. We choose the form in \eqref{Eisexampleseq6-char} since the following result works out nicely coming from the adelic Eisenstein series.

\begin{theorem}\label{Ekchartheorem}
 Let $\xi$ be a primitive Dirichlet character modulo $u>1$, and let $\chi$ be the corresponding character of $\Q^\times\backslash\A^\times$. Let $k\geq2$ be an even integer and $f_{k,\chi}\in V_{(k-1)/2,\chi}$ be the following pure tensor,
	\begin{equation}\label{Ekchartheoremeq0}
		f_{k,\chi}=f_{(k-1)/2,\chi_\infty}^{(k)}\otimes\Big(\otimes_{p\mid u}f_{(k-1)/2,\chi_p}^{\rm new}\Big)\otimes\Big(\otimes_{p\nmid u}f_{(k-1)/2,\chi_p}^{\rm sph}\Big).
	\end{equation}
	Then $E(\cdot,f_{k,\chi})$ is the automorphic form corresponding to $E_{k,\xi}$. It is a holomorphic modular form of weight $k$ with respect to $\Gamma_0(u^2)$.
\end{theorem}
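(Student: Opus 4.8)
The plan is to follow the template of the proof of Theorem~\ref{Ektheorem}, carrying the character $\chi$ through every step. First I would write the Fourier expansion of $E(\cdot,f_{k,\chi})$: for $k\geq4$ the series \eqref{Fouriereq1} converges absolutely at $s=(k-1)/2$ and Lemma~\ref{Fourierlemma1} gives
$$E(g,f_{k,\chi})=f_{k,\chi}(g)+(A_{(k-1)/2,\chi}f_{k,\chi})(g)+\sum_{\alpha\neq0}(W^\alpha_{(k-1)/2,\chi}f_{k,\chi})(g),$$
while for $k=2$ I would obtain the same expansion by the analytic continuation argument of Section~\ref{Sect:Eisenstein series} (the Whittaker sum continues as in Lemma~\ref{Fourierlemma2}, and $(A_{s,\chi}f_{k,\chi})(g)$ continues via \eqref{globaleq-5}). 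I would then read off the classical form through \eqref{Fouriereq10}, i.e.\ $E_{k,\xi}(z)=y^{-k/2}E(\mat{1}{x}{}{1}\mat{y}{}{}{1},f_{k,\chi})$ with $z=x+iy$, $y>0$.

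The first substantive step is that the intertwining term vanishes, $(A_{(k-1)/2,\chi}f_{k,\chi})=0$. In the global formula \eqref{globaleq-5}, the archimedean factor $A_{s,\chi_\infty}f^{(k)}_{s,\chi_\infty}$ vanishes at $s=(k-1)/2$ by \eqref{localrealeq6}. For $k\geq4$ this already kills the product, since the remaining factors (including $L(2s,\chi^2)$) are finite there. For $k=2$ one has $s=1/2$; if $\chi^2\neq1$ then $L(2s,\chi^2)$ is holomorphic at $1/2$ and the simple zero of the archimedean factor wins, whereas if $\chi^2=1$ the factor $L(2s,\chi^2)=\zeta(2s)$ has a pole. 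I regard this last case as the main obstacle. The resolution, following the discussion around \eqref{globaleq-5}, is that when $\chi^2=1$ each $\chi_p$ with $p\mid u$ is ramified quadratic, so $V_{1/2,\chi_p}=\chi_p V_{1/2}$ is reducible and the newvector $f^{\rm new}_{1/2,\chi_p}$ of \eqref{fnewdefeq} lies in the Steinberg-type subrepresentation $\chi_p\St_{\GL(2)}$, hence is annihilated by $A_{1/2,\chi_p}$; since $u>1$ forces at least one such place, the extra zero cancels the pole of $\zeta(2s)$ and the product again vanishes.

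Next I would compute the Fourier coefficients. The constant term $f_{k,\chi}(\mat{1}{x}{}{1}\mat{y}{}{}{1})$ is zero, because at any $p\mid u$ the newvector is supported on $B\mat{1}{}{\varpi^{a(\chi_p)}}{1}\Gamma_0(\p^{2a(\chi_p)})$ and so vanishes at the identity. For $\alpha\neq0$ I would factor $W^\alpha_{(k-1)/2,\chi}f_{k,\chi}$ as a product of local Whittaker functions \eqref{globalWeq2}, evaluate the archimedean factor by \eqref{localrealWeq5-char} (only $\alpha<0$ survives, since $y>0$), and collapse the finite places using Lemma~\ref{Walphasigmalemma-char}(i) together with Lemma~\ref{epsgausslemma}, which turns $\prod_{p<\infty}\varepsilon(0,\chi_p,\psi_p)$ into the Gauss sum $G(\xi)$. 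Setting $s=(k-1)/2$ (so $2s=k-1$, $2s+1=k$), writing $n=-\alpha$, and noting $\chi_\infty(-\alpha)=\chi_\infty(n)=1$ and $|\alpha|^{-(k-1)}|\alpha|^{k-1}=1$, the $n$-th coefficient collapses precisely to $C(k,\xi)\sigma^\xi_{k-1}(n)$ with $C(k,\xi)$ as in \eqref{den of C(k, xi)}. This matches \eqref{Eisexampleseq6-char}, and the vanishing of the finite product for $(\alpha,u)\neq1$ is consistent with $\sigma^\xi_{k-1}(n)=0$ there.

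Finally, for the modularity claim I would invoke the correspondence of \eqref{Fouriereq9}--\eqref{Fouriereq10}: the central character of $V_{s,\chi}$ is trivial, so $E(\cdot,f_{k,\chi})$ is invariant under the adelic center, and $f_{k,\chi}$ is right-invariant under $\Gamma_0(p^{2v_p(u)}\Z_p)$ at $p\mid u$ (since $a(\chi_p)=v_p(u)$) and under $K_p$ at $p\nmid u$, whence the associated classical function transforms under $\SL(2,\Q)\cap\prod_pK'_p=\Gamma_0(u^2)$. Holomorphy is immediate from the archimedean vector being the lowest weight vector of $\mathcal{D}^{\rm hol}_{k-1}$: the expansion contains only the holomorphic exponentials $e^{2\pi inz}$ with $n\geq1$, and --- in contrast to the full-level $E_2$ --- no $1/y$ term appears, precisely because $A_{1/2,\chi}f_{2,\chi}=0$. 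I expect the vanishing of the global intertwining operator in the quadratic case $\chi^2=1$ to be the only genuinely delicate point; the rest is bookkeeping of local factors.
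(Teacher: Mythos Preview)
Your proposal is correct and follows essentially the same route as the paper: write the Fourier expansion, kill the intertwining term via \eqref{globaleq-5} (using for $k=2$ the archimedean zero together with the discussion after \eqref{globaleq-5} for the $\chi^2=1$ case), observe that $f_{k,\chi}$ vanishes at archimedean $g$ by the support of the local newvectors, and then identify the Whittaker sum with $E_{k,\xi}$ via Lemma~\ref{Walphasigmalemma-char}(i), \eqref{localrealWeq5-char}, and Lemma~\ref{epsgausslemma}. Your final paragraph on the $\Gamma_0(u^2)$-invariance and holomorphy spells out what the paper leaves implicit, and your treatment of the $\chi^2=1$, $k=2$ case is exactly the argument the paper sketches after \eqref{globaleq-5}.
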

\begin{proof}
If $k\geq4$, then we have, similar to Lemma~\ref{Fourierlemma1},
\begin{equation}\label{Ekchareq2}
	E(g,f_{k,\chi})=f_{k,\chi}(g)+(A_{(k-1)/2,\chi}f_{k,\chi})(g)+\sum_{\substack{\alpha\in\Z\\\alpha\neq0}}(W_{(k-1)/2,\chi}^\alpha f_{k,\chi})(g).
\end{equation}
It follows from \eqref{globaleq-5} that $A_{(k-1)/2,\chi}f_{k,\chi}=0$. Since there is at least one prime $p\mid u$, it follows from the definition of $f^{\rm new}_{s,\chi_p}$ that $f_{k,\chi}(g)=0$ for archimedean $g$. Hence
\begin{equation}\label{Ekchareq2b}
	E(g,f_{k,\chi})=\sum_{\substack{\alpha\in\Z\\\alpha\neq0}}(W_{(k-1)/2,\chi}^\alpha f_{k,\chi})(g)\qquad \text{for }g\in G(\R).
\end{equation}
If $k=2$, we obtain the same identity by analytic continuation; see the comments after \eqref{globaleq-5}.

Applying \eqref{Fouriereq10} to our function \eqref{Ekchareq2b}, we see from Lemma~\ref{Walphasigmalemma-char} (i) and \eqref{localrealWeq5-char} that the corresponding function on the upper half plane is given by
\begin{align}\label{Ekchareq3}
	F(z)&=y^{-k/2}\sum_{\substack{\alpha\in\Z\\\alpha\neq0}}(W_{(k-1)/2}^\alpha f_{k,\chi})(\mat{1}{x}{}{1}\mat{y}{}{}{1})\nonumber\\
	&=y^{-k/2}\sum_{\substack{\alpha\in\Z_{<0}\\(\alpha,u)=1}}	\displaystyle\frac{(2\pi i)^k}{(k-1)!}|y|^{k/2}|\alpha|^{k-1}e^{-2\pi i\alpha(x+iy)}
	\frac{\chi_\infty(-\alpha)|\alpha|^{-(k-1)}}{u^k L(k,\xi^2)}\bigg(\prod_{p<\infty}\varepsilon(0,\chi_p,\psi_p)\bigg)\sigma^\xi_{k-1}(|\alpha|)\nonumber\\
	&=\displaystyle\frac{(2\pi i)^k}{(k-1)!}
	\frac{1}{u^k L(k,\xi^2)}\bigg(\prod_{p<\infty}\varepsilon(0,\chi_p,\psi_p)\bigg)\sum_{n=1}^{\infty} \sigma_{k-1}^{\xi}(n) e^{2\pi i n z}.
\end{align}
Now we use Lemma~\ref{epsgausslemma} to complete the proof.
\end{proof}

\begin{remark}
The constant term in \eqref{Eisexampleseq6-char} is zero, i.e., the Eisenstein series $E_{k,\xi}$ vanishes at the cusp at infinity. This stems from the fact that both $f_{k,\chi}(g)$ and $(A_{(k-1)/2,\chi}f_{k,\chi})(g)$ in \eqref{Ekchareq2} are zero for archimedean $g$. It follows from \eqref{globaleq-5} that in fact $(A_{(k-1)/2,\chi}f_{k,\chi})(g)=0$ for any $g\in G(\A)$. Hence, for any $h\in\SL(2,\Q)$,
\begin{align}\label{Ekchareq4}
 (E_{k,\xi}|h)(z)&=y^{-k/2}E(\mat{1}{x}{}{1}\mat{y}{}{}{1}h_{\rm fin}^{-1},f_{k,\chi})\nonumber\\
  &=f_{k,\chi}(h_{\rm fin}^{-1})+y^{-k/2}\sum_{\substack{\alpha\in\Z\\\alpha\neq0}}(W_{(k-1)/2}^\alpha f_{k,\chi})(\mat{1}{x}{}{1}\mat{y}{}{}{1}h_{\rm fin}^{-1}).
\end{align}
The cusps are in bijection with $B(\Q)\backslash\SL(2,\Q)/\Gamma_0(u^2)$. Looking at the support of the local newforms in \eqref{fnewdefeq}, we see from \eqref{Ekchareq4} that $E_{k,\xi}$ vanishes at all cusps except the one represented by $\mat{1}{}{-u}{1}$.
\end{remark}
\subsection{Eisenstein series of weight \texorpdfstring{$2$}{} with level \texorpdfstring{$u^2N$}{}}
We continue to assume that $\xi$ is a primitive Dirichlet character modulo $u>1$ and $\chi$ is the corresponding character of $\Q^\times\backslash\A^\times$. Let $N$ be a square-free positive integer with $(u,N)=1$ and such that $\chi_p^2=1$ for $p\mid N$. (Note that, for $p\nmid u$, we have $\chi_p^2=1$ if and only if $\xi(p)^2=1$; see \eqref{Diri2}.) Consider the element of $V_{1/2,\chi}$ defined by
\begin{equation}\label{Eisleveleq6-char}
	f_{2,N,\chi}:=f_{1/2,\chi_\infty}^{(2)}\otimes\Big(\otimes_{p\mid u}f_{1/2,\chi_p}^{\rm new}\Big)\otimes\Big(\otimes_{p\mid N}f_{1/2,\chi_p}^{\rm St}\Big)\otimes\Big(\otimes_{p\nmid uN}f_{1/2,\chi_p}^{\rm sph}\Big).
\end{equation}
See \eqref{localpadiceq4 char} for the definition of $f_{1/2,\chi_p}^{\rm St}$. In the following result $\mu$ is the M\"obius function and $\varphi$ is Euler's function.
\begin{theorem}\label{tildeENtheorem-char}
 Let $\xi,\chi,u,N$ be as above and $f_{2,N,\chi}\in V_{1/2,\chi}$ be as in \eqref{Eisleveleq6-char}. Let $C(2,\xi)$ be as in \eqref{den of C(k, xi)} for $k=2$. Then the function on the upper half plane corresponding to $E(\cdot,f_{2,N,\chi})$ is given by 
    \begin{equation}
	  E_{2,N,\xi}(z)=C(2, \xi)\frac{\mu(N)}{\varphi(N)}\sum_{n=1}^{\infty}\xi\left(\frac n{n'}\right)\sigma^{\xi}(n^\prime)e^{2\pi i n z},
	\end{equation}
 where $n'$ is the part of $n$ relatively prime to $N$. It is a holomorphic modular form of weight $2$ with respect to $\Gamma_0(u^2N)$.
\end{theorem}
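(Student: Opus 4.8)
The plan is to mimic the proofs of Theorem~\ref{Ektheorem} and Theorem~\ref{tildeENtheorem}, substituting the character-twisted local ingredients assembled above. First I would record the Fourier expansion. Proceeding as in Lemma~\ref{Fourierlemma1}, but now for $V_{s,\chi}$, one gets for ${\rm Re}(s)>1/2$
\[
 E(g,f_{2,N,\chi})=f_{2,N,\chi}(g)+(A_{1/2,\chi}f_{2,N,\chi})(g)+\sum_{\substack{\alpha\in\Z\\\alpha\neq0}}(W_{1/2,\chi}^\alpha f_{2,N,\chi})(g),
\]
where at the critical value $s=1/2$ the right-hand side is understood via analytic continuation, exactly as in the $k=2$ case of Theorem~\ref{Ektheorem}. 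The sum may be restricted to nonzero integers $\alpha$ because the local Whittaker functions vanish for $v_p(\alpha)<0$; moreover part~(iii) of Lemma~\ref{calphaplemma1} forces $(\alpha,u)=1$, since there is at least one prime $p\mid u$.

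The key structural point, which I expect to be the main obstacle, is that the entire constant term $f_{2,N,\chi}(g)+(A_{1/2,\chi}f_{2,N,\chi})(g)$ vanishes when $g$ has trivial finite part. For the first summand this is immediate from the support of the local newform in \eqref{fnewdefeq}: at a prime $p\mid u$ one has $f^{\rm new}_{1/2,\chi_p}(1)=0$. For the intertwining term I would invoke \eqref{globaleq-5}: the archimedean factor $\lim_{s\to1/2}(A_{s,\infty}f^{(2)}_{1/2,\chi_\infty})(g_\infty)$ is zero by \eqref{localrealeq6}, as the weight $k=2$ lies in the kernel of $A_{1/2,\infty}$. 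If $\chi^2\neq1$ then $L(2s,\chi^2)$ is holomorphic at $s=1/2$ and this zero survives; if $\chi^2=1$ the factor $L(2s,\chi^2)$ contributes a simple pole, but this is compensated exactly by the mechanism described in the comments after \eqref{globaleq-5}, namely a finite place carrying a vector in the kernel of the local operator (a prime $p\mid N$ with the Steinberg vector of \eqref{localpadiceq7 char}, or, when $N=1$, a prime $p\mid u$ where $f^{\rm new}_{1/2,\chi_p}$ lies in $\chi_p\St_{\GL(2,\Q_p)}$). In every case $(A_{1/2,\chi}f_{2,N,\chi})(g)=0$. This vanishing is precisely what distinguishes the present situation from the classical $E_2$.

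It then remains to assemble the nonzero Fourier coefficients. For $\alpha<0$ and $y>0$ the archimedean factor comes from \eqref{localrealWeq5-char} with $k=2$, and the product over finite places from Lemma~\ref{Walphasigmalemma-char}(ii). Writing $n=-\alpha>0$ and applying \eqref{Fouriereq10} with $k=2$, the prefactor $y^{-1}$ cancels the $y$ from the archimedean Whittaker function and the leading $n$ cancels the $|\alpha|^{-1}=n^{-1}$ from the finite product, leaving a holomorphic $q$-expansion whose coefficient of $e^{2\pi inz}$ is a fixed scalar times $\xi(n/n')\,\sigma^\xi(n')$, with $n'$ the prime-to-$N$ part of $n$ and $\sigma^\xi$ as in \eqref{sigma-char}. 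Collecting the scalars $(2\pi i)^2$, $u^{-2}$, $L(2,\xi^2)^{-1}$, $\mu(N)/\varphi(N)$, together with the Gauss sum $\prod_{p<\infty}\varepsilon(0,\chi_p,\psi_p)=G(\xi)$ supplied by Lemma~\ref{epsgausslemma}, and comparing with \eqref{den of C(k, xi)} for $k=2$, reproduces exactly the claimed formula for $E_{2,N,\xi}(z)$; terms with $(n,u)\neq1$ drop out automatically since $\xi$ vanishes there.

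Finally, holomorphy and the level are read off at once. The resulting expansion involves only the holomorphic exponentials $e^{2\pi inz}$ with $n\geq1$ and has no $y^{-1}$ term, so $L_2\,E_{2,N,\xi}=0$ by Proposition~\ref{diffopHproposition}; equivalently, the vanishing of the global intertwining operator means $f\mapsto E(\cdot,f)$ now commutes with $L$ at the archimedean place, so no non-holomorphic term appears and the form has weight $2$. The level $\Gamma_0(u^2N)$ follows from the right-invariance properties of $f_{2,N,\chi}$ via the correspondence \eqref{Fouriereq10}: the newform at each $p\mid u$ is $\Gamma_0(\p^{2a(\chi_p)})$-invariant with $\prod_{p\mid u}p^{2a(\chi_p)}=u^2$, the Steinberg vectors at $p\mid N$ are $\Gamma_0(\p)$-invariant, and the remaining places are spherical.
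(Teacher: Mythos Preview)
Your proposal is correct and follows essentially the same route as the paper, which simply says the proof is similar to that of Theorem~\ref{Ekchartheorem} using Lemma~\ref{Walphasigmalemma-char}~(ii) and \eqref{localrealWeq5-char}. You have in fact spelled out more detail than the paper does, correctly handling the vanishing of the constant term (via the support of $f^{\rm new}_{1/2,\chi_p}$) and of the intertwining term (via the case distinction $\chi^2\neq1$ versus $\chi^2=1$ described after \eqref{globaleq-5}), and then assembling the Fourier coefficients exactly as required.
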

\begin{proof}
The proof is similar to that of Theorem~\ref{Ekchartheorem}, making use of Lemma~\ref{Walphasigmalemma-char} (ii) and \eqref{localrealWeq5-char}.
\end{proof}

Up to normalization, the following Eisenstein series is defined in \cite[Sect.~4.6] {DiamondShurman2005},
\begin{equation}\label{tildeENtheoremeq1-char classical DS}
	\tilde E_{2,N,\xi}(z)=N\cdot C(2, \xi)\sum_{n=1}^\infty\sigma^{\xi}(n)e^{2\pi inNz}=N\cdot E_{2,\xi}(Nz).
\end{equation}
It is a holomorphic modular form of weight $2$ with respect to $\Gamma_0(u^2N)$. It is easily verified that $E(\cdot,\tilde f_{2,N,\chi})$, where $\tilde f_{2,N,\chi}=\mat{1}{}{}{N_{\rm fin}}f_{2,\chi}$, is the automorphic form corresponding to $\tilde E_{2,N,\xi}$. Here $N_{\rm fin}$ is defined in \eqref{Eisleveleq4} and $f_{2,\chi}\in V_{1/2,\chi}$ is the function from Theorem~\ref{Ekchartheorem}.

There is a result analogous to Proposition~\ref{tildeENrelationprop} which relates $\tilde E_{2,N,\xi}$ to $E_{2,N,\xi}$. To derive it, we define the following elements of the local Hecke algebra $\H_p$,
\begin{equation}\label{Eisleveleq7 char}
 \alpha_p={\rm char}(K_p),\quad\beta_p={\rm char}(\mat{1}{}{}{p}K_p),\quad\gamma_p=\frac1{1-p}(\alpha_p-\chi_p(p)\beta_p),\quad \delta_p=\frac{\mathrm{char}(\Gamma_0(p^{v_p(u)}))}{\mathrm{vol}(\Gamma_0(p^{v_p(u)}))}.
\end{equation}
Here we assume $p\nmid u$ for $\beta_p,\gamma_p$, and $p\mid u$ for $\delta_p$. For a square-free, positive integer $N$, let $B_{N, \chi},\, \tilde B_{N, \chi}\in\H_{\rm fin}$ be defined by
\begin{align}
 \label{Eisleveleq10 char}\tilde B_{N,\chi}&=\bigg(\bigotimes_{p\mid u}\delta_p\bigg)\otimes\bigg(\bigotimes_{p\mid N}\beta_p\bigg)\otimes\bigg(\bigotimes_{p\nmid uN}\alpha_p\bigg),\\
 \label{Eisleveleq11 char}B_{N,\chi}&=\bigg(\bigotimes_{p\mid u}\delta_p\bigg)\otimes\bigg(\bigotimes_{p\mid N}\gamma_p\bigg)\otimes\bigg(\bigotimes_{p\nmid uN}\alpha_p\bigg).
\end{align}
It follows that
\begin{equation}\label{Eisleveleq12 char}
 \tilde B_{N,\chi}f_{2,\chi}=\tilde f_{2,N, \chi},\qquad B_{N,\chi}f_{2,\chi}=f_{2,N, \chi},
\end{equation}
where as before $f_{2,\chi}\in V_{1/2,\chi}$ is the function from Theorem~\ref{Ekchartheorem}. For the second equality, note that $\gamma_p(f_{1/2,\chi_p}^{\rm sph})=f_{1/2,\chi_p}^{\rm St}$ by \eqref{localpadiceq7 char}.
\begin{proposition}\label{tildeENrelationprop char}
 For all square-free, positive integers $N$,
 \begin{equation}\label{tildeENrelationpropeq1 char}
  E_{2,N, \xi}=\frac{\xi(N)}{\varphi(N)}\sum_{M\mid N}\xi(M)\mu(M)\tilde E_{2,M,\xi},\qquad \tilde E_{2,N,  \xi}=\xi(N)\sum_{M\mid N}\varphi(M)E_{2,M, \xi}.
 \end{equation}
\end{proposition}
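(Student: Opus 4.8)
The plan is to follow the proof of Proposition~\ref{tildeENrelationprop} almost verbatim, the essential simplification being that in the presence of a nontrivial $\xi$ the global intertwining operator vanishes (see the discussion following \eqref{globaleq-5}), so that $f\mapsto E(\cdot,f)$ is a genuine homomorphism of $\H$-modules on the relevant subspace. Consequently it suffices to establish the two displayed identities at the level of the vectors $f_{2,N,\chi},\tilde f_{2,M,\chi}\in V_{1/2,\chi}$: once a linear relation among these vectors is proved, applying the linear map $f\mapsto E(\cdot,f)$ and invoking Theorem~\ref{tildeENtheorem-char} together with the identification of $E(\cdot,\tilde f_{2,M,\chi})$ with $\tilde E_{2,M,\xi}$ (the sentence preceding \eqref{Eisleveleq12 char}) transports it to the asserted identities among classical Eisenstein series. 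By \eqref{Eisleveleq12 char} we have $f_{2,N,\chi}=B_{N,\chi}f_{2,\chi}$ and $\tilde f_{2,M,\chi}=\tilde B_{M,\chi}f_{2,\chi}$, so everything reduces to a purely algebraic identity between $B_{N,\chi}$ and the $\tilde B_{M,\chi}$ inside $\H_{\rm fin}$.

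First I would expand the tensor factor $\bigotimes_{p\mid N}\gamma_p$ of $B_{N,\chi}$, using $\gamma_p=\frac1{1-p}(\alpha_p-\chi_p(p)\beta_p)$. Pulling out the scalar $\prod_{p\mid N}(1-p)^{-1}$ and distributing, each resulting monomial corresponds to a choice of $\beta_p$ on the primes dividing some divisor $M\mid N$ and $\alpha_p$ on the remaining primes dividing $N$; combined with the common factor $\bigotimes_{p\mid u}\delta_p$ and the $\alpha_p$ at $p\nmid uN$, this monomial is precisely $\tilde B_{M,\chi}$ as defined in \eqref{Eisleveleq10 char}. Its coefficient is $\prod_{p\mid M}(-\chi_p(p))$, which I would evaluate as a sign contributing the Möbius factor $\mu(M)$ (here $M$ is square-free) times the character product $\prod_{p\mid M}\chi_p(p)=\xi(M)$; the latter holds because each $\chi_p$ with $p\mid M$ is unramified with Satake parameter $\chi_p(p)=\xi(p)$ (recall $(N,u)=1$), so that \eqref{Diri2} applies. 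Together with $\prod_{p\mid N}(1-p)^{-1}=\mu(N)/\varphi(N)$, this writes $B_{N,\chi}$ as an explicit $\tilde B_{M,\chi}$-combination over $M\mid N$ and, after applying it to $f_{2,\chi}$ and forming the Eisenstein series, yields the first identity. I note that here no analogue of the ``$X_1$ subtraction'' from the proof of Proposition~\ref{tildeENrelationprop} is needed: in the character case $\tilde B_{N,\chi}$ is a single monomial rather than a difference, mirroring the fact that $\tilde E_{2,N,\xi}=N E_{2,\xi}(Nz)$ carries no subtracted level-one term.

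The second identity I would then obtain from the first by Möbius inversion over the lattice of (square-free) divisors of $N$, exactly as \eqref{Eisleveleq16} is deduced from \eqref{Eisleveleq15}. The one point requiring care is the interaction of the Dirichlet character with the inversion: the hypothesis $\chi_p^2=1$ for $p\mid N$ forces $\xi(p)=\pm1$ for every $p\mid N$, hence $\xi(M)^2=1$ for all $M\mid N$. This lets me identify $\xi(M)^{-1}$ with $\xi(M)$ throughout and recombine the $\mu$- and $\xi$-prefactors into the form stated in \eqref{tildeENrelationpropeq1 char}, so that both displayed relations hold simultaneously.

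I expect the main obstacle to be bookkeeping rather than conceptual: one must track the character values $\chi_p(p)=\xi(p)$ faithfully through the expansion and the subsequent inversion, correctly recombining the per-prime data into the global quantities $\varphi(N)$, $\mu(M)$, $\xi(M)$ and $\xi(N)$, and verify that the idempotents $\delta_p$ at the ramified places—which occur identically in $B_{N,\chi}$ and in every $\tilde B_{M,\chi}$—play no role in the combinatorics. A sanity check on a single prime $N=\ell$, where the expansion of $\gamma_\ell$ has only the two terms $M=1$ and $M=\ell$, pins down the constants and confirms the role of $\xi(\ell)^2=1$ in passing between the two forms.
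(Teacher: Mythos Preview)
Your proposal is correct and follows essentially the same route as the paper: expand $B_{N,\chi}$ by distributing $\bigotimes_{p\mid N}(\alpha_p-\chi_p(p)\beta_p)$ to obtain a linear combination of the $\tilde B_{M,\chi}$, use \eqref{Diri2} to convert $\prod_{p\mid M}\chi_p(p)$ into $\xi(M)$, M\"obius invert (using $\xi(N)^{-1}=\xi(N)$, which the paper also notes explicitly), apply both identities to $f_{2,\chi}$ via \eqref{Eisleveleq12 char}, and pass to classical Eisenstein series. Your remarks that no $X_1$-subtraction is needed and that the idempotents $\delta_p$ are inert in the combinatorics are accurate and match the paper's handling.
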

\begin{proof}
We calculate
\begin{align}\label{Eisleveleq15 char}
    B_{N,\chi}&=\bigg(\prod_{p\mid N}\frac1{1-p}\bigg)\bigg(\bigotimes_{p\mid u}\delta_p\bigg)\otimes\bigg(\bigotimes_{p\mid N}(\alpha_p-\chi_p(p)\beta_p)\bigg)\otimes\bigg(\bigotimes_{p\nmid uN}\alpha_p\bigg)\nonumber\\
    &=\frac{\mu(N)}{\varphi(N)}\sum_{M\mid N}\bigg(\prod_{p\mid M}\chi_p(p)\bigg)\mu(M)\tilde{B}_{M,\chi}\nonumber\\
    &\stackrel{\eqref{Diri2}}{=}\frac{\mu(N)}{\varphi(N)}\sum_{M\mid N}\xi(M)\mu(M)\tilde{B}_{M,\chi}\nonumber\\
    &=\frac{1}{\varphi(N)}\sum_{M\mid N}\xi(N/M)\mu(M)\tilde{B}_{N/M,\chi}.
\end{align}
Using M\"obius inversion, it follows that
\begin{equation}\label{Eisleveleq16 char}
    \tilde{B}_{N,\chi}=\xi(N)\sum_{M\mid N}\varphi(M)B_{M, \chi}.
\end{equation}
Note here that $\xi(N)=\xi(N)^{-1}$. We apply both sides of \eqref{Eisleveleq15 char} and \eqref{Eisleveleq16 char} to $f_{2,\xi}$, and get from \eqref{Eisleveleq12 char} that
\begin{equation}\label{Eisleveleq17-char}
 f_{2,N,\chi}=\frac{1}{\varphi(N)}\sum_{M\mid N}\xi(N/M)\mu(M)\tilde{f}_{2,M,\chi},\qquad \tilde f_{2,N,\chi}=\xi(N)\sum_{M\mid N}\varphi(M)f_{2,M, \chi}.
\end{equation}
Next we take the adelic Eisenstein series on both sides of these equations. Then the equality of the adelic Eisenstein series implies the equality of the classical Eisenstein series in \eqref{tildeENrelationpropeq1 char}.
\end{proof}
\subsection{Global representations generated by Eisenstein series with character}
In this section we continue to let $\xi$ be a primitive Dirichlet character with conductor $u>1$ and $\chi$ the corresponding character of $\Q^\times\backslash\A^\times$. Recall the classical Eisenstein series with character \eqref{Eisexampleseq6-char}. The following is a version of Corollary~\ref{Ektheoremcor} that takes the presence of the characters into account.
\begin{corollary}\label{Ektheoremcorchar}
 If $k\geq4$ is an even integer, then the $\H$-module $\pi$ generated by the automorphic form corresponding to $E_{k,\xi}$ is irreducible. We have $\pi\cong\bigotimes\pi_v$, with $\pi_\infty=\mathcal{D}_{k-1}^{\rm hol}$, the discrete series representation of lowest weight $k$, and $\pi_p=\chi_p|\cdot|_p^{(k-1)/2}\times\chi_p^{-1}|\cdot|_p^{(1-k)/2}$, an irreducible principal series representation, for all $p<\infty$.
\end{corollary}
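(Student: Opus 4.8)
The plan is to imitate the proof of Corollary~\ref{Ektheoremcor}, the only genuinely new point being the irreducibility of the local components at the ramified primes. Write $\Phi$ for the automorphic form corresponding to $E_{k,\xi}$, so that $\pi=\H\Phi$, and recall from Theorem~\ref{Ekchartheorem} that $\Phi=E(\cdot,f_{k,\chi})$ with $f_{k,\chi}$ the pure tensor in \eqref{Ekchartheoremeq0}. First I would exhibit an irreducible $\H$-module $W\subset V_{(k-1)/2,\chi}$ containing $f_{k,\chi}$, and then show that $f\mapsto E(\cdot,f)$ restricts to a nonzero $\H$-homomorphism on $W$; since $W$ is irreducible, any such map is injective, hence an isomorphism onto its image $\pi$, which simultaneously proves irreducibility and yields the factorization.

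To build $W$, I would analyze the local components of $f_{k,\chi}$. At the archimedean place, the discussion following \eqref{localrealeq2-char0} shows that the sequence \eqref{realexacteq1} persists for $V_{(k-1)/2,\chi_\infty}$ with $\mathcal{D}_{k-1}^{\rm hol}$ unchanged, so $f_{(k-1)/2,\chi_\infty}^{(k)}$ is the lowest weight vector generating the submodule $\mathcal{D}_{k-1}^{\rm hol}$. At a finite place I would argue that $V_{(k-1)/2,\chi_p}$ is already irreducible, so that its newform (for $p\mid u$) or spherical vector (for $p\nmid u$) generates the whole space. By the criterion recalled in Sect.~\ref{indrepsec}, $\chi_p|\cdot|_p^{(k-1)/2}\times\chi_p^{-1}|\cdot|_p^{(1-k)/2}$ is reducible precisely when $\chi_p^2=|\cdot|_p^{-(k-1)\pm1}$, i.e.\ $\chi_p^2\in\{|\cdot|_p^{-k},|\cdot|_p^{-k+2}\}$. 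The key observation is that $\chi$ arises from a Dirichlet character and is therefore of finite order; hence every $\chi_p$, and so every $\chi_p^2$, is unitary, whereas $|\cdot|_p^{-k}$ and $|\cdot|_p^{-k+2}$ are non-unitary for $k\geq4$ (as $k\neq0$ and $k-2\neq0$). Thus the reducibility condition can never hold, and $\pi_p:=V_{(k-1)/2,\chi_p}$ is irreducible for every $p<\infty$. Setting $W:=\mathcal{D}_{k-1}^{\rm hol}\otimes\big(\bigotimes_{p<\infty}V_{(k-1)/2,\chi_p}\big)$, each tensor factor is an irreducible module generated by the corresponding local component of $f_{k,\chi}$, so $W$ is irreducible and contains $f_{k,\chi}$.

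Finally, since $k\geq4$ gives ${\rm Re}((k-1)/2)>1/2$, the defining series \eqref{Fouriereq1} converges absolutely and $f\mapsto E(\cdot,f)$ is manifestly $\H$-intertwining on $W$; it is nonzero because $E(\cdot,f_{k,\chi})=\Phi$ corresponds to the nonzero modular form $E_{k,\xi}$ by Theorem~\ref{Ekchartheorem}. Irreducibility of $W$ then forces this map to be injective, whence $\pi=\H\Phi\cong W$, giving both the irreducibility and the asserted local factorization with $\pi_\infty=\mathcal{D}_{k-1}^{\rm hol}$ and $\pi_p=\chi_p|\cdot|_p^{(k-1)/2}\times\chi_p^{-1}|\cdot|_p^{(1-k)/2}$. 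I expect the only real obstacle to be the verification of local irreducibility at the ramified primes $p\mid u$; once the unitarity of $\chi_p$ is invoked this becomes immediate, and the remainder of the argument is a verbatim adaptation of Corollary~\ref{Ektheoremcor}.
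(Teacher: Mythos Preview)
Your proof is correct and follows exactly the approach the paper intends: the corollary is stated without proof as a direct analogue of Corollary~\ref{Ektheoremcor}, and your adaptation of that argument is the expected one. The only extra verification beyond the characterless case is the local irreducibility of $V_{(k-1)/2,\chi_p}$, and your unitarity argument (that $\chi_p^2$ has finite order while $|\cdot|_p^{-k}$ and $|\cdot|_p^{-k+2}$ do not for $k\geq4$) handles this cleanly.
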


For weight $2$ we have to be more careful because the summation \eqref{Fouriereq1} is no longer absolutely convergent. However the arguments of Section~\ref{Sec:globalrepEis}, in particular Lemma~\ref{HfinPhilemma} and Proposition~\ref{V12ppEinjectiveprop} (ii), remain valid. The situation is actually easier, because $E_{2,\xi}$ is holomorphic, hence the one-dimensional space of constant functions in Theorem~\ref{E2reptheorem} is no longer present. The upshot is that the automorphic form $\Phi_{2,\chi}=E(\cdot,f_{2,\chi})$ corresponding to $E_{2,\xi}$ generates the same global representation as the function $f_{2,\chi}$. Thus we obtain the following results, where we recall that $\mathcal{D}_p=\St_{\GL(2,\Q_p)}$.

\begin{theorem}\label{E2reptheorem char}
 Let $\Phi_{2,\chi}=E(\cdot,f_{2,\chi})$ be the automorphic form corresponding to $E_{2,\xi}$. Then the global representation $\H\Phi_{2,\chi}$ generated by $\Phi_{2,\chi}$ is
 \begin{equation}\label{E2reptheoremeq1 char}
  \H\Phi_{2,\chi}\cong\mathcal{D}_1^{\rm hol}\,\otimes\,\bigotimes_{\substack{p\mid u\\\chi_p^2\neq 1}}V_{1/2, \chi_p}\,\otimes\,\bigotimes_{\substack{p\mid u\\\chi_p^2=1}}\chi_p\mathcal{D}_p\,\otimes\,\bigotimes_{p\nmid u}V_{1/2, \chi_p}.
 \end{equation}
 as $\H$-modules.
\end{theorem}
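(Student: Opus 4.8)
The plan is to show that the analytically continued map $\mathbf{E}\colon f\mapsto E(\cdot,f)$ restricts to an isomorphism of $\H$-modules from $\H f_{2,\chi}$ onto $\H\Phi_{2,\chi}$, and then to read off $\H f_{2,\chi}$ as the right hand side of \eqref{E2reptheoremeq1 char}. Since $f_{2,\chi}$ is the pure tensor of Theorem~\ref{Ekchartheorem}, we have $\H f_{2,\chi}=\bigotimes_v\H_v f_{2,\chi,v}$, and I would first identify each local factor. At $v=\infty$ the lowest weight vector $f_{1/2,\chi_\infty}^{(2)}$ generates the discrete series $\mathcal{D}_1^{\rm hol}$. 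At $p\mid u$ with $\chi_p^2\neq1$ the representation $V_{1/2,\chi_p}$ is irreducible, hence generated by its newform. At $p\mid u$ with $\chi_p^2=1$ the newform $f_{1/2,\chi_p}^{\rm new}$ lies in the submodule $\chi_p\St_{\GL(2,\Q_p)}=\chi_p\mathcal{D}_p$ and generates it. At $p\nmid u$ the spherical vector is not contained in the Steinberg submodule (when the latter exists), so it generates all of $V_{1/2,\chi_p}$. This already yields the tensor product in \eqref{E2reptheoremeq1 char}, so the real content is the isomorphism statement.

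The main obstacle is the intertwining property of $\mathbf{E}$ on $\H f_{2,\chi}$; this is exactly where the character makes the situation easier than in Theorem~\ref{E2reptheorem}. Every vector of $\H f_{2,\chi}$ has archimedean weight $k$ with $|k|\geq2$, so by \eqref{localrealeq6} and the discussion following \eqref{globaleq-5} the global intertwining operator $A_{1/2,\chi}$ vanishes on all of $\H f_{2,\chi}$ (when $\chi^2\neq1$ because $L(2s,\chi^2)$ is holomorphic at $s=1/2$, and when $\chi^2=1$ because the simple pole of $L(2s,\chi^2)=\zeta(2s)$ is absorbed by the zeros coming from the archimedean factor and from the local operator at a place $p\mid u$). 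Hence the analytically continued Fourier expansion, formed as in \eqref{intpropseceq4}, carries no intertwining-operator term for $f\in\H f_{2,\chi}$, and the proof of Lemma~\ref{intpropprop} applies \emph{without} the exceptional cases at $k=\pm2$. The one delicate point---$R$ applied to a weight-$(-2)$ vector $f$, where $Rf=0$ in $\mathcal{D}_1^{\rm hol}$, and symmetrically $L$ on a weight-$2$ vector---I would handle by writing $Rf_s=(s-\tfrac12)g_s$ for the weight-$0$ flat section $g_s$ (using \eqref{RHLVslemmaeq1}) and computing $R\,\mathbf{E}(f)=\lim_{s\to1/2}(s-\tfrac12)E(\cdot,g_s)$. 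Since the finite components of $g_s$ agree with those of $f$ and thus lie in the kernel of $A_{1/2,\chi_p}$ at each $p\mid u$ with $\chi_p^2=1$, the function $A_{s,\chi}g_s$ is holomorphic at $s=\tfrac12$; hence $E(\cdot,g_s)$ has no pole there and the limit is $0$, matching $\mathbf{E}(Rf)=0$. In particular $L\Phi_{2,\chi}=\mathbf{E}(Lf_{2,\chi})=0$, so, in contrast to Theorem~\ref{E2reptheorem}, no constant automorphic form appears in $\H\Phi_{2,\chi}$.

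Finally, injectivity of $\mathbf{E}$ on $\H f_{2,\chi}$ follows from the arguments of Lemma~\ref{HfinPhilemma} and Proposition~\ref{V12ppEinjectiveprop}(ii), which carry over after replacing $V_{1/2,p}$, $B_N$ and $E_{2,N}$ by $V_{1/2,\chi_p}$, $B_{N,\chi}$ (see \eqref{Eisleveleq11 char}) and $E_{2,N,\xi}$ (Theorem~\ref{tildeENtheorem-char}). Working weight by weight, a nonzero $\H_{\rm fin}$-submodule of the kernel would, by Lemma~\ref{algebraactionlemma3} applied to the reducible local factors $V_{1/2,\chi_p}$ at primes $p\nmid u$ with $\chi_p^2=1$, contain $\bigl(\bigotimes_{p\mid N}\chi_p\mathcal{D}_p\bigr)\otimes(\text{the remaining factors})$ for some squarefree $N>1$ coprime to $u$; applying $B_{N,\chi}$ and using \eqref{Eisleveleq12 char} would then force $R^\ell B_{N,\chi}\Phi_{2,\chi}=0$, contradicting the fact that $B_{N,\chi}\Phi_{2,\chi}$ is the automorphic form attached to the nonzero holomorphic form $E_{2,N,\xi}$, which is not annihilated by $R^\ell$ in view of Proposition~\ref{diffopHproposition}. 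Combining intertwining with injectivity, $\mathbf{E}(\H f_{2,\chi})=\H\,\mathbf{E}(f_{2,\chi})=\H\Phi_{2,\chi}$ and $\mathbf{E}$ is an $\H$-module isomorphism $\H f_{2,\chi}\to\H\Phi_{2,\chi}$, whence the local computation of the first paragraph identifies $\H\Phi_{2,\chi}$ with the right hand side of \eqref{E2reptheoremeq1 char}.
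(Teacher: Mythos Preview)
Your proof is correct and follows the same route as the paper, which is extremely terse: it simply states that the arguments of Lemma~\ref{HfinPhilemma} and Proposition~\ref{V12ppEinjectiveprop}(ii) carry over, that the situation is easier because $E_{2,\xi}$ is holomorphic so no constant automorphic form appears, and that consequently $\Phi_{2,\chi}$ generates the same $\H$-module as $f_{2,\chi}$. The one presentational difference is that you derive $L\Phi_{2,\chi}=0$ (and the analogous $R$-vanishing on weight $-2$) from the vanishing of the analytically continued intertwining term via your limit argument $R\,\mathbf{E}(f)=\lim_{s\to1/2}(s-\tfrac12)E(\cdot,g_s)$, whereas the paper simply appeals to the holomorphy of $E_{2,\xi}$ established in Theorem~\ref{Ekchartheorem} together with Proposition~\ref{diffopHproposition}; both arguments are valid and lead to the same conclusion.
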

\begin{theorem}\label{E2Nreptheorem char}
 For a square-free positive integer $N$ such that $\chi_p^2=1$ for $p\mid N$, let $\Phi_{2,N, \chi}=E(\cdot,f_{2,N, \chi})$ be the automorphic form corresponding to $E_{2,N, \xi}$. Then
 \begin{equation}\label{E2Nreptheoremeq1 char}
  \H\Phi_{2,N, \chi}\cong\mathcal{D}_1^{\rm hol}\,\otimes\,\bigotimes_{\substack{p\mid u\\\chi_p^2\neq 1}}V_{1/2, \chi_p}\,\otimes\,\bigotimes_{\substack{p\mid u\\\chi_p^2=1}}\chi_p\mathcal{D}_p\,\otimes\,\bigotimes_{p\mid N}\chi_p\mathcal{D}_p\,\otimes\,\bigotimes_{p\nmid uN}V_{1/2, \chi_p}.
 \end{equation}
\end{theorem}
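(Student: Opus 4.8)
The plan is to realize $\H\Phi_{2,N,\chi}$ as the image of $\H f_{2,N,\chi}$ under the map $\mathbf{E}\colon f\mapsto E(\cdot,f)$, to show that this map is an injective homomorphism of $\H$-modules, so that $\H\Phi_{2,N,\chi}\cong\H f_{2,N,\chi}$, and finally to compute the right-hand side of \eqref{E2Nreptheoremeq1 char} by identifying the local submodules generated by the components of the pure tensor $f_{2,N,\chi}$. The decisive simplification over the character-free case treated in Theorem~\ref{E2Nreptheorem} is that the global intertwining operator $A_{s,\chi}$ vanishes at $s=1/2$ on the relevant flat sections, as recorded in the discussion following \eqref{globaleq-5}. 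Because the archimedean component of $f_{2,N,\chi}$ is the weight-$2$ vector $f_{1/2,\chi_\infty}^{(2)}$, which generates $\mathcal{D}_1^{\rm hol}$, and because $\mathcal{D}_1^{\rm hol}$ contains no weight-$0$ vector, every $\H$-translate of $f_{2,N,\chi}$ has archimedean weight $k\neq0$, so the limit $\lim_{s\to1/2}(A_{s,\chi}f_s)(g)$ is identically zero for all sections in play.

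First I would record that, by this vanishing together with Lemma~\ref{Fourierlemma2} (whose analytic-continuation argument is unchanged in the presence of $\chi$), the analytically continued Eisenstein series has the Fourier expansion
\[
 E(g,f)=f(g)+\sum_{\substack{\alpha\in M^{-1}\Z\\\alpha\neq0}}(W_{1/2,\chi}^\alpha f)(g),
\]
with no intertwining-operator term. Both remaining terms are manifestly equivariant for the full global Hecke algebra: the first because a translate of the section is the translate of its value, the second because the Whittaker functional \eqref{globalWeq1} satisfies $(W_{1/2,\chi}^\alpha f)(gh)=(W_{1/2,\chi}^\alpha f^h)(g)$ together with the analogous identities for the Lie-algebra and $\varepsilon_-$ actions. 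Consequently $\mathbf{E}$ is a genuine $\H$-homomorphism on $\H f_{2,N,\chi}$; the archimedean obstruction of Lemma~\ref{intpropprop} (items \eqref{intproppropeq3}, \eqref{intproppropeq4}) is simply absent, since it was produced entirely by the now-vanishing $A_{1/2}$ term. Injectivity of $\mathbf{E}$ follows from the character analogues of Lemma~\ref{HfinPhilemma} and Proposition~\ref{V12ppEinjectiveprop}~(ii), which remain valid and are in fact easier here, because $\Phi_{2,N,\chi}$ is holomorphic and the constant-function subspace of Theorem~\ref{E2reptheorem} does not occur; the required non-vanishing of $R^\ell B_{N,\chi}\Phi_{2,\chi}$ is visible from the explicit Fourier expansion in Theorem~\ref{tildeENtheorem-char}.

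It then remains to identify $\H f_{2,N,\chi}=\bigotimes_v(\H_v f_v)$ factor by factor. At the archimedean place $\H_\infty f_{1/2,\chi_\infty}^{(2)}=\mathcal{D}_1^{\rm hol}$. At a prime $p\mid u$ with $\chi_p^2\neq1$ the local representation $V_{1/2,\chi_p}$ is irreducible, so its newform generates all of $V_{1/2,\chi_p}$; at a prime $p\mid u$ with $\chi_p^2=1$ the newform lies in the submodule $\chi_p\mathcal{D}_p$ by the description following \eqref{fnewdefeq}, and generates it. At a prime $p\mid N$ the character $\chi_p$ is unramified quadratic and $f_{1/2,\chi_p}^{\rm St}$ lies in the kernel of $A_{1/2,\chi_p}$ by \eqref{localpadiceq7 char}, hence generates $\chi_p\mathcal{D}_p$; and at $p\nmid uN$ the spherical vector generates $V_{1/2,\chi_p}$. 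Assembling these factors yields exactly \eqref{E2Nreptheoremeq1 char}. The main obstacle is the second paragraph: one must be confident that the vanishing of $A_{s,\chi}$ at $s=1/2$ genuinely removes the intertwining defect, i.e.\ that passing to the limit $s\to1/2$ commutes with the Lie-algebra and $G(\A_{\rm fin})$ actions on the Whittaker term — a point that is delicate precisely because, in the level-free weight-$2$ theory, the entire defect lived in this limiting process.
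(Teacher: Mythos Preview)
Your proposal is correct and follows essentially the same approach as the paper: invoke the vanishing of the global intertwining operator at $s=1/2$ (discussed after \eqref{globaleq-5}) to conclude that $\mathbf{E}$ is a genuine $\H$-homomorphism, appeal to the character analogues of Lemma~\ref{HfinPhilemma} and Proposition~\ref{V12ppEinjectiveprop}~(ii) for injectivity, and then read off $\H f_{2,N,\chi}$ as a tensor product of local submodules. The paper in fact gives less detail than you do, deriving both Theorems~\ref{E2reptheorem char} and~\ref{E2Nreptheorem char} from a single preparatory paragraph; your explicit factor-by-factor identification of the local components and your remark that the archimedean defect of Lemma~\ref{intpropprop} disappears with the $A_{1/2}$-term are accurate elaborations of what the paper leaves implicit.
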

In terms of Dirichlet characters, the condition $\chi_p^2=1$ for $p\mid u$ can be detected as follows. Let $\eta$ be the primitive Dirichlet character corresponding to $\xi^2$, and let $u_1\mid u$ be the conductor of $\eta$. Then $\chi^2$ is the character of $\Q^\times\backslash\A^\times$ corresponding to $\eta$. Hence $\chi_p^2=1$ if and only if $p\nmid u_1$ and $\eta(p)=1$.

\bibliographystyle{plain}
\bibliography{Classical_and_adelic_Eisenstein_series.bib}

\end{document}